			\definecolor{dark-gray}{gray}{0.1}
			\definecolor{dark-blue}{RGB}{0,0,80}
			\definecolor{light-gray}{gray}{0.9}
			\theoremstyle{definition}
				\newtheorem{definition}{Definition}[section]
				\newtheorem*{definition*}{Definition}
			\theoremstyle{plain}
				\newaliascnt{theorem}{definition} 								
				\newtheorem{theorem}[theorem]{Theorem}						
				\newtheorem*{theorem*}{Theorem}										
				\newaliascnt{proposition}{definition}
				\newtheorem{proposition}[proposition]{Proposition}
				\newaliascnt{corollary}{definition}
				\newtheorem{corollary}[corollary]{Corollary}
				\newtheorem*{corollary*}{Corollary}
				\newaliascnt{lemma}{definition}
				\newtheorem{lemma}[lemma]{Lemma}
				\newtheorem*{lemma*}{Lemma}
				\newtheorem{bigtheorem}{Theorem}
			\theoremstyle{remark}
				\newtheorem*{remark}{Remark}
				\newtheorem*{example}{Example}
\newcommand{\thalf}{\tfrac{1}{2}}
\newcommand{\del}{\partial}
\DeclareMathOperator{\Tr}{Tr}
\newcommand{\abs}[1]{\left| #1 \right|}
\newcommand{\norm}[1]{\left\lVert #1 \right\rVert}
\newcommand{\set}[1]{\left\{\ #1 \ \right\} } 							
\newcommand{\suchthat}{\mathrel{}\middle|\mathrel{}}		
\newcommand{\spans}[1]{\operatorname{span}\left\{ #1 \right\}}		
\newcommand{\hodge}{\star}
\newcommand\Restrict[2]{{\left. \kern-\nulldelimiterspace #1 \right\vert_{#2} }}								 
\newcommand\restrict[1]{\raisebox{-.2ex}{\ensuremath \vert}_{#1}}  
\newcommand{\N}{\mathbb{N}}			
\newcommand{\R}{\mathbb{R}} 		
\newcommand{\C}{\mathbb{C}}			
\DeclareMathOperator{\ad}{ad}
\DeclareMathOperator{\Hom}{Hom}
\newcommand{\HW}[1][]{{\mathop{\mathbf{HW}}{\hspace{-0.2em}}_{#1\, \/}}}
\newcommand{\KW}[1][]{{\mathop{\mathbf{KW}}{\hspace{-0.2em}}_{#1\, \/}}}
\newcommand{\VW}[1][]{{\mathop{\mathbf{VW}}{\hspace{-0.2em}}_{#1\, \/}}}
\newcommand{\dHW}[1][]{{\mathop{\mathbf{dHW}}{\hspace{-0.2em}}_{#1\, \/}}}
\let\Re\relax
\DeclareMathOperator{\Re}{Re}
\let\savemathfrak\mathfrak											
\let\mathfrak\savemathfrak											
\DeclareMathAlphabet\mathcal{OMS}{cmsy}{m}{n}		
\title{The Decoupled Haydys-Witten Equations\\ and a Weitzenböck Formula}
\date{\today}
\begin{document}

\makeatletter
\renewcommand*{\thefootnote}{\fnsymbol{footnote}}

\thispagestyle{empty}

{

\setstretch{1.3}
\Large\bfseries\scshape
\@title

}

\vspace{1em}

Michael Bleher\footnotemark[2] \\
{\small\itshape
Institute for Mathematics, Heidelberg University, Im Neuenheimer Feld 205, Heidelberg, Germany.
}

\footnotetext[2]{mbleher@mathi.uni-heidelberg.de}

\renewcommand*{\thefootnote}{\arabic{footnote}}
\makeatother

\paragraph{Abstract.}
The Haydys-Witten equations are partial differential equations on five-dimensional Riemannian manifolds that are equipped with a non-vanishing vector field $v$.
Conjecturally, their solutions determine the Floer differential in a gauge-theoretic approach to Khovanov homology.
This article introduces a certain decoupled version of the Haydys-Witten equations, a specialization of the Haydys-Witten equations that exhibits a Hermitian Yang-Mills structure.
These equations exist whenever the vector bundle defined by the orthogonal complement of $v$ admits an almost Hermitian structure.
We investigate the relation between the full Haydys-Witten equations and their decoupled version on manifolds with poly-cylindrical ends and boundaries, and find conditions under which the Haydys-Witten equations reduce to the decoupled equations.
This relies on a Weitzenböck-like formula that shows that the difference between the full Haydys-Witten equations and the decoupled equations is governed by the asymptotic behaviour of solutions near boundaries and non-compact ends.
Regarding the analysis near boundaries, we provide a detailed analysis of the polyhomogeneous expansion of Haydys-Witten solutions with twisted Nahm pole boundary conditions, generalizing work of Siqi He in the untwisted case. 
The corresponding analysis at non-compact ends relies on a vanishing theorem by Nagy-Oliveira that was recently generalized by the author.

\section{Introduction}
Let $(M^5, g)$ be a five-manifold with poly-cylindrical ends, where ends may be located at either finite or infinite geodesic distance.
Assume $M^5$ admits a non-vanishing unit vector field $v\/$ and that the subbundle $\ker g(v,\cdot) \subset TM^5 $ admits an almost Hermitian structure.
This means that there is an almost complex structure $J:\ker g(v,\cdot) \to \ker g(v,\cdot)$ that is compatible with the metric, i.e. $g(J\cdot, J\cdot) = g(\cdot, \cdot)$.

In this article we investigate the Haydys-Witten equations on manifolds $(M^5, g, v, J)$.
The existence of $J$ provides a specialization of the equations that we call \emph{decoupled} Haydys-Witten equations.
Crucially, the decoupled version of the equations exhibits a Hermitian Yang-Mills structure, which provides additional tools in solving the equations.
This structure becomes most apparent in the $4\mathcal{D}$-formulation of the Haydys-Witten equations, an extension of Witten's $3\mathcal{D}$-formulation of the extended Bogomolny equations (EBE)~\cite{Witten2011}, which is introduced below and used throughout the introduction.
The main contribution of this article consists in working out conditions under which the Haydys-Witten equations reduce to the decoupled version.

Curiously, in the context of Witten's gauge theoretic approach to homological knot invariants~\cite{Witten2011}, manifolds are generally equipped with the additional structure $(g,v,J)$.
In that situation one considers the Haydys-Witten equations on five-manifolds of the form $M^5= \R_s \times X^3 \times \R_y^+$, equipped with a product metric $g$, and sets $v = \del_y$.
The subbundle $\ker g(v,\cdot)$ is then simply the tangent space of $\R_s \times X^3$, which always admits an almost complex structure, as it is an open and orientable four-manifold.
Conjecturally, when $X^3=\R^3$ or $S^3$ and in the presence of a magnetically charged knot $K \subset X^3$, the homology groups obtained from $\theta$-Kapustin-Witten solutions (stationary Haydys-Witten solutions) at $s\to\pm \infty$ modulo Haydys-Witten instantons reproduces a knot invariant known as Khovanov homology.
Hence, the results presented here may offer a fresh perspective on the gauge theoretic approach to Khovanov homology.

In the following we briefly introduce the $4\mathcal{D}$-formulation of the Haydys-Witten equations and the decoupled version of the equations, see \autoref{sec:vanishings-weitzenböck} for a more detailed, global discussion.
For this, let $G$ denote a compact Lie group, $G_{\mathbb{C}}$ its complexification, $E\to M^5$ a $G$-principal bundle, and $E_{\mathbb{C}}$ the associated $G_{\mathbb{C}}$-principal bundle.
Furthermore, let $\mathcal{A}(E)$ denote gauge connections and write $\Omega^2_{v,+}(M^5)$ for Haydys' self-dual two-forms with respect to $v$~\cite{Haydys2010}.

Given a pair $(A,B) \in \mathcal{A}(E)\times \Omega^2_{v,+}(M^5,\ad E)$ and an almost complex structure $J$, one can locally define four differential operators $\mathcal{D}_\mu$ that act on sections of $\ad E_{\mathbb{C}}$.
To that end, consider normal coordinates $(x^i,y)_{i=0,1,2,3}$ near a point $p$, chosen in such a way that $v = \del_y$ and that $J$ takes the canonical form with respect to the coordinate vector fields $\del_i$ at $p$.
In these coordinates, $B = \sum_{a,b,c = 1}^3 \phi_a (dx^0\wedge dx^a + \thalf \epsilon_{abc} dx^b\wedge dx^c)$.
The four differential operators are defined by
\begin{align}
	\mathcal{D}_0 &= \nabla^A_0 + i\nabla^A_1 &
	\mathcal{D}_1 &= \nabla^A_2 + i\nabla^A_3 \\
	\mathcal{D}_2 &= \nabla^A_y - i [\phi_1, \cdot] &
	\mathcal{D}_3 &= [\phi_2, \cdot] - i [\phi_3,\cdot]
\end{align}
There is a complex conjugation, induced from $\ad E_{\mathbb{C}}$, that we denote by $\overline{\mathcal{D}_\mu}$.
Furthermore, $G_{\mathbb{C}}$-valued gauge transformations act on the operators by conjugation, i.e. $\mathcal{D}_\mu \mapsto g \mathcal{D}_\mu g^{-1}$.

In this formulation, the Haydys-Witten equations $\HW[v](A,B) = 0$ are given by
\begin{align}
	[\overline{\mathcal{D}_0} , \overline{\mathcal{D}_i} ] - \thalf \varepsilon_{ijk} [\mathcal{D}_j, \mathcal{D}_k] &= 0 \ , \qquad i = 1,\ldots,3\ , \label{eq:vanishings-HW-4Ds-gauged-Nahm}\\
	\sum_{\mu = 0}^3 [\mathcal{D}_\mu, \overline{\mathcal{D}_\mu}] &= 0\ . \label{eq:vanishings-HW-4Ds-moment-map-condition}
\end{align}
A typical approach in solving equations of this type is to utilize their symmetry properties.
Since there is an action by complex gauge transformation, one natural idea is to use a Donaldson-Uhlenbeck-Yau type approach, where one first extracts some underlying holomorphic data from $G_{\C}$-invariant parts of the equations, and subsequently hopes to find a complex gauge transformation that ensures also the remaining equations are satisfied.

Unfortunately, although the Haydys-Witten equations are invariant under $G$-valued gauge transformations and the action lifts naturally to $G_{\C}$, neither~\eqref{eq:vanishings-HW-4Ds-gauged-Nahm} nor~\eqref{eq:vanishings-HW-4Ds-moment-map-condition} are invariant under $G_{\C}$-valued gauge transformations.
There is, however, a subset of solutions for which the three equations in~\eqref{eq:vanishings-HW-4Ds-gauged-Nahm} decompose into their $G_{\C}$-invariant parts.
This is given by solutions that satisfy the following equations:
\begin{align}\label{eq:vanishings-dHW-4D}
\begin{split}
	[ \mathcal{D}_\mu, \mathcal{D}_\nu ] &= 0 \ , \qquad \mu,\nu = 0, \ldots, 3\ , \\
	\sum_{\mu = 0}^3 [\mathcal{D}_\mu, \overline{\mathcal{D}_\mu}] &= 0\ .
\end{split}
\end{align}
We refer to~\eqref{eq:vanishings-dHW-4D} as \emph{decoupled Haydys-Witten equations} and denote them by ${\dHW[v, J](A,B)=0}$.
A global version of these equations is provided in \autoref{sec:vanishings-weitzenböck}.
The commutativity equations are $G_{\mathbb{C}}$-invariant and can be interpreted as a complex moment map condition in a hyper-Kähler reduction, while the remaining equation provides the real moment map condition.
Put differently, the decoupled equations exhibit a Hermitian Yang-Mills structure, such that a Donaldson-Uhlenbeck-Yau type approach and other powerful tools become available.

Clearly, whenever $\dHW[v,J](A,B) = 0$, then $\HW[v](A,B) = 0$.
Here, we prove that in certain situations the reverse is true, such that the Haydys-Witten equations reduce to the decoupled Haydys-Witten equations.
This is controlled, on the one hand by the (asymptotic) geometry, and on the other hand by the asymptotic behaviour of the fields $(A,B)$, at boundaries and cylindrical ends of $M^5$.
Crucially, we need to assume that
\begin{enumerate*}[label=(\alph*)]
\item boundaries of $M^5$ are flat and $(A,B)$ satisfies Nahm pole boundary conditions with knot singularities, and
\item non-compact ends of $M^5$ are asymptotically locally Euclidean (ALE) or flat (ALF) gravitational instantons and $(A,B)$ approaches a finite energy solution of the $\theta$-Kapustin-Witten equations.
\end{enumerate*}
For the sake of brevity, we omit further technical conditions for now and instead refer to assumptions (A1)~-~(A4) in \autoref{sec:vanishings-proof-main-result} for details.
With that understood, our main result is as follows (this is \autoref{thm:vanishings-decoupling}).
\begin{bigtheorem}\label{bigthm:decoupling}
Let $G=SU(2)$, $M^5$ a manifold with poly-cylindrical ends, $v$ a non-vanishing vector field that approaches ends at a constant angle, and $J$ an almost Hermitian structure on $\ker g(v,\cdot)$.
Assume $\HW[v](A,B)=0$ and that assumptions \textnormal{(A1)}~-~\textnormal{(A4)} are satisfied, then $\dHW[v,J](A,B)=0$.
\end{bigtheorem}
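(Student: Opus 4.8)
The plan is to prove \autoref{bigthm:decoupling} by deriving a \emph{Weitzenböck-type identity} that expresses the pointwise failure of the decoupled equations as a sum of a manifestly non-negative bulk term and a total divergence. Concretely, set $\rho := \HW[v](A,B) - \dHW[v,J](A,B)$, the tensor measuring the discrepancy between the two systems; schematically $\rho$ is built from the commutators $[\mathcal{D}_\mu,\mathcal{D}_\nu]$ and their conjugates that appear in~\eqref{eq:vanishings-dHW-4D} but are allowed to be nonzero under~\eqref{eq:vanishings-HW-4Ds-gauged-Nahm}--\eqref{eq:vanishings-HW-4Ds-moment-map-condition}. First I would establish, for any solution of $\HW[v](A,B)=0$, an identity of the form
\begin{equation}
	\int_{M^5} \abs{\rho}^2 \, \vol_g \;=\; \int_{M^5} \big( \text{curvature/torsion terms} \big) \, \vol_g \;+\; \int_{M^5} \div\!\big(\, W(A,B) \,\big)\, \vol_g \ ,
\end{equation}
where $W$ is a quadratic expression in the fields whose flux through the boundary and the cylindrical ends must be controlled. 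The curvature/torsion terms are where the geometric hypotheses enter: on the compact part they should vanish or have a definite sign thanks to the almost Hermitian structure $J$ and the flatness assumptions, and on the ends the ALE/ALF condition (gravitational instanton, hence Ricci-flat) kills the Ricci contributions. This is the step I would present first, since it is essentially algebra plus integration by parts once the right combination $\rho$ is identified.

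The second block of the argument is the boundary analysis. Here I would invoke the polyhomogeneous expansion of Haydys-Witten solutions with twisted Nahm pole boundary conditions (the generalization of Siqi He's work advertised in the abstract) to show that the boundary flux $\int_{\del M^5} \iota_n W(A,B)$ vanishes. The point is that near a flat boundary component the leading Nahm-pole term of $B$ is a fixed model solution which itself satisfies the decoupled equations, so $\rho$ decays fast enough that the divergence term contributes nothing in the limit; the knot singularities are codimension-two and handled by a standard cutoff/removable-singularity estimate using the known local model near the knot $K$. Symmetrically, at the non-compact ends I would use that $(A,B)$ approaches a finite-energy $\theta$-Kapustin-Witten solution, combined with the vanishing theorem of Nagy--Oliveira (as generalized by the author): finite-energy KW solutions on ALE/ALF instantons are known to be (complexified-gauge-equivalent to) decoupled/abelian-type configurations, so again $\rho$ and the flux of $W$ vanish in the cylindrical limit. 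The constant-angle hypothesis on $v$ near the ends is what makes the Haydys self-duality condition $\Omega^2_{v,+}$ line up with the $4$-dimensional anti-self-duality on the cross-section, so that these $4$D vanishing results apply verbatim.

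With the boundary and end contributions shown to vanish, the Weitzenböck identity collapses to $\int_{M^5}\abs{\rho}^2\,\vol_g = \int_{M^5}(\text{sign-definite or zero terms})$, and a positivity/sign argument forces $\rho \equiv 0$, which is exactly $\dHW[v,J](A,B)=0$. I expect the \textbf{main obstacle} to be the boundary term: making the polyhomogeneous expansion precise enough — identifying which exponents actually occur in the twisted case, checking that no "resonant" log terms obstruct the decay of $\rho$, and verifying uniformity of the expansion up to (and transverse to) the knot $K$ — is considerably more delicate than the interior Weitzenböck computation, and is really the technical heart flagged by the reference to generalizing He's analysis. A secondary difficulty is bookkeeping: the identification of the correct quadratic tensors $\rho$ and $W$ so that the cross-terms cancel and the residual bulk term has a definite sign depends on using the $SU(2)$ structure (e.g. that $\ad E_{\mathbb C}=\mathfrak{sl}(2,\C)$ with its specific quadratic identities), which is presumably why the theorem is stated for $G=SU(2)$ rather than a general compact group; I would isolate those identities as a lemma before assembling the global estimate.
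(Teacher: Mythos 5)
Your overall strategy is the paper's: a Weitzenböck identity plus vanishing of the boundary and end fluxes via the twisted Nahm pole polyhomogeneous expansion and the Nagy--Oliveira-type vanishing theorem. Two corrections, one structural and one a genuine gap. Structurally, the identity that actually holds (\autoref{lem:vanishings-weitzenböck}) is exact: $\norm{\HW[v](A,B)}^2 = \norm{\dHW[v,J](A,B)}^2 + d\chi$ with the explicit $\chi = -2\Tr\left(F_A \wedge J^- B\right) - 2\Tr\left(\delta_A^+ J^+ B \wedge J^- B \wedge \eta\right)$, and there are \emph{no} bulk curvature or torsion terms whose sign would need to be controlled. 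The flatness hypothesis ($F_\omega=0$ at Nahm pole boundaries) and the ALE/ALF condition do not enter as bulk Ricci contributions; they enter only through the boundary analysis (the indicial computation of \autoref{lem:vanishings-expansion-of-nahm-pole-solutions}, and the vanishing theorem applied to the limiting Kapustin-Witten solution at the ends). If your identity genuinely produced curvature terms of undetermined sign, the final positivity argument would not close, so you would in any case have to prove those terms are absent -- which is exactly the content of the lemma you are assuming schematically. Relatedly, $G=SU(2)$ is not needed for the Weitzenböck algebra, which holds for any compact $G$; it is needed because the Kapustin-Witten vanishing theorems invoked at the ends are proved for $SU(2)$.

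The genuine gap is the fourth class of ends, assumption (A4): the ends $\del_{\mathrm{NP-KW}} M^5$ at which $(A,B)$ limits to a $\theta$-Kapustin-Witten solution carrying a Nahm pole at an adjacent corner. Such a limit has \emph{infinite} Kapustin-Witten energy, so the finite-energy vanishing theorem you invoke (your only tool at non-compact ends) does not apply there, and your argument says nothing about this flux. The paper needs a separate mechanism: for incidence angle $\theta=\pi/2$ the limiting flux $\lim_{s\to 0} i_s^\ast\chi$ is itself an exact form $d\omega$ on the cross-section $W^4$ (\autoref{prop:vanishings-chi-npkw-asymptotics}), and $\int_{W^4} d\omega$ is then killed by Yau's theorem on exact forms whose $L^1$-norm over balls grows sublinearly, using the Nahm pole expansion near the corner together with the asymptotic condition that $A+iB$ tends to a flat $G_{\mathbb{C}}$ connection with $J^-\sigma(B,B)=0$. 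Without this step, or hypotheses that remove such ends, the boundary term does not vanish and the conclusion fails. A secondary overstatement: near the knot boundary the twisted model solutions are known only implicitly, which is why the required decay of $\chi$ is \emph{assumed} in (A2) rather than derived from a local model plus a cutoff as you suggest; only the untwisted case $\theta=\pi/2$ is currently under control.
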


The proof of \autoref{bigthm:decoupling} is based on a Weitzenböck formula of the form
\begin{align} \label{eq:vanishings-intro-weitzenböck}
	\int_{M^5} \norm{\HW[v](A,B)}^2 = \int_{M^5} \norm{\dHW[v,J](A,B)}^2 + \int_{M^5} d \chi
\end{align}
From this it's clear that whenever $\HW[v](A,B) = 0$ and any boundary contributions (including contributions from non-compact ends) in $\int_{M^5} d\chi$ vanish, then also $\dHW[v,J](A,B) = 0$.
The key insights of this article lie in determining conditions under which all boundary contributions vanish, if one imposes Nahm pole boundary conditions at finite distances and asymptotically stationary solutions at infinity.
Due to the two different flavours of boundary conditions, we rely on two distinct facts:
Elliptic regularity of the Nahm pole boundary conditions, on the one hand, and a vanishing theorem for solutions of $\theta$-Kapustin-Witten solutions on ALE and ALF spaces, on the other.
Let us shortly explain how these two facts appear in the proof.

First, the Nahm pole boundary conditions for $(A,B)$ state, in particular, that at order $y^{-1}$ the fields satisfy the extended Bogomolny equations (EBE), which in the $4\mathcal{D}$ formalism correspond to
\begin{align}
	\mathcal{D}_0 &= 0\ , &
	[ \mathcal{D}_i, \mathcal{D}_j ] &= 0 \qquad i,j = 1, \ldots, 3\ , &
	\sum_{i = 1}^3 [\mathcal{D}_i, \overline{\mathcal{D}_i}] &= 0\ .	
\end{align}
This means that the leading order terms are already solutions of the decoupled Haydys-Witten equations.
As will be discussed in detail below, elliptic regularity of the Haydys-Witten equations states that deviations from the EBE-solutions can only appear at order $y^{1+\delta}$, for some $\delta>0$~\cite{Mazzeo2013a, Mazzeo2017, He2018a}.
This, in turn, implies that $\chi$ only involves terms of order $y^\delta$.
As a consequence, any contributions to $\int d \chi$ from boundaries with Nahm pole boundary conditions vanish.

We expect that \autoref{bigthm:decoupling} is also generally true in the presence of knot singularities.
Unfortunately, the twisted knot singularity solutions are only known implicitly (by a continuation argument)~\cite{Dimakis2022}, such that extracting information from elliptic regularity is difficult.
Although we currently have no proof for this extension of the result, we include a discussion of the relevant boundary conditions and state a necessary condition that is known to be satisfied in the untwisted case.

Second, and perhaps more surprisingly, the boundary terms also vanish at asymptotic ends when the fields approach stationary solutions of the Haydys-Witten equations, or equivalently solutions of the $\theta$-Kapustin-Witten equations, with finite energy.
This relies on a vanishing theorem that was originally conjectured by Nagy and Oliveira~\cite{Nagy2021} and for which a proof appeared recently in~\cite{Bleher2023a}.
The vanishing theorem states that for a finite energy solution of the $\theta$-Kapustin-Witten equations on an ALE or ALF gravitational instanton $A$ is flat, $\phi$ is $\nabla^A$-parallel, and $[\phi \wedge \phi] = 0$.

This article is structured as follows.
In \autoref{sec:vanishings-weitzenböck} we summarize Haydys' geometry and the Haydys-Witten equations, define the decoupled Haydys-Witten equations, and establish the promised Weitzenböck formula.
In \autoref{sec:vanishings-setting} we further specify the basic geometric setting that is necessary to specify boundary conditions and which is used in evaluating the integral of the exact term in the Weitzenböck formula.
A key step in this article is an investigation of the polyhomogeneous expansion of a twisted Nahm pole solution of the Haydys-Witten equations, which is presented in \autoref{sec:vanishings-polyhomogeneous-expansion-of-nahm-pole-solutions}.
Subsequently, in \autoref{sec:vanishings-asymptotics-of-chi}, we determine the asymptotic behaviour of $\chi$ at the various boundaries and ends, filling in the details of the remaining boundary conditions as we go.
Finally, in \autoref{sec:vanishings-proof-main-result}, we bring everything together and show that in certain situations the boundary term in~\eqref{eq:vanishings-intro-weitzenböck} vanishes, which immediately implies \autoref{bigthm:decoupling}.
\paragraph{Acknowledgements}
This work is funded by the Deutsche Forschungsgemeinschaft (DFG, German Research Foundation) under Germany's Excellence Strategy EXC 2181/1 - 390900948 (the Heidelberg STRUCTURES Excellence Cluster).
\section{The Decoupled Haydys-Witten Equations and a Weitzenböck Formula}
\label{sec:vanishings-weitzenböck}

In this section, we introduce the global version of the Haydys-Witten equations and their decoupled version, and establish a Weitzenböck formula that relates it to the full Haydys-Witten equations.

Let $(M^5, g)$ be a complete Riemannian manifold.
Consider a principal $G$-bundle $E\to M^5$ for some compact Lie group $G$, write $\mathcal{A}(E)$ for the space of gauge connections, and denote by $\ad E$ the adjoint bundle associated to the Lie algebra $\mathfrak{g}$ of $G$.

Assume $M^5$ admits a non-vanishing unit vector field $v\/$ and write ${\eta = g(v,\cdot) \in \Omega^1(M)}$ for its dual one-form.
Since $v$ is non-vanishing, the distribution $\ker \eta \subset TM^5$ is regular, i.e. a vector subbundle, of rank 4.
Observe that the pointwise linear map
\begin{align}
	T_\eta: \Omega^2(M) \to \Omega^2(M), \quad
	\omega \mapsto \hodge_{5} ( \omega \wedge \eta)
\end{align}
has eigenvalues $\{ -1, 0, 1\}$, such that $\Omega^2(M)$ decomposes into the corresponding eigenspaces:
\begin{align}
	\Omega^2(M) = \Omega^2_{v,-}(M) \oplus \Omega^2_{v,0}(M) \oplus \Omega^2_{v,+}(M)
\end{align}
We use the notation $\omega^+$ to denote the part of $\omega$ that lies in $\Omega^2_{v,+}(M)$.

Consider a pair $(A,B) \in \mathcal{A}(E) \times \Omega^2_{v,+}(\ad E)$, consisting of a gauge connection $A$ and a self-dual two form $B$.
Let $\nabla^A$ denote the covariant derivative on $\Omega^2_{v,+}(\ad E)$ induced by $A$ and the Levi-Civita connection, and define $\delta_A^+ : \Omega^2_{v,+}(M,\ad E) \to \Omega^1(M, \ad E)$ by the composition $\delta_A^+ B = - g^{\mu\nu} \iota_\mu \nabla^A_\nu B$.
The Haydys-Witten equations are defined by:
\begin{align}
	\begin{split}
	F_A^+ - \sigma(B, B) - \nabla_{v}^A B &= 0\ , \\	
	\imath_{v}\ F_A -  \delta_A^+ B &= 0\ .
	\end{split}
	\label{eq:vanishings-setting-Haydys-Witten-equations}
\end{align}

Assume now that there is an almost complex structure $J$ on the vector bundle $\ker \eta \to M^5$ that is compatible with the metric, i.e. $g(J\cdot, J \cdot) = g(\cdot, \cdot)$.
Note that $J$ lifts, via the tensor product $J\otimes J$, to a map on $\Omega^2_{v,+}(M^5)$.
At each point this map has eigenvalues $\{ +1 , -1 , -1 \}$.
To see this, let $p \in M^5$ and $(x^\mu, y)_{\mu=0,\ldots,3}$ be coordinates in a neighbourhood of $p$, such that $v = \del_y$ and chosen in such a way that $dx^\mu$ is a canonical basis of $J$, i.e. $J dx^0 = dx^1$ and $J dx^2 = dx^3$.
The two-forms $e_i = dx^0 \wedge dx^i + \frac{1}{2}\epsilon_{ijk} dx^j\wedge dx^k$, $i=1,2,3$, are a local basis of $\Omega^2_{v,+}$.
One easily sees that $J e_1 = + e_1$ and $Je_{2/3} = - e_{2/3}$.

The decoupled Haydys-Witten equations arise if one sets to zero the $-1$-eigenparts of the expressions in the Haydys-Witten equations~\eqref{eq:vanishings-setting-Haydys-Witten-equations} that contain $B$, independently from the remaining terms:
\begin{definition}[Decoupled Haydys-Witten Equations]\label{def:vanishings-dHW}
\begin{align}\label{eq:vanishings-dHW}
	\tfrac{1+J}{2} \left( \sigma(B,B) + \nabla^A_v B \right) &= F_A^+ &
	\tfrac{1-J}{2} \left( \sigma(B,B) + \nabla^A_v B \right) &= 0 \\
	\delta_A^+ \tfrac{1+J}{2} B &= \imath_v F_A &
	\delta_A^+ \tfrac{1-J}{2} B &= 0
\end{align}
We denote the associated differential operator by ${\dHW[v, J]}$.
\end{definition}
In these equations the part of the $B$-terms that is located in the negative eigenspaces is decoupled from the gauge curvature, hence the name.
A direct calculation in coordinates shows that the decoupled Haydys-Witten equations are locally equivalent to~\eqref{eq:vanishings-dHW-4D} in the $4\mathcal{D}$-formalism.

To find a relation between $\HW[v]$ and $\dHW[v,J]$, split the terms in the Haydys-Witten equations that involve $B$ into their positive and negative eigenparts with respect to $J$.
To simplify notation, write $J^\pm := \tfrac{1\pm J}{2}$ as shorthand for the projection to the $\pm 1$-eigenspaces of $J$ in $\Omega^2_{v,+}$.

Denote by $\hodge$ the Hodge star operator and equip $\Omega^k(M^5, \ad E)$ with the density-valued inner product ${\langle a,b \rangle = \Tr a \wedge\hodge b}$.
Upon integration this provides the usual $L^2$-product $\langle a, b\rangle_{L^2(W)} = \int_{W^n} \langle a , b \rangle$ on $\Omega^k(W^n, \ad E)$.
The (density-valued) $L^2$-norm of the Haydys-Witten operator can be rewritten as follows
\begin{align}
	\norm{\HW[v](A,B)}^2
	&= \norm{ F_A^+ - \left( J^+ + J^- \right) \left( \sigma(B,B) + \nabla^A_v B \right)}^2
		+ \norm{ \imath_v F_A - \delta_A^+ \left( J^+ + J^- \right) B}^2  \\
	&= \norm{\dHW[v,J](A,B)}^2
		- 2 \left\langle F_A^+ , J^-\left(\sigma(B,B) + \nabla^A_v B\right) \right\rangle
		- 2 \left\langle \imath_v F_A - \delta_A^+ J^+ B, \delta_A^+ J^- B \right\rangle
\end{align}
Here we used that the $\pm 1$ eigenspaces of $J$ in $\Omega^2_{v,+}$ are orthogonal with respect to $\langle \cdot, \cdot \rangle$ to remove one of the mixed terms on the right hand side.
By an explicit calculation in a basis one further finds that the extra terms on the right hand side are in fact total derivatives:
\begin{align}
	d \Tr \left( F_A \wedge J^- B \right)
	&= \left\langle F_A^+ , \nabla^A_v J^- B \right\rangle + \left\langle \imath_v F_A, \delta_A^+ J^- B \right\rangle\\[0.5em]
	d \Tr \left( \delta_A^+ J^+ B \wedge J^- B \wedge \eta \right)
	&= \left\langle F_A^+, J^- \sigma(B,B)  \right\rangle - \left\langle \delta_A^+ J^+ B, \delta_A^+ J^- B \right\rangle
\end{align}
For the first equation one again uses orthogonality of the different subspaces of $\Omega^2$ with respect to $\langle \cdot, \cdot \rangle$ and the Bianchi identity $d_A F_A= 0$.
The second equation follows similarly after some reordering of derivatives.

With these identifications we arrive at the following.
\begin{lemma}\label{lem:vanishings-weitzenböck}
There is a Weitzenböck formula adapted to the decoupled Haydys-Witten equations
\begin{align} \label{eq:vanishings-weitzenböck}
	\int_{M^5} \norm{\HW[v](A,B)}^2  &= \int_{M^5} \left( \norm{\dHW[v,J](A,B)}^2 + d\chi \right)
\end{align}
where the exact term $\chi = \chi_1 + \chi_2$ is given by
\begin{align}
	\chi_1 &= - 2 \Tr \left( F_A \wedge J^- B \right)  \label{eq:vanishings-boundary-term-1} \\
	\chi_2 &= - 2 \Tr \left( \delta_A^+ J^+ B \wedge J^- B \wedge \eta \right) \label{eq:vanishings-boundary-term-2}
\end{align}
\end{lemma}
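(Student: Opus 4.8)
The plan is to establish, first, the pointwise identity $\norm{\HW[v](A,B)}^2 = \norm{\dHW[v,J](A,B)}^2 + d\chi$ as an equality of top-degree forms on $M^5$, and then to integrate over $M^5$; no analytic input (completeness of $(M^5,g)$, decay of $(A,B)$) enters, since in~\eqref{eq:vanishings-weitzenböck} the exact term is kept explicit rather than discarded. First I would expand $\norm{\HW[v](A,B)}^2 = \norm{F_A^+ - (\sigma(B,B)+\nabla^A_v B)}^2 + \norm{\imath_v F_A - \delta_A^+ B}^2$ by splitting the $B$-terms into $J^+$- and $J^-$-parts, $\sigma(B,B)+\nabla^A_v B = J^+(\sigma(B,B)+\nabla^A_v B) + J^-(\sigma(B,B)+\nabla^A_v B)$ and $\delta_A^+ B = \delta_A^+ J^+ B + \delta_A^+ J^- B$. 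In the first summand the cross term $\langle J^+(\sigma(B,B)+\nabla^A_v B),\, J^-(\sigma(B,B)+\nabla^A_v B)\rangle$ vanishes by pointwise orthogonality of the $\pm 1$-eigenspaces of $J$ in $\Omega^2_{v,+}$, which is immediate from the eigenbasis $e_1,e_2,e_3$ with $Je_1 = e_1$, $Je_{2/3} = -e_{2/3}$; the surviving $J^+$- and $J^-$-squares reassemble into the first two equations of \autoref{def:vanishings-dHW}, and the second summand likewise produces the last two. What remains are the two residual pairings $-2\langle F_A^+,\, J^-(\sigma(B,B)+\nabla^A_v B)\rangle$ and $-2\langle \imath_v F_A - \delta_A^+ J^+ B,\, \delta_A^+ J^- B\rangle$, so that $\norm{\HW[v](A,B)}^2$ equals $\norm{\dHW[v,J](A,B)}^2$ minus these two terms.

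The core of the argument is then to recognize the two residual pairings as exact, namely as $d\chi_1$ and $d\chi_2$ with $\chi_1 = -2\Tr(F_A\wedge J^- B)$ and $\chi_2 = -2\Tr(\delta_A^+ J^+ B\wedge J^- B\wedge\eta)$. For $\chi_1$, the graded Leibniz rule gives $d\Tr(F_A\wedge J^- B) = \Tr(d_A F_A\wedge J^- B) + \Tr(F_A\wedge d_A J^- B)$; the first term drops by the Bianchi identity $d_A F_A = 0$, and the second is matched, in a local frame adapted to $(v,J)$, against $\langle F_A^+,\, \nabla^A_v J^- B\rangle + \langle \imath_v F_A,\, \delta_A^+ J^- B\rangle$ after decomposing $d_A(J^- B)$ into its $\nabla^A_v$-component and its components along $\ker\eta$ and invoking $\delta_A^+ = -g^{\mu\nu}\iota_\mu\nabla^A_\nu$ together with the $v$-self-duality of $J^- B$. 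For $\chi_2$ the scheme is the same: expanding $d\Tr(\delta_A^+ J^+ B\wedge J^- B\wedge\eta)$ by Leibniz, commuting covariant derivatives (which again produces an $F_A$, controlled by Bianchi) and reordering, one reassembles $\langle F_A^+,\, J^-\sigma(B,B)\rangle - \langle \delta_A^+ J^+ B,\, \delta_A^+ J^- B\rangle$, using once more eigenspace orthogonality in $\Omega^2_{v,+}$ and the canonical form $B = \sum_a \phi_a e_a$.

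The step I expect to be the genuine obstacle is the verification of these two total-derivative identities. It requires the normal coordinates $(x^\mu,y)$ adapted simultaneously to $v = \del_y$ and to $J$ as in the $4\mathcal{D}$-formulation, careful bookkeeping of how $\hodge_{5}$ acts on the products $e_i\wedge e_j$, $e_i\wedge\eta$ and $dx^\mu\wedge e_i$, and --- importantly --- the observation that $\delta_A^+$ carries a connection term, so that "total derivative" means $d$, not $d_A$, of an $\ad E$-trace, the stray connection contributions cancelling under $\Tr$. Being an algebraic identity of smooth top-degree forms, it may be checked at a single point in such a frame, where the residual pairings and the candidates $\chi_1,\chi_2$ are all expressed through $F_A$, the $\phi_a$, and their covariant derivatives, so the check reduces to a finite (if tedious) comparison of coefficients. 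Once both identities hold, the lemma follows at once: substitute them into the expansion of the first step, set $\chi = \chi_1 + \chi_2$, and integrate over $M^5$ to obtain~\eqref{eq:vanishings-weitzenböck} with $\chi_1,\chi_2$ as in~\eqref{eq:vanishings-boundary-term-1}--\eqref{eq:vanishings-boundary-term-2}.
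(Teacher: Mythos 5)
Your proposal follows essentially the same route as the paper: expand $\norm{\HW[v](A,B)}^2$ by splitting the $B$-terms with $J^\pm$, use pointwise orthogonality of the $\pm 1$-eigenspaces of $J$ in $\Omega^2_{v,+}$ to drop the cross term, and identify the two residual pairings with $d\chi_1 + d\chi_2$ via the Bianchi identity and a reordering of derivatives checked in a frame adapted to $(v,J)$, exactly as the paper does (it likewise relegates the two total-derivative identities to an "explicit calculation in a basis"). Your grouping of the residual terms into the identities $d\Tr(F_A\wedge J^-B) = \langle F_A^+,\nabla^A_v J^-B\rangle + \langle \imath_v F_A,\delta_A^+ J^-B\rangle$ and $d\Tr(\delta_A^+J^+B\wedge J^-B\wedge\eta) = \langle F_A^+,J^-\sigma(B,B)\rangle - \langle\delta_A^+J^+B,\delta_A^+J^-B\rangle$ matches the paper's, so the argument is correct and not a genuinely different approach.
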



\section{Poly-Cylindrical Ends and Boundary Conditions}
\label{sec:vanishings-setting}

In Witten's approach to Khovanov homology, one considers manifolds of the form $\R_s \times X^3 \times \R^+_y$, which possess both a boundary and non-compact ends.
Moreover, the non-vanishing vector field $v=\del_y$ differentiates between the non-compact directions $\R_s$ and $\R^+_y$, by way of the glancing angles $\theta = \pi/2$ and $\beta = 0$, respectively.
This structure is well-described by manifolds with poly-cylindrical ends and boundaries.

In the literature, poly-cylindrical ends are typically assumed to be located at geodesic infinity.
In this case poly-cylindrical manifolds are identified with the interior of a manifold with corners that is equipped with a metric, such that any boundary point lies at infinity and the metric is of product type within some small tubular neighborhood.
Sometimes the definition is further relaxed by assuming that the metric on the manifold with corners is an \emph{exact $b$-metric} in the sense of Melrose \cite{Melrose1995, Melrose1996}.
In this latter case the metric on $M$ approaches a product metric exponentially fast, or put differently, its asymptotic expansion in $\{x^k\}_{k \in \mathbb{Z}}$ vanishes to all orders.
We expect that our results can be generalized to this situation, though we don't further pursue this here.

In our situation, we also want to include boundaries and corners at finite distance.
Therefore, when we talk about a (poly-)cylindrical end of a manifold we always take this to encompass boundaries (and corners) at both, finite and infinite geodesic distance.
We will now set up notation that will be used in the rest of this article and provide definitions for the underlying geometry.
After that, we also provide a first description of the relevant boundary conditions.

Let $M$ be a manifold with corners.
For any $p\in M$ choose a chart $(U, \phi)$ with $\phi(x) = p$ and define the \emph{depth} of $p$ by the number of components of $x$ that are zero.
This coincides with the number of boundary faces of $M$ that contain $p$ and is independent of the choice of chart.
Define the depth-$k$ stratum of $M$ to be the points
\begin{align}
	S^k(M) := \set{ p \in M \suchthat \operatorname{depth}(p) = k }
\end{align}
The interior of $M$ corresponds to $S^0(M)$, while all boundary faces of codimension one are contained in the boundary stratum $\del M := S^1(M)$, and more generally points that lie on corners of codimension $k$ are collected in $S^k(M)$.
Clearly $M = \sqcup_{k=0}^n S^k(M)$, providing a stratification of $M$.
Each stratum $S^k(M)$ is a manifold of dimension $n-k$, since corners of higher codimension are explicitly excluded.

The boundary stratum is a disjoint union of connected components $\del M = \sqcup_{i\in I\/} \del_i M$, where for simplicity we assume that $I$ is finite.
Let us emphasize that each boundary face $\del_i M$ is an open manifold without boundary.
We typically denote a boundary defining function of the boundary face $\del_i M$ by $s_i$.

\begin{definition}
A manifold with poly-cylindrical ends is a complete Riemannian manifold $(M,g)$ that is diffeomorphic to a submanifold (with corners) of a compact manifold with corners $M_0$, equipped with a metric $g_0$ that pulls back to $g$, and such that the following conditions are satisfied:
For each boundary component $\del_i M_0$ there exists a tubular neighbourhood $[0,\epsilon)_{s_i} \times \del_i M_0$, on which $g_0$ is either
\begin{align}
	g_0 = ds_i^2 + h_{\del_i M}
	\qquad
	\text{or}
	\qquad
	g_0 = \frac{ds_i^2}{s_i^2} + h_{\del_i M}
\end{align}
where $h_{\del_i M}$ is a complete metric on $\del_i M$.
Moreover, this is compatible at corners, i.e. there exists a neighbourhood $[0,\epsilon)^m \times X^{n-m}$ of each connected component of $S^k(M)$, where
\begin{align}
	g_0 = \sum_{k \in K} \frac{d s_k^2}{s_k^2} + \sum_{i \in I\setminus K} ds_i^2 + h_{X^{n-m}} 
\end{align}
\end{definition}
\begin{example}
Let $M^5 = \R_s \times X^3 \times \R^+_y$, together with the product metric $g = ds^2 + g_{X^3} + dy^2$.
Consider the compact manifold with corners $M_0 = [-1,1]_{s_0} \times X^3 \times [0,1]_{y_0}$, equipped with $g_0 = \big(\frac{ds_0}{1-\abs{s_0}}\big)^2 + h_{X^3} + \big(\frac{dy_0}{1-y_0}\big)^2$.
Then the map that takes $s_0 \mapsto s = -\operatorname{sgn}(s_0) \log(1 - \abs{s_0})$ and $y_0 \mapsto y= -\log(1- y_0)$ defines an isometry between the submanifold $U = (-1,1)_{s_0} \times X^3 \times [0,1)_{y_0}$ and $(M^5, g)$.
\end{example}

A poly-cylindrical manifold admits a convenient compact exhaustion by a family of manifolds with corner $M_\epsilon$ that mirrors the poly-cylindrical structure of $M$.
To make this precise, note that $h := \left( \prod_{k\in K} s_k^2 \right) \cdot g_0 : TM_0 \times TM_0 \to \R$ defines a Riemannian metric on $M_0$.
Let $d_h(x,y)$ denote the induced distance function and define $M_\epsilon = \set{ x \in M_0 \suchthat d_h(x, \del_i M_0) \geq \epsilon }$.
We equip $M_\epsilon$ with the restriction of $g$, making it into a compact Riemannian manifold with poly-cylindrical ends that approaches $M$ for $\epsilon \to 0$.
We will use the manifolds $M_\epsilon$ to regularize the integrals in the Weitzenböck formula \autoref{lem:vanishings-weitzenböck}.
In particular, since $M_\epsilon$ is compact, the following version of Stokes' theorem holds.
\begin{theorem}[Stokes' Theorem on Manifolds with Corners \cite{Whitney1957}]
Let $M_\epsilon$ be a compact manifold with corners, then
\begin{align}
	\int_{M_\epsilon} d \chi = \sum_{i \in I} \int_{\del_i M_\epsilon} \chi \ ,
\end{align}
where the sum on the right hand side is over all boundary faces of codimension one.
\end{theorem}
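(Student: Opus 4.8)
This is the classical Stokes theorem for manifolds with corners, so the plan is the standard localization argument, spelled out just enough to pin down the orientation conventions. Write $n = \dim M_\epsilon$, so $\chi \in \Omega^{n-1}(M_\epsilon)$ (here $n = 5$), and observe that only $C^1$ regularity of $\chi$ up to the corners is needed — a point that matters when the theorem is later applied with $\chi = \chi_1 + \chi_2$. First I would fix a finite atlas $\{(U_a,\phi_a)\}$ of $M_\epsilon$, adapted to the stratification, in which each $\phi_a$ is a diffeomorphism of $U_a$ onto an open subset of a model corner $[0,\infty)^{k_a} \times \R^{n-k_a}$, chosen so that the codimension-one faces of $M_\epsilon$ meeting $U_a$ are exactly the coordinate slices $\{x^j = 0\}$, $1 \le j \le k_a$; this is possible by the definition of a manifold with corners. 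Choosing a smooth partition of unity $\{\rho_a\}$ subordinate to this cover and using linearity of both sides in $\chi$ together with $\chi = \sum_a \rho_a \chi$, it suffices to prove the identity for each $\chi_a := \rho_a\chi$, which is $C^1$ and compactly supported in a single chart.

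Second comes the local computation in the model corner. Writing $\chi_a = \sum_{j=1}^n f_j\, dx^1 \wedge \cdots \wedge \widehat{dx^j} \wedge \cdots \wedge dx^n$ with each $f_j \in C^1$ compactly supported, one has $d\chi_a = \big(\sum_{j=1}^n (-1)^{j-1}\del_j f_j\big)\, dx^1 \wedge \cdots \wedge dx^n$. Integrating over $[0,\infty)^{k_a}\times\R^{n-k_a}$ and applying the fundamental theorem of calculus one coordinate at a time, the $j$-th term vanishes for $j > k_a$ (compact support in $x^j \in \R$) and contributes $(-1)^j \int_{\{x^j=0\}} f_j\, dx^1\cdots\widehat{dx^j}\cdots dx^n$ for $1 \le j \le k_a$. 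With the outward-normal-first orientation convention, this sign $(-1)^j$ is precisely the one relating $dx^1\wedge\cdots\widehat{dx^j}\cdots\wedge dx^n$ to the induced orientation of the face $\{x^j=0\}$, so the $j$-th term equals $\int_{\{x^j=0\}}\chi_a$ with $\chi_a$ pulled back to that face. Hence $\int_{M_\epsilon} d\chi_a = \sum_{j=1}^{k_a}\int_{\{x^j=0\}}\chi_a$; for an interior chart ($k_a = 0$) both sides vanish.

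Third, I would reassemble. Summing over $a$ gives $\int_{M_\epsilon} d\chi$ on the left. On the right, every codimension-one face $\del_i M_\epsilon$ is covered by those charts whose slice $\{x^j=0\}$ meets it, and $\sum_a \rho_a = 1$ on $\del_i M_\epsilon$, so the face terms collect into $\sum_{i \in I}\int_{\del_i M_\epsilon}\chi$. The corner strata $S^k(M_\epsilon)$ with $k \ge 2$ never appear: being submanifolds of dimension at most $n-2$ they are negligible for the integration of $(n-1)$- and $n$-forms, so they contribute to neither side.

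The one step that truly needs care — and the place a careful reader will want detail — is the orientation bookkeeping: one must fix the induced orientation on each codimension-one face (outward normal first) once and for all and check that the sign $(-1)^j$ coming from the fundamental theorem of calculus matches it uniformly, including at a point where several faces meet. Everything else is routine: the adapted atlas and the subordinate partition of unity exist with the required smoothness up to the corners directly from the definition of a manifold with corners, the corner strata are measure-negligible, and the computation goes through verbatim for $C^1$ forms.
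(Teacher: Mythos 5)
The paper offers no proof of this statement at all: it is quoted as a classical result and attributed to Whitney, so there is no argument of record to compare against. Your proof is the standard and correct one — adapted atlas into model corners $[0,\infty)^{k}\times\R^{n-k}$, partition of unity, fundamental theorem of calculus in each coordinate, with the $(-1)^j$ from integration matching the outward-normal-first orientation of the face $\{x^j=0\}$, and the codimension-$\geq 2$ strata being negligible, which is exactly what justifies writing the right-hand side as a sum over the open faces $\del_i M_\epsilon$ as in the statement. Your remark that only $C^1$ regularity of $\chi$ up to the corners is needed is also apt, since the paper later applies the theorem to $\chi=\chi_1+\chi_2$ built from solutions with prescribed boundary asymptotics rather than arbitrary smooth data.
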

Each of the boundaries of $M_\epsilon$ can be thought of as an "$\epsilon$-displacement" of the corresponding end of $M$ into the interior.
By this we mean that for each end $[0,1)_s\times W$ of $M$, the corresponding end of $M_\epsilon$ can be identified with an embedding of $[\epsilon, 1]_s\times W \hookrightarrow [0,1)_s\times W$.
At non-compact ends, the distance between points with $s=\epsilon$ and $s=1$ is finite and proportional to $\epsilon^{-1}$, while points at $s=0$ reside at infinity.

From now on let $(M^5,g)$ denote a poly-cylindrical five-manifold and write $(M_0, g_0)$ for a suitable ambient manifold.
Throughout, we take $\del M^5$ to include both, its boundary components and non-compact cylindrical ends, i.e. we identify the boundary faces $\del_i M^5$ with the corresponding boundary components of $M_0$.

Let $[0,\epsilon)_{s} \times W^4$ be a tubular neighbourhood of a cylindrical end and denote the inward-pointing unit-normal vector field by $u$.
In case of a boundary this means that $u=\del_s$, while for a non-compact end we have $u = s \del_s$.
We say $v$ approaches a cylindrical end at constant angle if there is a tubular neighbourhood\footnotemark~on which the incidence angle $g(u ,v) = \cos \theta$ is constant.
This angle determines the natural boundary conditions at a given cylindrical end.
\footnotetext{This is equivalent to an asymptotic equivalence of $g(u,v) - \cos \theta \sim 0$ as $s_i\to 0$. 
It seems reasonable that this condition can be slightly weakened to $g(u,v) \sim \cos \theta$.}

We will now give a first overview of the relevant boundary conditions, first for boundaries and then for non-compact ends.
The classes of boundary conditions serve as organizational structure for the rest of this article.
A detailed discussion, in particular for the Nahm pole and knot singularity model solutions, will be given later, see \autoref{sec:vanishings-polyhomogeneous-expansion-of-nahm-pole-solutions} and \autoref{sec:vanishings-knot-singularity-boundary}, respectively.

At a boundary with incidence angle $g(u,v) = \cos\beta$, $\beta\neq \pi/2$, we impose $\beta$-twisted Nahm pole boundary conditions, which fix the maximal rate of growth of $(A,B)$ as one approaches the boundary.
The condition specifies that $(A,B)$ are asymptotic to a model configuration $(A^{\rho,\beta}, B^{\rho,\beta})$ of order $\mathcal{O}(y^{-1})$.
The model configuration is associated to a choice of Lie algebra homomorphism $\rho: \mathfrak{su}(2) \to \mathfrak{g}$.

It is also possible to include knot singularities of weight $\lambda\in \Gamma_{\mathrm{char}}^\vee$ in the co-character lattice of $\mathfrak{g}$.
In that case one considers the geometric blowup of a surface $\Sigma_K \subset \del M$, which introduces an additional boundary $\del_{K} M \simeq \Sigma_K \times H^2$, where $H^2$ denotes the two-dimensional hemisphere.
While the incidence angle of $v$ at this new boundary can no longer be constant, we assume that $v$ maintains a constant glancing angle $\theta$ with $\Sigma_K$.
At the knot boundary $\del_K M$, the fields must then be asymptotic to another model configuration $(A^{\lambda,\theta}, B^{\lambda,\theta})$ of order $\mathcal{O}(R^{-1})$, where $R$ denotes the radial distance to $\Sigma_K$.

The Haydys-Witten pair $(A,B)$ satisfies $\beta$-twisted Nahm pole boundary conditions at $\del M$, with knot singularity of weight $\lambda\in \Gamma_{\mathrm{char}}^\vee$ along $\Sigma_K$, if for some $\epsilon>0$
\begin{enumerate}
	\item near $\del_0 M$: $(A,B) = \big( A^{\rho,\beta} ,\; A^{\rho,\beta} \big) + \mathcal{O}\big(y^{-1+\epsilon}\big)$
	\item near $\del_K M$: $(A,B) = (A^{\lambda,\theta} , B^{\lambda,\theta} ) + \mathcal{O}(R^{-1+\epsilon})$
\end{enumerate}

Consider now the case of a non-compact end $[0,\epsilon)_{s} \times W^4$ with incidence angle $g(u,v) = \cos\theta$.
Natural boundary conditions at non-compact ends are given by stationary ($s$-invariant) solutions of the underlying differential equations.
It turns out that the specialization to $s$-invariant Haydys-Witten equations is given either by the Vafa-Witten equations or the $\theta$-Kapustin-Witten, depending on the value of $\theta$:
\begin{align}
	\HW[v](A,B) \stackrel{s\text{-inv.}}{\rightsquigarrow} \left\{ 
	\begin{array}{ll} 
		\VW(\tilde A, \tilde B, C) & \theta\equiv0 \pmod{\pi} \, , \\ 
		\KW[\theta](\tilde A, \phi) & \text{ else} 
	\end{array} \right.
\end{align}
If $\theta \neq 0 \pmod{\pi}$, there must be a non-vanishing vector field $w$ on $W^4$, such that $v=\cos\theta u + \sin\theta w$.
Moreover, we can define an orthogonal vector field $v^\perp = - \sin\theta u + \cos\theta w$.
With this notation, the four-dimensional fields above are given in terms of $(A,B)$ as follows
\begin{align}
	\tilde A &:= i^\ast A \ , \qquad
	\begin{array}{ll}
	\tilde{B} = i^\ast B \ , \ C = A_s \\[0.4em]
	\phi := i^\ast \left( \imath_{v^\perp} B + A_s \wedge w^\flat \right) 
	\end{array}
\end{align}
We will consider stationary solutions that either satisfy a finite energy condition, or that exhibit a $\beta=(\pi/2 - \theta)$-twisted Nahm pole at one of the adjacent corners of $M^5$.

We arrange the ends of $M^5$, in accordance with these boundary conditions, into the following four classes:
\begin{enumerate}[label={(\arabic*)}, noitemsep]
	\item Nahm pole boundaries $\del_{\mathrm{NP}} M$, 
	\item Knot boundaries $\del_K M$, 
	\item Kapustin-Witten ends with finite energy $\del_{\text{KW}} M$, and 
	\item Kapustin-Witten ends with Nahm poles, denoted by $\del_{\text{NP-KW}} M$.
\end{enumerate}

Let us shortly exemplify this in connection with the gauge theoretic approach to Khovanov homology.
In this situation one considers the five-manifold $M^5  = \mathbb{R}_s \times X^3 \times \mathbb{R}^+_y$ together with the vector field $v= \del_y$.
In presence of a knot $K \subset X^3$, one then moves to the geometric blowup $[M^5; \Sigma_K]$, where $\Sigma_K = \R_s \times K$.
The blown-up manifold has two types of boundary components.
First, a Nahm pole boundary $\del_{\text{NP}} M$, corresponding to those original boundary points at $y = 0$ that are not part of the $\Sigma_K$.
Second, a knot boundary $\del_K M^5$, given by the points of the unit normal bundle over the surface $\Sigma_K$.
$M^5$ also has several cylindrical ends.
A Kapustin-Witten end appears at $y \to \infty$, where the angle between $v$ and the inward pointing unit normal vector $-\del_y$ is $\theta = 0$, such that the fields approach a solution of the Vafa-Witten equations (hence, this should really be called a Vafa-Witten end).
Kapustin-Witten ends with additional Nahm poles at a corner of $M^5$ appear at $s\to \pm \infty$, where fields approach a Kapustin-Witten solutions with a Nahm pole located at the corner $\{y=0, s=\pm\infty\}$.

\begin{remark}
In case (3) the fields $(A,B)$ approach a stationary, finite energy solution of the Kapustin-Witten equations, such that one can determine the behaviour of $\chi$ with help of the vanishing theorem for $\theta$-Kapustin-Witten solutions.
In the situation of (4) we similarly ask for a stationary solution, but the fields exhibit a Nahm pole at an adjacent corner.
In this situation the fields diverge and, in particular, do not have finite energy.
\end{remark}


\section{Polyhomogeneous Expansion of Twisted Nahm Pole Solutions}
\label{sec:vanishings-polyhomogeneous-expansion-of-nahm-pole-solutions}

In this section we investigate the asymptotic behaviour of a Haydys-Witten solution $(A,B)$ in the vicinity of a Nahm pole boundary $\del_{\mathrm{NP}} M$, i.e. under the assumption that $(A,B)$ satisfy twisted Nahm pole boundary conditions without knot singularities.
The goal of this analysis is to extract the leading order behaviour of $F_A$ and $\delta_A^+ J^+ B$ near a Nahm pole boundary, which is later used to determine the asymptotics of $\chi$.

The analysis goes back to an essentially identical discussion for the $\theta = \pi/2$ version of the Kapustin-Witten equations with untwisted Nahm pole boundary conditions by Mazzeo and Witten in \cite{Mazzeo2013a}.
There, a Weitzenböck formula on $\R^3\times \R^+$ very similar to \autoref{lem:vanishings-weitzenböck} -- in fact a more restrictive one -- was established and then utilized to determine the analytic properties of the Nahm pole boundary conditions in the first place.
While Mazzeo and Witten already touched on the leading and constant orders for general four-manifolds, Siqi He has later expanded on the constraints that arise for subleading orders on general four-manifolds and unveiled a deep relation to the geometry of the boundary \cite{He2018a}.

To set the stage, let $U=W^4 \times [0,1)_y$ be a tubular neighbourhood of a Nahm pole boundary $\del_{\mathrm{NP}} M$.
Write $u=\del_y$ for the inward-pointing, unit normal vector field.
As always, assume that the incidence angle determined by $g(u,v) = \cos\beta$ is constant on all of $U$.
Whenever $\beta \neq 0$ there is a non-vanishing unit vector field $w$ on $W^4$, such that $v= \sin\beta w + \cos\beta u$.

Let us shortly recall some additional details about the Nahm pole boundary conditions on $\del_{\mathrm{NP}} M$, see \cite{Mazzeo2013a, Mazzeo2017}.
Let $\phi_\rho \in \Omega^2_{v,+}(U, \ad E)$ be some fixed boundary configuration associated to the Nahm pole boundary conditions on $U$.
This means that it satisfies $\phi_\rho - \sigma(\phi_\rho, \phi_\rho) = 0$, such that its components $(\phi_\rho)_i = \mathfrak{t}_i$, $i=1,2,3$, span an $\mathfrak{su}(2)$ subalgebra inside of $\mathfrak{g}$.
There is a unique corresponding two-form that satisfies $\phi_\rho^\tau + \sigma(\phi_\rho^\tau , \phi_\rho^\tau) = 0$.
Here $\tau=(132)$ denotes an anticyclic permutation that acts on components by $(\phi_\rho^\tau)_i = (\phi_\rho)_{\tau(i)}$ and is related to a reversal of orientation on $\Omega^2_{v,+}(U)$.
If $\beta \neq 0$, there is an isomorphism by which we can view elements of $\Omega^2_{v,+}(U)$ as the one-forms dual to a rank 3 subbundle $\Delta_{(u,v)}^\perp \subset TU$.
Here, $\Delta_{(u,v)}^\perp$ is the orthogonal complement of the subbundle spanned by $u$ and $v$.
With that in mind, the Nahm pole boundary condition states that as $y\to 0$, $A$ and $B$ are asymptotic to 
\begin{align}
	A^{\rho,\beta} = \sin\beta \; \frac{\phi_\rho^\tau}{y} \ ,  \qquad B^{\rho,\beta} = \cos\beta\; \frac{\phi_\rho}{y} \ .
\end{align}
Near a Nahm pole, the Haydys-Witten equations are amended by the following gauge fixing condition
\begin{align}
	d_{A^0}^{\hodge_5} (A - A^0) + \sigma(B^{\mathrm{NP}}, B - B^{\mathrm{NP}}) &= 0 \ ,
\end{align}
making the equations into an elliptic system of differential equations.

Ultimately, we want to investigate the leading order of $\chi_1= -2 \Tr( F_A \wedge J^- B)$ and $\chi_2 = -2 \Tr( \delta_A^+ J^+ B \wedge J^- B \wedge \eta)$ near $\del_{\mathrm{NP}} M$.
Since this involves derivatives and products of $A$ and $B$, we need to study the subleading orders in a polyhomogeneous expansion of $A$ and $B$ as $y\to 0$.

Write $A= y^{-1} \sin\beta\ \phi_\rho^\tau + \omega + a$ and $B = y^{-1} \cos\beta\ \phi_\rho + b$, where by assumption of the Nahm pole boundary condition $a,b \in \mathcal{O}(y^{-1+\epsilon})$, while $\omega$ is a gauge connection on the restriction of $E$ to $W^4$, smoothly extended into the bulk.
If $(A,B)$ is a solution of the Haydys-Witten equations that satisfies Nahm pole boundary conditions, then an elliptic regularity theorem Mazzeo and Witten \cite{Mazzeo2013a} states that there exist polyhomogeneous expansions
\begin{align}
	a &\sim \sum_{(\alpha, k) \in \Delta_0} a_{\alpha,k}(x^\mu)\; y^{\alpha} (\log y)^k \ , &
	b &\sim \sum_{(\alpha, k) \in \Delta_0 } b_{\alpha, k}(x^\mu)\; y^\alpha (\log y)^k &
	(y&\to 0)
	\label{eq:vanishings-general-polyhomogeneous-expansion}
\end{align}
where the indicial set $\Delta_0 \subset \C \times \N$ is bounded from the left by $1$ (in particular $a_{0,k} = b_{0,k} = 0$).

Plugging $A= y^{-1} \sin\beta\ \phi_\rho^\tau + \omega + a$ and $B = y^{-1} \cos\beta\ \phi_\rho + b$ into the Haydys-Witten equations then determines a sequence of constraints on the functions $a_{\alpha,k}$ and $b_{\alpha,k}$.
For our purposes it will suffice to determine $\chi$ up to $\mathcal{O}(y^\delta)$, $\delta >0$, so we only need to consider an expansion up to order $y^{2+\delta}$.
We will see below that for generic values of $\beta$ only the following terms in the expansion are non-zero.
\begin{align}
	a &= a_{1,1}\ y \log y + a_{1,0}\ y + a_{2,1}\ y^2 \log y + a_{2,0}\ y^2 + \mathcal{O}(y^{5/2}) \\
	b &= b_{1,1}\ y \log y + b_{1,0}\ y + b_{2,1}\ y^2 \log y + b_{2,0}\ y^2 + \mathcal{O}(y^{5/2})
\end{align}
Moreover, the coefficient functions satisfy additional constraints that link them to the Riemannian curvature of the manifold.
In particular, the $\log y$-terms vanish if sectional curvature is flat in any plane that contains $w$.

To make this more precise, we need to establish analogues of the tools that were used in the discussion of indicial roots in \cite{Mazzeo2013a} and \cite{He2018a}.
First, the fibers of $\ad E$ decompose under the action of $(\phi_\rho)_i = \mathfrak{t}_i \in \mathfrak{su}(2)_\mathfrak{t}$ as a direct sum of spin $j$ representations.
We denote the associated vector bundles by $V_j$, $j \in J$.
Since we consider regular Nahm pole conditions, only positive integers $j$ appear; in fact for $G=SU(N)$ one finds $J = \{1, \ldots, N-1\}$ \cite{Mazzeo2013a}.
Second, there is an action by rotations $\mathfrak{so}(3) \simeq \mathfrak{su}(2)_{\mathfrak{s}}$ on $\Omega^2_{v,+}(W^4)$ and $(\Delta_{(u,v)}^\perp)^\ast$.
For fixed $j$, the bundles $\Omega^2_{v,+}(W^4, \ad E)$ and $\Hom(\Delta_{(u,v)}^\perp, V_j) \subset \Omega^1(W^4, \ad E)$ thus further decomposes under a combined $\mathfrak{su}(2)_\mathfrak{f}$ action, generated by $\mathfrak{f}_i = \mathfrak{t}_i + \mathfrak{s}_i$, into a direct sum of bundles $V_{j}^-$, $V_j^0, $ and $V_j^+$ of total spin $j-1$, $j$ and $j+1$, respectively.
To simplify notation, we will furthermore write $V_{j}^{w} := C^\infty(W^4, V_j)\ w^\flat$, where we let $\mathfrak{su}(2)_\mathfrak{s}$ acts trivially.
All in all, the relevant $\ad E$-valued differential forms decompose as
\begin{align}
	\begin{split}
	\Omega^1(W^4, \ad E) &=  \bigoplus_{j\in J} \bigg(\ V_{j}^{w} \oplus V_{j}^{-} \oplus V_{j}^{0} \oplus V_{j}^{+} \ \bigg) \\
	\Omega^2_{v,+}(W^4, \ad E) &= \bigoplus_{j\in J} \bigg( V_{j}^{-} \oplus V_{j}^{0} \oplus V_{j}^{+} \bigg)
	\end{split}
	\label{eq:vanishings-su2-decomposition}
\end{align}
This decomposition is helpful, because the leading order terms $A^{\mathrm{NP}} = y^{-1} \sin\beta \phi_\rho^\tau$ and $B^{\mathrm{NP}} = y^{-1} \cos\beta \phi_\rho$ are invariant under the action of $\mathfrak{su}(2)_\mathfrak{f}$ and, as a consequence, the constraints for $a_{\alpha,k}$ and $b_{\alpha,k}$ decompose into their $V_j^\eta$-valued components.
However, there is an additional subtlety regarding invariance of the Nahm pole terms under the action of $\mathfrak{su}(2)_\mathfrak{f}$.
To see this, let rotations $\mathfrak{su}(2)_\mathfrak{s}$ act on $\Omega^2_{v,+}$ with the "standard" orientation on $(e_1, e_2, e_3)$ and assume this basis is identified with $(dx^1, dx^2, dx^3) \in (\Delta_{(u,v)}^\perp)^\ast$.
Then $\phi_\rho =\sum_i \mathfrak{t}_i e_i$ or equivalently $\sum_i \mathfrak{t}_i dx^i$ is an element of the trivial representation $V_1^- \subset \Omega^2_{v,+}(W^4,\ad E) \simeq \Hom(\Delta_{(u,v)}^\perp, \ad E)$, such that $B^{\mathrm{NP}}$ is invariant.
Unfortunately, under this identification $\phi_\rho^\tau = \sum_i \mathfrak{t}_{\tau(i)} dx^i = \mathfrak{t}_1 dx^1 + \mathfrak{t}_2 dx^3 + \mathfrak{t}_3 dx^2$ is not invariant, since orientation reversal $\phi_\rho \mapsto \phi_\rho^\tau$ does not preserve $V_1^-$.
To remedy this, we need rotations to act on the $\Delta_{(u,v)}^\perp$-part of $A$ with the opposite orientation $(dx^1, dx^3, dx^2)$.
With that choice $\phi_\rho^\tau$ is an element of the trivial representation $\tilde{V}_1^-$, with respect to the analogous (but different) decomposition
\begin{align}
	\Omega^1(W^4, \ad E) = \bigoplus_{j \in J} \left( \tilde{V}_j^w \oplus \tilde{V}_j^- \oplus \tilde{V}_j^0 \oplus \tilde{V}_j^+ \right) \ .
\end{align} 
$A^{\mathrm{NP}}$ becomes invariant under $\mathfrak{su}(2)_\mathfrak{f}$ only with that choice.
We denote the restriction of a given differential form $\alpha$ to one of the subspaces $V_j^\eta$ or $\tilde{V}_j^\eta$ in this decomposition by $(\alpha)^{j,\eta}$ or $(\alpha)^{\widetilde{j,\eta}}$, respectively.

\begin{example}
In a local basis $(e_i)_{i=1,2,3}$ of $\Omega^2_{v,+}(U, \ad E)$ the subbundles $V_1^\eta \subset \Omega^2_{v,+}(W^4, \ad E)$ are given by:
\begin{align}
	V_1^- &= \spans{ \mathfrak{t}_1 e_1 + \mathfrak{t}_2 e_2 + \mathfrak{t}_3 e_3 } \ , \\
	V_1^0 &= \spans{ \mathfrak{t}_i e_j - \mathfrak{t}_j e_i}_{i\neq j} \ , \\
	V_1^+ &= \spans{ \mathfrak{t}_1 e_1 - \mathfrak{t}_2 e_2,\ \mathfrak{t}_1 e_1 - \mathfrak{t}_3 e_3,\ \mathfrak{t}_i e_j + \mathfrak{t}_j e_i}_{i\neq j} \ .
\end{align}
Substituting $e_i$ by $dx^i$ provides the corresponding basis under the isomorphism of $\Omega^2_{v,+}(W^4, \ad E)$ and $\Hom(\Delta_{(u,v)}^\perp, \ad E) \subset \Omega^1(W^4,\ad E)$.
This is the correct decomposition for the action of $\mathfrak{su}(2)_\mathfrak{f}$ on $B$ and its expansions $\phi_\rho$, $b_{\alpha,k}$.
In contrast, for the decomposition of $\Omega^1(W^4,\ad E)$ with respect to the reverse orientation, a corresponding basis of $\tilde{V}_1^\eta$, $\eta \in \{-,0,+\}$, is obtained from the one above by replacing $e_i$ with $dx^{\tau(i)}$.
For example $\tilde{V}_1^- = \spans{\mathfrak{t}_1 dx^1 + \mathfrak{t}_3 dx^2 + \mathfrak{t}_2 dx^3}$.
This is the correct basis for the action of $\mathfrak{su}(2)_\mathfrak{f}$ on $A$ and its expansions $\phi_\rho^\tau$, $a_{\alpha,k}$.
\end{example}

The upcoming lemma and its proof specify the algebraic constraints the Haydys-Witten equations put on the coefficient functions in the polyhomogeneous expansion up to order $y^2$.
See \cite{He2018a} for a similar and more extensive investigation of these constraints in the case of $\pi/2$-Kapustin-Witten solutions with Nahm pole boundary conditions.

As becomes clear during the proof, we have to exclude a finite set of angles at which certain spin-$j$ modes of $\imath_w a$, i.e. the components $(a_{\alpha,k})^{j,w}$, become free parameters.
Explicitly, these values are given by:
\begin{align}
	\cos 2\beta \in \left\{-\tfrac{2j+3}{(j+1)^2},\ \tfrac{2j-1}{j^2},\ -\tfrac{3j^2-4j+3}{j(j+1)^2},\ -\tfrac{3j^2+2j-4}{j^2(j+1)} \right\}_{j\in J}
\end{align}
The statement of the following lemma holds whenever $\beta$ is not of that form.
\begin{lemma} \label{lem:vanishings-expansion-of-nahm-pole-solutions}
Assume $A= y^{-1} \sin\beta\ \phi_\rho^\tau + \omega + a$ and $B = y^{-1} \cos\beta\ \phi_\rho + b$ are a solution of the Haydys-Witten equations on $W^4 \times \R^+_y$ with respect to the non-vanishing vector field $v=\sin\beta w + \cos\beta \del_y$.
Then $\omega$ pulls back under $\phi_\rho$ to the Levi-Civita connection on $TW^4$ and $\nabla^\omega_w \phi_\rho = 0$.
Furthermore, if $\beta$ is generic, then $a_{1,k} =b_{1, k} = a_{2, k} =  b_{2, k} = 0$ for all $k\geq 2$.
The solution is smooth up to the boundary (i.e. $a_{1,1} = b_{1,1}=0$), if and only if the Riemannian curvature satisfies $(\cos\beta \imath_w F_\omega + \imath_w \hodge_4 F_\omega)\restrict{V_1^+/\tilde{V}_1^0} = 0$.
Finally, if $F_\omega = 0$, the polyhomogeneous expansion up to order $y^{2+\delta}$, $\delta>0$, reduces to
\begin{align}
	a &= y \sin\beta\ (C^{1,+})^\tau + y^2 \left( \sin\beta\ C^{2,+} + \cos\beta\ D^{1,-} \right)^\tau + \mathcal{O}(y^{2+\delta}) \\
	b &= y \cos\beta\ C^{1,+} + y^2 \left( \cos\beta\ C^{2,+} - \sin\beta\ D^{1,-} \right) + \mathcal{O}(y^{2+\delta})
\end{align}
where $C^{1,+} \in V_1^+ / \tilde{V}_1^0$, $D^{1,-}\in V_1^-$, and $C^{2,+} \in V_2^+ / \tilde{V}_2^0$ remain unconstrained.
\end{lemma}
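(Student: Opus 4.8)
The plan is to substitute the ansatz $A = y^{-1}\sin\beta\,\phi_\rho^\tau + \omega + a$, $B = y^{-1}\cos\beta\,\phi_\rho + b$ into the Haydys-Witten equations together with the gauge fixing condition, and to read off the resulting tower of constraints order by order in $y$ and $\log y$. At weight $y^{-2}$ the equations are satisfied automatically by the defining properties $\phi_\rho - \sigma(\phi_\rho,\phi_\rho)=0$ and $\phi_\rho^\tau + \sigma(\phi_\rho^\tau,\phi_\rho^\tau)=0$ of the Nahm pole background. The weight $y^{-1}$ (and $y^0$) terms carry the first nontrivial information: isolating the pieces proportional to $\nabla^\omega\phi_\rho$ and $\nabla^\omega\phi_\rho^\tau$ and using that the brackets $[\phi_\rho,\,\cdot\,]$, $[\phi_\rho^\tau,\,\cdot\,]$ are controlled by the Clebsch-Gordan decomposition of $\ad E$ into the $V_j$, one finds, exactly as in the untwisted analysis of Mazzeo and Witten \cite{Mazzeo2013a}, that $\omega$ must reduce, under the $\mathfrak{su}(2)$ induced by $\rho$, to the Levi-Civita connection on $TW^4$, and that $\nabla^\omega_w\phi_\rho = 0$; the $w$-derivative appears precisely because the twisted $v = \sin\beta\,w + \cos\beta\,u$ has a component along $W^4$. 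This establishes the first two assertions.

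For the higher orders I would set up the indicial operator. Writing $a = \sum a_{\alpha,k}\,y^\alpha(\log y)^k$, $b = \sum b_{\alpha,k}\,y^\alpha(\log y)^k$ and linearising about the Nahm pole background, the equations at weight $y^\alpha(\log y)^k$ take the schematic form $I(\alpha)(a_{\alpha,k},b_{\alpha,k}) + (k{+}1)\,I'(\alpha)(a_{\alpha,k+1},b_{\alpha,k+1}) = \mathcal{N}_{\alpha,k}$, where $\mathcal{N}_{\alpha,k}$ collects the background curvature $F_\omega$ together with nonlinear terms built from coefficients of strictly smaller weight. The crucial structural input, arranged exactly by the orientation conventions fixed before the lemma so that $\phi_\rho\in V_1^-$ and $\phi_\rho^\tau\in\tilde V_1^-$, is that $A^{\mathrm{NP}}$ and $B^{\mathrm{NP}}$ are invariant under the diagonal $\mathfrak{su}(2)_\mathfrak{f}$; hence $I(\alpha)$ preserves the decomposition \eqref{eq:vanishings-su2-decomposition} and acts on each summand $V_j^\eta$, $\tilde V_j^\eta$, $V_j^w$ as multiplication by an explicit polynomial in $\alpha$. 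Its roots are the indicial roots, to be computed summand by summand. This is where the list of excluded angles originates: for the $w$-modes $(a_{\alpha,k})^{j,w}$ — present only in the twisted case and coupled to $\imath_v F_A$ and the gauge-fixing equation — the indicial polynomial is $\beta$-dependent, and its roots cross $\alpha=1$ or $\alpha=2$ precisely at the four values of $\cos 2\beta$ displayed before the lemma; away from these, $\alpha=1$ and $\alpha=2$ are the only indicial roots in $(0,2{+}\delta]$ on the summands that survive.

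With the indicial roots in hand the remaining claims are bookkeeping. Since the indicial set is bounded from the left by $1$, there are no coefficients below weight $y$, so $\mathcal{N}_{1,k}$ depends only on the background and is linear in $F_\omega$. The operator $I(1)$ is invertible on every summand except $V_1^+/\tilde V_1^0$, where it has a one-dimensional kernel producing the free parameter $C^{1,+}$, and (after the angle exclusions) is invertible on the $w$-modes; consequently a $y\log y$ term is forced exactly when the $V_1^+/\tilde V_1^0$-component of $\mathcal{N}_{1,0}$ is nonzero, and a short computation identifies this component with $(\cos\beta\,\imath_w F_\omega + \imath_w\hodge_4 F_\omega)\restrict{V_1^+/\tilde V_1^0}$, giving the smoothness criterion; the same argument gives $a_{1,2}=b_{1,2}=0$, hence $a_{1,k}=b_{1,k}=0$ for $k\geq 2$. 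Iterating to weight $y^2$: if $F_\omega=0$ all inhomogeneities through this order vanish — the nonlinear terms built from the single weight-$y$ coefficient $C^{1,+}$ land in the range of $I(2)$ — so no logarithms appear and the weight-$y^2$ coefficient is a combination of the kernel elements of $I(2)$, namely $C^{2,+}\in V_2^+/\tilde V_2^0$ and $D^{1,-}\in V_1^-$; tracking the alignment of the pair $(A,B)$ with $v=\sin\beta\,w+\cos\beta\,u$ (and the orientation-reversing $\tau$-twist on the $A$-side) yields the stated combinations $\sin\beta\,C^{2,+}+\cos\beta\,D^{1,-}$ in $a$ and $\cos\beta\,C^{2,+}-\sin\beta\,D^{1,-}$ in $b$. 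The remainder is $\mathcal{O}(y^{2+\delta})$ with $\delta>0$ the gap to the next indicial root above $2$, which is bounded away from $2$ once the exceptional angles are excluded.

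\textbf{Main obstacle.} The computational heart is the explicit diagonalisation of $I(\alpha)$ in the twisted case. In the untwisted Kapustin-Witten setting of He \cite{He2018a} the connection and Higgs-field equations are only mildly coupled, but here both $\nabla^A_v B$ and $\imath_v F_A$ mix the $y^{-1}\sin\beta\,\phi_\rho^\tau$ and $y^{-1}\cos\beta\,\phi_\rho$ backgrounds, so on a given spin sector $I(\alpha)$ is a genuine matrix whose eigenvalues depend on $\beta$. Extracting the four exceptional values of $\cos 2\beta$, and confirming that away from them $I(1)$ and $I(2)$ are invertible on the $V_j^w$ pieces while having exactly the claimed kernels on $V_1^+/\tilde V_1^0$, $V_1^-$, $V_2^+/\tilde V_2^0$, is the part that requires care; everything else is organised by the representation theory of $\mathfrak{su}(2)_\mathfrak{f}$ and reduces to routine substitution.
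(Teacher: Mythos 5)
Your proposal follows essentially the same route as the paper: substitute the expansions into the gauge-fixed Haydys-Witten equations, organize the order-by-order constraints via the $\mathfrak{su}(2)_\mathfrak{f}$-decomposition of $\ad E$-valued forms, derive the Levi-Civita/parallelism statements at weight $y^{-1}$, and obtain the exceptional angles, the curvature obstruction to smoothness, and the free parameters $C^{1,+}$, $C^{2,+}$, $D^{1,-}$ as the $\beta$-dependent degeneracies and kernels of the linearization at the Nahm pole background — your indicial operator $I(\alpha)$ is exactly the paper's spin-spin operator analysis of the coupled system in $(c_{\alpha,k}, d_{\alpha,k}, \imath_w a_{\alpha,k})$, carried out there case by case on the subspaces $V_j^\eta$, $\tilde V_j^\eta$, $V_j^w$. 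The one caveat is that what you defer as "the part that requires care" — extracting the four families of $\cos 2\beta$ as consistency conditions of that coupled system, and checking (using the mapping properties of $\imath_w d_\omega$ and $\imath_w \hodge_4 d_\omega$ together with the subleading $(\log y)^{k-1}$ equations) that when $F_\omega = 0$ the $d_\omega C^{1,+}$ contributions at weight $y^2$ generate no $y^2\log y$ terms and leave only $C^{2,+}$ and $D^{1,-}$ — constitutes essentially the entire body of the paper's proof, so your outline is structurally faithful but schematic at its computational core.
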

\begin{corollary} \label{cor:vanishings-expansion-of-field-strength}
Denote by $i_y : W^4 \hookrightarrow W^4 \times \R^+_y$ inclusion at $y$.
For generic $\beta$ and if $F_\omega = 0$, the total field strength pulls back to
\begin{align}
	i_y^\ast ( F_A )= {}
		& \phantom{{}+{}} y^{-2} \left( \sin^2\beta\ [ \phi_\rho^\tau \wedge \phi_\rho^\tau ] \right)
		+ y^0  \left( \sin^2\beta\ [\phi_\rho^\tau \wedge (C^{1, +})^\tau] \right) \\
		& + y^1	 \left( \sin\beta\ d_\omega (C^{1,+})^\tau + \sin^2\beta [\phi_\rho^\tau \wedge (C^{2,+})^\tau] + \sin\beta\cos\beta\ [\phi_\rho^\tau \wedge (D^{1,-})^\tau]  \right)
		+ \mathcal{O}(y^{1+\delta})
\end{align}
Meanwhile, the $dx^2$- and $dx^3$- components of $\delta_A^+ J^+ B$ are specified by
\begin{align}
	i_y^\ast ( \delta_A^+ J^+ B ) \propto {}
		& \phantom{{}+{}} y^{-2} \sin\beta\cos\beta \left( [(\phi_\rho^\tau)_3, (\phi_\rho)_1] dx^2 - [(\phi_\rho^\tau)_2, (\phi_\rho)_1] dx^3 \right) \\
		& + y^{0} \sin\beta\cos\beta\
			\begin{aligned}[t]
				\Big(
					&\left( [(\phi_\rho^\tau)_3 , (C^{1,+})_1] + [(C^{1,+})^\tau_3, (\phi_\rho)_1] \right) dx^2 \\
				 - &\left( [(\phi_\rho^\tau)_2 , (C^{1,+})_1] + [(C^{1,+})^\tau_2, (\phi_\rho)_1] \right) dx^3
				\Big)
			\end{aligned}
			\\
		& + y^{1}
			\begin{aligned}[t]
				\Big(
					&\left(\cos\beta\ \nabla^\omega_3 (C^{1,+})_1 + \sin\beta\cos\beta\ [(\phi_\rho^\tau)_3, (C^{2,+})_1] - \sin^2\beta\ [(\phi_\rho^\tau)_3, (D^{1,-})_1] \right) dx^2 \\
					- &\left(\cos\beta\ \nabla^\omega_2 (C^{1,+})_1 + \sin\beta\cos\beta [(\phi_\rho^\tau)_2, (C^{2,+})_1] - \sin^2\beta\ [(\phi_\rho^\tau)_2, (D^{1,-})_1] \right) dx^3
				\Big)
			\end{aligned}
			\\
		&+ \mathcal{O}(y^{1+\delta})
\end{align}
\end{corollary}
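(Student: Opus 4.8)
The plan is to prove the corollary by direct substitution: one plugs the polyhomogeneous expansions of \autoref{lem:vanishings-expansion-of-nahm-pole-solutions} (valid for generic $\beta$ under $F_\omega = 0$) into the defining expressions for $F_A$ and $\delta_A^+$, and then collects terms by powers of $y$, discarding everything of order $\mathcal{O}(y^{1+\delta})$. Throughout I would work in coordinates on $W^4$ adapted to the Nahm pole model, in which --- by the conclusions of the lemma --- the connection $\omega$ is flat, $\phi_\rho$ and $\phi_\rho^\tau$ are $\omega$-parallel (hence have constant coefficients $\mathfrak{t}_i$ in this gauge), and all Christoffel symbols and $\nabla^{\mathrm{LC}}e_i$ vanish at the point under consideration. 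This reduces every covariant derivative to the commutator part $[\,y^{-1}\sin\beta\,\phi_\rho^\tau + a\,,\,\cdot\,]$ together with $\nabla^\omega$ acting on the fluctuation coefficients.

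For $i_y^\ast F_A$ I would write $F_A = F_\omega + d_\omega(A^{\mathrm{NP}} + a) + \tfrac12[(A^{\mathrm{NP}}+a)\wedge(A^{\mathrm{NP}}+a)]$ with $A^{\mathrm{NP}} = y^{-1}\sin\beta\,\phi_\rho^\tau$ and $a$ as in the lemma. The pullback $i_y^\ast$ annihilates all $dy$-components, $F_\omega = 0$ by hypothesis, and $d_\omega A^{\mathrm{NP}} = y^{-1}\sin\beta\, d_\omega\phi_\rho^\tau = 0$ since $\phi_\rho^\tau$ is $\omega$-parallel --- so there is no $y^{-1}$-order contribution. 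What remains is $\tfrac12[A^{\mathrm{NP}}\wedge A^{\mathrm{NP}}]$ at order $y^{-2}$; the cross term $[A^{\mathrm{NP}}\wedge a]$, which, pairing $A^{\mathrm{NP}}\sim y^{-1}$ with the $y\,(C^{1,+})^\tau$- and the $y^2$-coefficients of $a$, contributes at orders $y^0$ and $y^1$; and $d_\omega a$, which contributes $y\sin\beta\, d_\omega (C^{1,+})^\tau$ at order $y^1$. The term $\tfrac12[a\wedge a]$ is already $\mathcal{O}(y^2)$ and absorbed into the remainder. Assembling these yields exactly the three displayed terms.

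For $i_y^\ast(\delta_A^+ J^+ B)$ I would first carry out the projection: since $J$ has eigenvalue $+1$ only on $e_1$, one has $J^+ B = f\, e_1$ with $f$ the $e_1$-coefficient of $B$, and reading this coefficient off $B = y^{-1}\cos\beta\,\phi_\rho + b$ together with the $b$-expansion (using the explicit bases of $V_1^\pm$ and $V_2^+$ from the Example) gives $f = y^{-1}\cos\beta\,(\phi_\rho)_1 + y\cos\beta\,(C^{1,+})_1 + y^2\big(\cos\beta\,(C^{2,+})_1 - \sin\beta\,(D^{1,-})_1\big) + \mathcal{O}(y^{2+\delta})$. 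Applying $\delta_A^+ = -g^{\mu\nu}\iota_\mu\nabla^A_\nu$ to $f\, e_1$ in the adapted frame, the contractions $\iota_\mu e_1$ reduce to multiples of the coordinate one-forms, and one sees that the $dx^2$- and $dx^3$-components are proportional to $\nabla^A_3 f$ and $\nabla^A_2 f$. It then remains to expand $\nabla^A_\mu f = \nabla^\omega_\mu f + y^{-1}\sin\beta\,[(\phi_\rho^\tau)_\mu, f] + [a_\mu, f]$, substitute the expansions of $f$ and $a$, and sort by powers of $y$; the $y^{-2}$- and $y^0$-coefficients drop out directly, while the $y^1$-coefficient requires combining the commutators $[(\phi_\rho^\tau)_\mu, (C^{2,+})_1]$, $[(C^{2,+})^\tau_\mu, (\phi_\rho)_1]$ and the corresponding $D^{1,-}$-terms via the Jacobi identity in $\mathfrak{su}(2)_{\mathfrak{t}}$, using that $C^{1,+}$ and $C^{2,+}$ are only defined modulo $\tilde V_1^0$, respectively $\tilde V_2^0$.

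The step I expect to be the main obstacle is precisely this last piece of bookkeeping: keeping the two orientations --- and hence the two $\mathfrak{su}(2)_{\mathfrak{f}}$-decompositions $V_j^\eta$ versus $\tilde V_j^\eta$ --- consistently separated while performing the $J^+$-projection and the $\tau$-twist, and checking that the potentially present $y^{-1}$-order term in $i_y^\ast F_A$ and the extra commutator terms at order $y^1$ in $i_y^\ast(\delta_A^+ J^+ B)$ either vanish by the algebraic constraints of \autoref{lem:vanishings-expansion-of-nahm-pole-solutions} or lie in the quotiented-out subspaces. Once this representation-theoretic organization is in place, the rest is routine algebra in coordinates.
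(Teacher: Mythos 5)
Your plan — substituting the expansions of \autoref{lem:vanishings-expansion-of-nahm-pole-solutions} into $F_A$ and into $\delta_A^+(B_1 e_1)$, using $F_\omega=0$, $\nabla^\omega\phi_\rho=0$ and the pullback killing $dy$-terms, then collecting powers of $y$ — is exactly how the corollary follows in the paper, which proves it by precisely this bookkeeping on the expansion formulas already displayed in the lemma's proof. The only delicate point, the order-$y^1$ commutators of the $y^2$-coefficient of $a$ with $y^{-1}\cos\beta\,(\phi_\rho)_1$ (the $C^{2,+}$- and $D^{1,-}$-terms you flag), is indeed the one piece of representation-theoretic housekeeping to settle, and since these terms lie in $V_2$ respectively produce elements orthogonal to $\mathfrak{t}_{2},\mathfrak{t}_3$ under the trace, they are immaterial for the subsequent use in \autoref{prop:vanishings-chi-np-asymptotics}.
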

The rest of this section is occupied with the proof of \autoref{lem:vanishings-expansion-of-nahm-pole-solutions}.

\begin{proof}
The proof is analogous to the analysis in \cite{He2018a}, which covers the case $\beta = 0$.
Hence, assume that $\beta \neq 0$, in which case there is an isomorphism $\Omega^2_{v,+}(U, \ad E) \simeq \Hom( \Delta_{(u,v)}^\perp, \ad E)$.
We use this equivalence to first re-express the Haydys-Witten equations~\eqref{eq:vanishings-setting-Haydys-Witten-equations} purely in terms of one-forms.
This simplifies the evaluation of the formulas later on.

Fix a gauge in which  $A_y = 0$ and choose as reference connection $A^0 = y^{-1} \sin\beta\ \phi_\rho^\tau + \omega$, where $\omega$ is the pullback of a connection on $E \to W^4$.
Note that when we view $B$ as an element of $\Hom(\Delta_{(u,v)}^\perp, \ad E) \subset \Omega^1(W^4 \times \R^+_y, \ad E)$, it does not have a $dy$-component: in coordinates $(s,x^i, y)_{i=1,2,3}$ with $v=\sin\beta \del_s + \cos\beta \del_y$, the two-form ${B = \sum_{i=1}^3 \phi_i e_i}$ is identified with the one-form $\sum_{i=1}^3 \phi_i dx^i$.
Since the coefficients $\phi_\rho^\tau$, $\phi_\rho$, $a_{\alpha, k}$ and $b_{\alpha,k}$ are independent of $y$ and as they don't have $dy$-components, we can view them as elements of $\Omega^1(W^4, \ad E)$.
Motivated by this, we slightly abuse notation in the upcoming version of the Haydys-Witten equations~\eqref{eq:vanishings-Haydys-Witten-equations-rewriting-1}~-~\eqref{eq:vanishings-Haydys-Witten-equations-rewriting-4} and treat $A$ and $B$ as one-forms on $W^4\times\{y\}$.
(Specifically, we implicitly take the pullback and drop any $dy$-components in $F_A$ and $d_A B$.)
With that understood, the Haydys-Witten equations can be brought into the form
\begin{align}
	\del_{y\/} A &= - \imath_w \left( d_A B - \sin\beta\ \hodge_4 ( F_A - [B\wedge B] ) - \cos\beta\ \hodge_4 d_A B \right)
		\label{eq:vanishings-Haydys-Witten-equations-rewriting-1} \\
	\del_{y\/} B &= \phantom{\mathrel{-}} \imath_w \left( F_A + \cos\beta\ \hodge_4 ( F_A - [B\wedge B] ) - \sin\beta\ \hodge_4 d_A B \right)
		\label{eq:vanishings-Haydys-Witten-equations-rewriting-2} \\
	\del_{y\/} (\imath_w A) &= \phantom{\mathrel{-}} d_A^{\hodge_4} B
		\label{eq:vanishings-Haydys-Witten-equations-rewriting-3} \\
	0 &= \phantom{\mathrel{-}} d_\omega^{\hodge_4} a + \sin\beta \hodge_4 [ \phi_\rho^\tau \wedge \hodge_4 y^{-1} a] + \cos\beta  \hodge_4 [ \phi_\rho \wedge \hodge_4 y^{-1} b]
		\label{eq:vanishings-Haydys-Witten-equations-rewriting-4}
\end{align}

Plugging the polyhomogeneous expansions of $A$ and $B$ into the terms that appear in these equations yields:
\begin{align}
		F_A = {} &
		y^{-2} \big( \sin^2\beta\ [ \phi_\rho^\tau \wedge \phi_\rho^\tau ] + \sin\beta\ \phi_\rho^\tau \wedge dy \big) \
		+\ y^{-1} \big( \sin\beta\ d_{\omega} \phi_\rho^\tau \big) \
		+\ y^{0}  \big( F_\omega \big) \\[0.5em]
		&{} + \sum_{(\alpha, k) \geq 0} y^\alpha (\log y)^k \left( d_\omega a_{\alpha,k} + \sin\beta [\phi_\rho^\tau \wedge a_{\alpha+1, k}] - (\alpha+1) a_{\alpha+1, k} \wedge dy - (k+1) a_{\alpha+1, k+1} \wedge dy \right)
	\\[1em]
	[B \wedge B ] = {} & {}
		y^{-2} \left( \cos^2\beta\ [\phi_\rho \wedge \phi_\rho] \right)
		+ \sum_{(\alpha,k) \geq 0} y^{\alpha} (\log y)^k \left( \cos\beta [\phi_\rho \wedge b_{\alpha+1, k}] \right)
	\\[1em]
	d_A B = {} &
		y^{-2} \left( \sin\beta\cos\beta\ [\phi_\rho^\tau \wedge \phi_{\rho}] + \cos\beta\ \phi_\rho \wedge dy \right)
		+ y^{-1} \left( \cos\beta\ d_{\omega} \phi_\rho \right) \\
		& + \sum_{(\alpha,k) \geq 0} y^\alpha (\log y)^k
		\begin{multlined}[t][0.67\textwidth]
			\left( d_\omega b_{\alpha, k} + \sin\beta\ [\phi_\rho^\tau \wedge b_{\alpha+1, k}] + \cos\beta\ [\phi_\rho \wedge a_{\alpha+1, k}]
			\right. \\ \left.
			- (\alpha +1) b_{\alpha+1, k} \wedge dy - (k+1) b_{\alpha+1, k+1} \wedge dy \right)
		\end{multlined}
	\\[1em]
	d_A^{\hodge_4} B = {} & {}
		y^{-1} \left( \cos\beta\ d_{\omega}^{\hodge_4} \phi_\rho \right)
		+ \sum_{(\alpha,k) \geq 0} y^\alpha (\log y)^k
		\begin{multlined}[t][0.5\textwidth]
			\left( d_\omega^{\hodge_4} b_{\alpha, k} + \sin\beta\ \hodge_4 [\phi_\rho^\tau \wedge \hodge_4 b_{\alpha+1,k}]
			\right. \\ \left.
			- \cos\beta\ \hodge_4 [\phi_\rho \wedge \hodge_4 a_{\alpha+1, k}] \right)
		\end{multlined}
\end{align}
Here we write $(\alpha,k)\geq 0$ for pairs of exponents with $\Re \alpha \geq 0$.
Using $(\del_y A)_i = -(F_A)_{iy}$ and $(\del_y B)_i = -(d_A B)_{iy}$, we can now read off the contributions to the Haydys-Witten equations from each of these expressions, order by order in $y^{\alpha} (\log y)^k$.

\vspace{0.5em}
\underline{$\mathcal{O}(y^{-2})$}

At order $\mathcal{O}(y^{-2})$ only equations \eqref{eq:vanishings-Haydys-Witten-equations-rewriting-1} and \eqref{eq:vanishings-Haydys-Witten-equations-rewriting-2} are non-trivial.
Using that $\imath_w (\hodge_4 [\phi_\rho^\tau \wedge \phi_\rho]) = \thalf \imath_w ( \hodge_4 [\phi_\rho^\tau \wedge \phi_\rho^\tau] + \hodge_4  [\phi_\rho \wedge \phi_\rho] )$, the Haydys-Witten equations then state:
\begin{align}
	\phi_\rho^\tau &= - \imath_w \left( \hodge_4 [\phi_\rho^\tau \wedge \phi_\rho^\tau] \right) \\
	\phi_\rho &= \phantom{\mathrel{-}} \imath_w \left( \hodge_4 [\phi_\rho \wedge \phi_\rho] \right)
\end{align}
This is satisfied, since under the isomorphism $\Omega^2_{v,+}(U,\ad E) = \Hom(\Delta_{(u,v)}^\perp, \ad E)$ the expression $\imath_w (\hodge_4 [\phi_\rho \wedge \phi_\rho])$ is identified with $\sigma(\phi_\rho, \phi_\rho)$. 
By construction of the Nahm pole boundary condition $\phi_\rho - \sigma(\phi_\rho, \phi_\rho) = 0$, while the same is true for $\phi_\rho^\tau$ with the opposite sign.

\vspace{0.5em}
\underline{$\mathcal{O}(y^{-1})$}

The first additional constraint arises at $\mathcal{O}(y^{-1})$, where we find
\begin{align}
	0	&= \phantom{\mathrel{-}} \imath_w \left( \cos\beta\ d_\omega \phi_\rho - \sin^2\beta\ \hodge_4 d_\omega \phi_\rho^\tau - \cos^2\beta\ \hodge_4 d_\omega \phi_\rho \right) \\
	0	&= - \imath_w \left( \sin\beta\ d_\omega \phi_\rho^\tau + \sin\beta\cos\beta\ \hodge_4 d_\omega \phi_\rho^\tau - \sin\beta\cos\beta\ \hodge_4 d_\omega \phi_\rho \right) \\
	0 &= \cos\beta\ d_\omega^{\hodge_4} \phi_\rho
\end{align}
These equations are satisfied if the tensor\footnotemark~$\nabla^\omega \phi_\rho$ vanishes.
\footnotetext{%
Here, as always, $\nabla^\omega$ is the product connection on~${\Omega^1( W^4\times \R^+_y , \ad E)}$ that is induced by the Levi-Civita connection and the gauge connection $\omega$.
}
This means that $\phi_\rho$ intertwines the Levi-Civita and gauge connection on $TW^4$:
On the one hand, $\omega$ must pull back under $\phi_\rho : \Delta_{(u,v)}^\perp \to \ad E$ to the Levi-Civita connection on $\Delta_{(u,v)}^\perp$.
This also implies that the restriction of $\omega$ to $\Delta_{(u,v)}^\perp \subset TW^4$ is valued in $\rho( \mathfrak{su}(2) ) = V_1 \subset \ad E$.
On the other hand, the $w^\flat$-component of $\omega$ must be such that $\nabla^\omega_w \phi_\rho = 0$, i.e. $\phi_\rho$ extends along $w$ by parallel transport.
Since the Levi-Civita connection acts trivially on $\ad E$, it preserves $V_1 \subset \ad E$ and we find that also $\imath_w \omega \in V_1$.
We note that $\imath_w \hodge_4 F_\omega$ and $\imath_w F_\omega$ are then elements of $V_1^- \oplus V_1^0 \oplus V_1^+$ and are determined by the Riemannian curvature of $W^4$.
Furthermore, the exterior covariant derivative satisfies the following mapping properties (cf. \cite[p.5]{Henningson2012}):
\begin{align}
	\begin{split}
	\imath_w \hodge_4 d_\omega &:
		V_j^w \to 0 \ , \quad V_{j}^- \to V_{j}^0 \ , \quad   V_{j}^0 \to V_{j}^{-} \oplus V_{j}^0 \oplus V_{j}^+  \ , \quad  V_{j}^+ \to V_{j}^0 \oplus V_{j}^+ \\
	\imath_w d_\omega &: V_j^w \to V_j^- \oplus V_j^0 \oplus V_j^+ \ , \quad V_{j}^\eta \to V_{j}^\eta , \ \eta \in \{-,0,+\}
	\end{split}
	\label{eq:vanishings-exterior-derivative-mapping-properties}
\end{align}
With a view at the expansions of $F_A$ and $d_A B$, it becomes clear that these identities will be helpful in the upcoming analysis, starting at order $\mathcal{O}(y^0 (\log y)^k)$.

\vspace{0.5em}
\underline{$\mathcal{O}(y^0 (\log y)^k )$}

Examining the higher order expansions of the right hand side of~\eqref{eq:vanishings-Haydys-Witten-equations-rewriting-1}~-~\eqref{eq:vanishings-Haydys-Witten-equations-rewriting-4}, one finds that the equations will from now on involve terms of the form $\imath_w \hodge_4 [\phi_\rho \wedge \cdot]$ and its analogue with respect to $\phi_\rho^\tau$.
These are global versions of the spin-spin operator $\mathfrak{J}:= \mathfrak{s} \cdot \mathfrak{t}$, as they act pointwise by $\sum_{a=1}^3 \mathfrak{s}_a \otimes \mathfrak{t}_a$.
Indeed, taking into account that rotations act on $a_{\alpha,k}$ and $b_{\alpha,k}$ with opposite orientation one finds
\begin{align}
	\mathfrak{J} a_{\alpha,k} &= - \imath_w \hodge_4 [\phi_\rho^\tau \wedge a_{\alpha,k}] &
	\mathfrak{J} b_{\alpha,k} &= \imath_w \hodge_4 [\phi_\rho \wedge b_{\alpha,k}]
\end{align}
The vector bundles $V_j^\eta$, $\eta \in \{-, 0, + \}$, are eigenspaces of the spin-spin operator with eigenvalues $j+1$, $1$, and $-j$.
Meanwhile, $\mathfrak{J}$ sends $V_j^w$ to zero.
We will also use that $\imath_w [\phi_\rho \wedge \cdot]$ maps $V_j^w \to V_j^0$ and $V_{j}^\eta \to 0$, $\eta \in \{-,0,+\}$.

Equipped with this, we can now proceed to the constraints that arise for the tower of $\log y$ terms at order $y^0$.
Hence, consider equations~\eqref{eq:vanishings-Haydys-Witten-equations-rewriting-1}~-~\eqref{eq:vanishings-Haydys-Witten-equations-rewriting-4} at $\mathcal{O}\big((\log y)^k\big)$, where $k$ is the largest integer such that $(1, k) \in \Delta_0$.
The existence of such a maximal value is part of the defining properties of a polyhomogeneous expansion.
For the moment assume that $k\geq 2$, we will treat the cases $k=1$ and $k=0$ in the end.

We now introduce the following linear combinations of $V_{j}^-\oplus V_j^0 \oplus V_j^+$-parts of $a_{\alpha,k}$ and $b_{\alpha,k}$:
\begin{align}
	c_{\alpha,k} &:= \sin\beta\ a_{\alpha,k}^\tau + \cos\beta\ b_{\alpha,k}\ , \\
	d_{\alpha,k} &:= \cos\beta\ a_{\alpha,k}^\tau - \sin\beta\ b_{\alpha,k}\ .
\end{align}
Note, in particular, that $c_{\alpha,k} $ and $d_{\alpha,k}$ do not include the component $(a_{1,k})^{j,w}$.
Appropriate linear combinations of~\eqref{eq:vanishings-Haydys-Witten-equations-rewriting-3}~and~\eqref{eq:vanishings-Haydys-Witten-equations-rewriting-4} then state
\begin{align}
	\sin\beta\ \imath_w a_{1,k} &= \cos\beta\ \hodge_4 [\phi_\rho \wedge \hodge_4 c_{1,k}] + \sin\beta\ \hodge_4 [\phi_\rho \wedge \hodge_4 d_{1,k} ]  \\
	\cos\beta\ \imath_w a_{1,k} &= \sin\beta\ \hodge_4 [\phi_\rho^\tau \wedge \hodge_4 c_{1,k}] - \cos\beta\ \hodge_4 [\phi_\rho^\tau \wedge \hodge_4 d_{1,k}]
\end{align}
These equations can be brought in a slightly more helpful form.
Note that the operator $[\phi_\rho \wedge \hodge_4 [\phi_\rho \wedge \hodge_4\, \cdot \ ]]$ annihilates $V_j^\pm$ and acts on $V_j^0$ as $\sum_i [\mathfrak{t}_i , [\mathfrak{t}_i, \cdot]]$, which is just (the negative of) the quadratic Casimir on $V_j$.
Applying $[\phi_\rho \wedge \cdot]$ to the equations thus simplifies the operation on the right hand side to a simple multiplication by $-j(j+1)$.
In complete analogy $[\phi_\rho^\tau \wedge \hodge_4 [\phi_\rho^\tau \wedge \hodge_4\, \cdot \ ]]$ annihilates $\tilde{V}_j^\pm$ and acts on $\tilde{V}_j^0$ by multiplication with $-j(j+1)$.
This results in~\eqref{eq:vanishings-logy-3}~and~\eqref{eq:vanishings-logy-4} below.
Including appropriate linear combinations of \eqref{eq:vanishings-Haydys-Witten-equations-rewriting-1}~and~\eqref{eq:vanishings-Haydys-Witten-equations-rewriting-2}, the coefficient functions in the polyhomogeneous expansion have to be compatible with the following four conditions.
\begin{align}
	c_{1,k} + \mathfrak{J} c_{1,k}
		&= \phantom{\mathrel{-}} \sin\beta\cos\beta\ \left( \imath_w  [\phi_\rho^\tau \wedge a_{1,k}] - \imath_w [\phi_\rho \wedge a_{1,k}] \right) \label{eq:vanishings-logy-1} \\
	d_{1,k} - \mathfrak{J} d_{1,k}
		&= - \cos^2\beta\ \imath_w [\phi_\rho^\tau \wedge a_{1,k}] - \sin^2\beta\ \imath_w [\phi_\rho \wedge a_{1,k}] \label{eq:vanishings-logy-2} \\
	j(j+1) \left( \cos\beta\ c_{1,k} + \sin\beta\ d_{1,k} \right)^{j,0} &= \phantom{\mathrel{-}} \sin\beta\ \imath_w [\phi_\rho \wedge a_{1,k}] \label{eq:vanishings-logy-3} \\
	j(j+1) \left( \sin\beta\ c_{1,k} - \cos\beta\ d_{1,k} \right)^{\widetilde{j,0}} &= \phantom{\mathrel{-}} \cos\beta\ \imath_w [\phi_\rho^\tau \wedge a_{1,k}] \label{eq:vanishings-logy-4}
\end{align}
Observe that the terms on the left only depend on $c_{1,k}$ and $d_{1,k}$ and thus only on the $V_j^- \oplus V_j^0 \oplus V_j^+$ components of $a_{1,k}$ and $b_{1,k}$.
Meanwhile, the right hand side only depends on the remaining $V_j^w$ component $(a_{1,k})^{j,w}$.
Moreover, all terms on the right are necessarily located in $V_j^0 + \tilde{V}_j^0$ (fiberwise sum of subspaces).
To solve the equations we distinguish between the following six cases:
\begin{multicols}{3}
\begin{enumerate}[label=(\Roman*)]
	\item $V_j^- / \tilde{V}_j^0$
	\item $V_j^+ / \tilde{V}_j^0$
	\item $V_j^- \cap \tilde{V}_j^0$
	\item $V_j^+ \cap \tilde{V}_j^0$
	\item $V_j^0 / \tilde{V}_j^0$
	\item $V_j^0 \cap \tilde{V}_j^0$
\end{enumerate}
\end{multicols}
\vspace*{-0.7\baselineskip}
In fact, a combination of the metric and Killing form provides an inner product on each $V_j^\eta$, such that we can (and do) identify the quotient spaces with the orthogonal complement of $\tilde{V}_j^0$ inside of $V_j^\eta$ (but reflecting this in the notation only adds unnecessary complexity).
In the upcoming paragraphs we will write $\alpha\restrict{\mathrm{(I)}}$ to denote restriction to the subspace specified in case~(I), et cetera.
Let us note at this point that $j=1$ is slightly special, since $V_1^- \cap \tilde{V}_1^0 = \{0\}$.

When we restrict~\eqref{eq:vanishings-logy-1}~-~\eqref{eq:vanishings-logy-4} to either of the subspaces in (I) or (II), all terms on the right hand side vanish.
In particular, \eqref{eq:vanishings-logy-1}~and~\eqref{eq:vanishings-logy-2} reduce to simple eigenvalue equations.
Since $\mathfrak{J}$ acts on $V_j^-$ with eigenvalue $j+1$ (which is strictly larger than $1$), we immediately find $c_{1,k}\restrict{\mathrm{(I)}} = 0 = d_{1,k}\restrict{\mathrm{(I)}}$ for any $j$.
Similarly, since $\mathfrak{J}$ acts on $V_j^+$ with eigenvalue $-j$, we get $c_{1,k}\restrict{\mathrm{(II)}} = 0$, except perhaps when $j = 1$, while $d_{1,k}\restrict{\mathrm{(II)}} = 0$ in any case.
For the $j=1$ component of $c_{1,k}\restrict{\mathrm{(II)}}$ we can rely on the analogue of~\eqref{eq:vanishings-logy-1} at the subleading order $\mathcal{O}\big((\log y)^{k-1}\big)$, which states
\begin{align}
	k c_{1,k} + c_{1,k-1} + \mathfrak{J} c_{1,k-1} &= \sin\beta\cos\beta\ \left( \imath_w  [\phi_\rho^\tau \wedge a_{1,k-1}] - \imath_w [\phi_\rho \wedge a_{1,k-1}] \right)
	\label{eq:vanishings-logy-1-subleading}
\end{align}
Upon restriction to $V_1^+ / \tilde{V}_1^0$ this reduces to $k c_{1,k}\restrict{\mathrm{(II)}} = 0$.

Restriction to the subspaces in (III) and (IV) works out slightly differently, because there the restriction of ${\imath_w [\phi_\rho^\tau \wedge a_{1,k}]}$ doesn't vanish (though we still have vanishing of $\imath_w [\phi_\rho \wedge a_{1,k}]$).
Concentrate on case (III) first, where $J$ has eigenvalue $j+1$.
We find that~\eqref{eq:vanishings-logy-1}~and~\eqref{eq:vanishings-logy-2} determine ${c_{1,k}\restrict{\mathrm{(III)}} = \frac{\sin\beta\cos\beta}{j+2} (\imath_w [\phi_\rho^\tau \wedge a])^{j,-}}$ and ${d_{1,k}\restrict{\mathrm{(III)}} = \frac{\cos^2\beta}{j} (\imath_w [\phi_\rho^\tau \wedge a])^{j,-}}$.
However, when plugging this into~\eqref{eq:vanishings-logy-4}, we get the following consistency condition:
\begin{align}
	j(j+1) \left( \frac{\sin^2\beta\cos\beta}{j+2} - \frac{\cos^3\beta}{j} \right) (\imath_w [\phi_\rho^\tau \wedge a_{1,k}])^{j,-} = - \cos\beta (\imath_w [\phi_\rho^\tau \wedge a_{1,k}])^{j,-}
\end{align}
This can only be satisfied if $\cos 2\beta  = - \frac{2j+3}{(j+1)^2}$ or if $(\imath_w [\phi_\rho^\tau \wedge a_{1,k}])^{j,-} = 0$.
Interestingly, for any $j\geq 2$ there exists a single value of $\beta \in (0,\pi/2)$ for which the first equation is satisfied.
Therefore, whenever $\beta$ is related to one of the spins $j$ in this way, both $(c_{1,k})^{j,-}$ and $(d_{1,k})^{j,-}$ are given as stated above by a multiple of $(\imath_w [\phi_\rho^\tau \wedge a_{1,k}])^{j,-}$.
For generic values of $\beta$, however, we find that $(\imath_w [\phi_\rho^\tau \wedge a_{1,k}])^{j,-}$ vanishes, and thus also $c_{1,k}\restrict{\mathrm{(III)}} = d_{1,k}\restrict{\mathrm{(III)}} = 0$.

For case (IV), recall that $J$ acts with eigenvalue $-j$ on $V_j^+$.
If $j=1$, equation~\eqref{eq:vanishings-logy-1} directly yields ${\imath_w [\phi_\rho^\tau \wedge a_{1,k}]\restrict{\mathrm{(IV)}} = 0}$, such that $d_{1,k}\restrict{\mathrm{(IV)}}= 0$ by~\eqref{eq:vanishings-logy-2} and $c_{1,k}\restrict{\mathrm{(IV)}} = 0$ by~\eqref{eq:vanishings-logy-4}.
Otherwise, for $j\geq 2$, equations~\eqref{eq:vanishings-logy-1}~and~\eqref{eq:vanishings-logy-2} specify $c_{1,k}\restrict{\mathrm{(IV)}}= \frac{\sin\beta\cos\beta}{1-j} \imath_w[\phi_\rho^\tau \wedge a_{1,k}]\restrict{\mathrm{(IV)}}$ and $d_{1,k}\restrict{\mathrm{(IV)}}= -\frac{\cos^2\beta}{1+j} \imath_w[\phi_\rho^\tau \wedge a_{1,k}]\restrict{\mathrm{(IV)}}$.
Plugging these expressions into \eqref{eq:vanishings-logy-4} now leads to the condition $\cos 2\beta = \frac{2j-1}{j^2}$ if $j\geq 2$.
In this condition, we can also allow $j=1$ as input, since the corresponding solution is $\beta = 0$, which we exclude anyway.
If $\beta$ coincides with one of these special values, $(c_{1,k})^{j,+}$ and $(d_{1,k})^{j,+}$ are determined as above in terms of $(\imath_w[\phi_\rho^\tau \wedge a_{1,k}])^{j,+}$.
For generic values of $\beta$, we have $(\imath_w [\phi_\rho^\tau \wedge a_{1,k}])^{j,+}= c_{1,k}\restrict{\mathrm{(IV)}} = d_{1,k}\restrict{\mathrm{(IV)}} = 0$

Case (V) is comparatively simple.
Observe that $\imath_w[\phi_\rho^\tau \wedge a_{1,k}]\restrict{\mathrm{(V)}} = 0$ and that $J$ acts with eigenvalue $1$.
This means that~\eqref{eq:vanishings-logy-2} immediately provides $\imath_w [\phi_\rho \wedge a_{1,k}]\restrict{\mathrm{(V)}} = 0$, such that~\eqref{eq:vanishings-logy-1} states $c_{1,k}\restrict{\mathrm{(V)}} = 0$ and as a consequence~\eqref{eq:vanishings-logy-3} yields $d_{1,k}\restrict{\mathrm{(V)}} = 0$.

Finally, we arrive at case (VI), where neither $\imath_w [\phi_\rho^\tau \wedge a_{1,k}]\restrict{\mathrm{(VI)}}$ nor $\imath_w [\phi_\rho \wedge a_{1,k}]\restrict{\mathrm{(VI)}}$ vanish.
Let us first consider generic values of $\beta$, where we already know that $(\imath_w[\phi_\rho^\tau \wedge a_{1,k}])^{j,\pm} = 0$.
Using ${(\imath_w[\phi_\rho \wedge a_{1,k}])^\tau} = {\imath_w[\phi_\rho^\tau \wedge a_{1,k}]}$ and that acting with $\tau$ twice is the identity map, we can then deduce that $\imath_w[\phi_\rho \wedge a_{1,k}]$ decomposes into a sum of $\tau$-eigenvectors inside of $V_j^0 \cap \tilde{V}_j^0$ with eigenvalues $\pm 1$.
Plugging this decomposition into~\eqref{eq:vanishings-logy-1}~-~\eqref{eq:vanishings-logy-4} then shows that there can only be non-trivial solutions if $j=1$ and $\beta=\pi/2$.
Since we excluded $\beta = \pi/2$ already in the definition of the Nahm pole boundary condition, we find that $(a_{1,k})^{j,w} = (c_{1,k})^{j,0} = (d_{1,k})^{j,0} = 0$ for all $j$.

Still in case $(VI)$, but when $\beta$ is one of the special values $\cos 2\beta = -\frac{2j+3}{(j+1)^2}$ or $\frac{2j-1}{j^2}$, it is no longer assured that $(\imath_w [\phi_\rho^\tau \wedge a_{1,k}])^{j,\pm}$ vanishes.
As a result we can't decompose it into $\pm$-eigenparts of $\tau$ within $V_j^0 \cap \tilde{V}_j^0$.
In this situation equations~\eqref{eq:vanishings-logy-1}~-~\eqref{eq:vanishings-logy-4} instead provide  $(c_{1,k})^{j,0} = - \tan\beta\ \imath_w [\phi_\rho \wedge a_{1,k}]$ and $(d_{1,k})^{j,0} = \imath_w [\phi_\rho \wedge a_{1,k}]$.

By way of an intermediate conclusion, it can thus be stated that for generic $\beta$ and $k\geq 2$, the functions $a_{1,k}$ and $b_{1,k}$ vanish.
Meanwhile, if $\beta$ is one of the (finitely many) special angles determined by $\cos 2\beta = - \frac{2j+3}{(j+1)^2}$ or $\frac{2j-1}{j^2}$, their spin $j$ components are not necessarily zero and are determined -- via $a_{1,k}^\tau = \sin\beta\ c_{1,k} + \cos\beta\ d_{1,k}$ and $b_{1,k} = \cos\beta\ c_{1,k} - \sin\beta d_{1,k}$ -- by the following expressions
\begin{align}
\begin{split}
	c_{1,k} &= \frac{\sin\beta\cos\beta}{j+2} \left( \imath_w [\phi_\rho^\tau \wedge a_{1,k}] \right)^{j,-} - \tan\beta\ \left( \imath_w [\phi_\rho \wedge a_{1,k}] \right)^{j,0} - \frac{\sin\beta\cos\beta}{j-1} \left( \imath_w [\phi_\rho^\tau \wedge a_{1,k}] \right)^{j,+} \\
	d_{1,k} &= \frac{\cos^2\beta}{j} \left( \imath_w [\phi_\rho^\tau \wedge a_{1,k}] \right)^{j,-} + \left( \imath_w [\phi_\rho \wedge a_{1,k}] \right)^{j,0} - \frac{\cos^2\beta}{j+1} \left( \imath_w [\phi_\rho^\tau \wedge a_{1,k}] \right)^{j,+}
\end{split}
\label{eq:vanishings-logy-result1}
\end{align}
The lower order terms $a_{1,k^\prime}$ and $b_{1,k^\prime}$, $0 \leq k^\prime <k$  are then determined by induction.
Let us stress that the possible values of $j\geq 2$ are determined by the (finitely many) spins $j\in J$ that appear in the decomposition of $\ad E$ under the action of $\mathfrak{su}(2)_\mathfrak{s}$.

As an aside, note that all components are determined by the free parameters $\{ (a_{1,k})^{j,w},\ (a_{1,k-1})^{j,w}, \ldots \}$, which describe the $y$-dependence of $\imath_w A$, i.e. the $w^\flat$-component of the gauge connection.
While our current analysis cannot determine the maximal power of $(\log y)^k$, this is in general controlled by the geometry and topology of $E\to M^5$.
For example, if the boundary is of the form $W^4  = \R \times X^3$ and $w$ is the vector field parallel to the real line, then upon dimensional reduction $\imath_w A$ would be identified with the $dy$-component of a one-form $\phi$ that satisfies the Kapustin-Witten equations.
In that situation a well-known vanishing theorem states that $\imath_w A$ vanishes if $A+i\phi$ approaches an irreducible flat connection at $y\to\infty$.
In that way, the regularity of twisted Nahm pole solutions is further controlled by global data, away from the boundary.

The arguments above break down at $k=1$, where in case (II) it is no longer possible to deduce that the $V_1^+ / \tilde{V}_1^0$ part of $c_{1,1}$ vanishes.
This is because for this component we needed to rely on the subleading order of~\eqref{eq:vanishings-Haydys-Witten-equations-rewriting-1}~and~\eqref{eq:vanishings-Haydys-Witten-equations-rewriting-2}.
For $c_{1,1}\restrict{\mathrm{(II)}}$ this means that we have to look at the equations at order $\mathcal{O}(1)$, which receive an additional contribution from $F_\omega$:
\begin{align}
	c_{1,1} + c_{1,0} + \mathfrak{J} c_{1,0}
	&= \phantom{\mathrel{-}} \cos\beta\ \imath_w F_\omega + \imath_w \hodge_4 F_\omega + \sin\beta\cos\beta \left( \imath_w  [\phi_\rho^\tau \wedge a_{1,0}] - \imath_w [\phi_\rho \wedge a_{1,0}] \right) \\
	d_{1,0} - \mathfrak{J} d_{1,0}
	&= - \sin\beta\ \imath_w F_{\omega}^\tau - \cos^2\beta\ \imath_w [\phi_\rho^\tau \wedge a_{1,0}] - \sin^2\beta\ \imath_w  [\phi_\rho \wedge a_{1,0}]
\end{align}
Since the terms that contain $F_\omega$ are located in $V_1^- \oplus V_1^0 \oplus V_1^+$, only these parts of $c_{1,1}$, $c_{1,0}$ and $d_{1,0}$ are affected by curvature contributions.
Note that the special values of $\beta$ are related to spins $j\geq 2$, so there are no additional contributions to the $V_1^- \oplus V_1^0 \oplus V_1^+$ part of the equations.
By restriction of the $\mathcal{O}(1)$ equations to $V_1^-$, $V_1^0$, and $V_1^+$ (remembering that $V_1^- \cap \tilde{V}_1^0 = \{ 0\}$) we thus obtain
\begin{align}
\begin{split}
	c_{1,1} =
		& \ \left( \cos\beta\ \imath_w F_\omega + \imath_w \hodge_4 F_\omega \right)\restrict{V_1^+ / \tilde{V}_1^0} \\[1em]
	c_{1,0} =
		&\phantom{{}+{}} \tfrac{1}{3} \left(\cos\beta\ \imath_w F_\omega + \imath_w \hodge_4 F_\omega \right)^{1,-} \\
		&+ \tfrac{1}{2} \left(\cos\beta\ \imath_w F_\omega + \imath_w \hodge_4 F_\omega + \sin\beta\cos\beta \left( \imath_w  [\phi_\rho^\tau \wedge a_{1,0}] - \imath_w [\phi_\rho \wedge a_{1,0}] \right) \right)^{1,0} \\
		&- \tfrac{1}{2} \left(\cos\beta\ \imath_w F_\omega^\tau + \sin\beta\cos\beta\ \imath_w [\phi_\rho^\tau \wedge a_{1,0}] \right)\restrict{V_1^+ \cap \tilde{V}_1^0}
	 	 +  C^{1,+}
	\\[1em]
	d_{1,0} =
		&\ \left(\sin\beta\ \imath_w F_\omega^\tau \right)^{1,-}
		+ \tfrac{1}{2} \left( \imath_w [\phi_\rho \wedge a_{1,0}] - \cot\beta\ c_{1,0} \right)^{1,0}
		- \tfrac{1}{2} \left( \sin\beta\ \imath_w F_\omega^\tau + \cos^2\beta\ \imath_w [\phi_\rho^\tau \wedge a_{1,0}] \right)^{1,+}
\end{split}
\label{eq:vanishings-logy-result2}
\end{align}
Here, $C^{1,+} \in V_1^+ / \tilde V_1^0$ is some undetermined "integration constant" that cannot be fixed by the current analysis.
In contrast, $\imath_w[\phi_\rho \wedge a_{1,0}]$ and $\imath_w [\phi_\rho^\tau \wedge a_{1,0}]$ are further restricted by the $\mathcal{O}(1)$ versions of~\eqref{eq:vanishings-logy-3}~and~\eqref{eq:vanishings-logy-4} (which remain unchanged) and consequently depend explicitly on $F_\omega$.
Indeed, if the $F_\omega$ contributions to the $\mathcal{O}(1)$ equations vanish, our earlier arguments show that, apart from $c_{1,0} = C^{1,+}$, all coefficient functions vanish.

Finally, if $\beta$ is one of the special angles where $\cos 2\beta = -\frac{2j+3}{(j+1)^2}$ or $\frac{2j-1}{j^2}$, then the results for $c_{1,1}$, $c_{1,0}$ and $d_{1,0}$ differ only by additionally including the terms induced by~\eqref{eq:vanishings-logy-result1}, which only appear for spins $j\neq 1$.

The main take away of this discussion is that, for generic $\beta$ and in absence of curvature contributions, the functions $a_{1,k}$ and $b_{1,k}$ vanish for all $k\geq 1$, while they are proportional to $C^{1,+} \in V_1^+/ \tilde{V}_1^0$ for $k=0$.

\vspace{0.5em}
\underline{$\mathcal{O}(y^1 (\log y)^k )$}\\
For the constraints at order $\mathcal{O}(y (\log y)^k)$ the analysis is very similar.
The main difference is that the equations now incorporate terms of the form $d_\omega c_{1,k}$.
As before, let $k$ be the largest integer for which $(2,k) \in \Delta_0$.
The Haydys-Witten equations \eqref{eq:vanishings-Haydys-Witten-equations-rewriting-1}~-~\eqref{eq:vanishings-Haydys-Witten-equations-rewriting-4} now become
\begin{align}
	\left(2 c_{2,k} + \mathfrak{J} c_{2,k} \right) - \imath_w d_\omega d_{1,k}^\tau - \phantom{(} \imath_w \hodge_4 d_\omega a_{1,k} \phantom{){}^\tau} \
		&= \phantom{\mathrel{-}} \sin\beta\cos\beta\ \left( \imath_w  [\phi_\rho^\tau \wedge a_{2,k}] - \imath_w [\phi_\rho \wedge a_{2,k}] \right)  \\
	\left(2  d_{2,k} - \mathfrak{J} d_{2,k} \right) + \imath_w d_\omega c_{1,k}^\tau  - \left( \imath_w \hodge_4 d_\omega b_{1,k} \right){}^\tau
		&= - \cos^2\beta\ \imath_w [\phi_\rho^\tau \wedge a_{2,k}] - \sin^2\beta\ \imath_w [\phi_\rho \wedge a_{2,k}] \\
	j(j+1) \left( \cos\beta\ c_{2,k} + \sin\beta\ d_{2,k} \right)^{j,0} &= \phantom{\mathrel{-}} \sin\beta\ \imath_w [\phi_\rho \wedge a_{2,k}] \\
	j(j+1) \left( \sin\beta\ c_{2,k} - \cos\beta\ d_{2,k} \right)^{\widetilde{j,0}} &= \phantom{\mathrel{-}} \cos\beta\ \imath_w [\phi_\rho^\tau \wedge a_{2,k}]
\end{align}
We have again arranged this such that the right hand side is contained in $V_j^0 + \tilde{V}_j^0$.
Moreover, we know that for generic values of $\beta$ and any $k\geq 2$, the functions $c_{1,k}$ and $d_{1,k}$ vanish completely, such that the derivatives on the left hand side disappear.
In that situation essentially the same arguments as before show that $(a_{2,k})^{j,w} = c_{2,k} = d_{2,k} = 0$, as long as we additionally exclude the special angles determined by $\cos 2\beta = -\frac{3j^2-4j+3}{j(j+1)^2}$ or $-\frac{3j^2+2j-4}{j^2(j+1)}$.

In carrying out these arguments, we need to rely on the subleading equations at order $\mathcal{O}(y (\log y)^{k-1})$ again.
Here, the subleading equations provide conditions for the $V_2^+/ \tilde{V}_2^0$ part of $c_{2,k}$ and the $V_1^-/\tilde{V}_1^0$ ($= V_1^-$) part of $d_{2,k}$, which are projected out from the equations at order $\mathcal{O}(y (\log y)^k)$.
Specifically, the equation for the $c$-terms at order $\mathcal{O}(y (\log y)^{k-1})$ is
\begin{align}
\begin{multlined}[0.8\textwidth]
	\left( k c_{2,k} + 2 c_{2,k-1} + \mathfrak{J} c_{2,k-1} \right) - \imath_w d_\omega d_{1,k-1}^\tau - \imath_w \hodge_4 d_\omega a_{1,k-1} \\
		= \sin\beta\cos\beta \left( \imath_w  [\phi_\rho^\tau \wedge a_{2,k-1}] - \imath_w [\phi_\rho \wedge a_{2,k-1}] \right)
\end{multlined}
\end{align}
The $V_2^+/\tilde{V}_2^0$ part of this equation simply states $k c_{2,k}\restrict{\mathrm{(II)}} = 0$ for all $k\geq 1$ (for $k=1,2$ notice that $c_{1,k}$ and $a_{1,k}$ are elements of $V_1^- \oplus V_1^0 \oplus V_1^+$).
Similarly, the equation for the $d$ terms at order $\mathcal{O}(y (\log y)^{k-1})$ states
\begin{align}
\begin{multlined}[0.8\textwidth]
	\left(k d_{2,k} + 2 d_{2,k-1} - \mathfrak{J} d_{2,k-1} \right) + \imath_w d_\omega c_{1,k-1}^\tau - (\imath_w \hodge_4 d_\omega b_{1,k-1})^\tau \\
		= - \cos^2\beta\ \imath_w [\phi_\rho \wedge a_{2,k-1}] - \sin^2\beta\ \imath_w [\phi_\rho^\tau \wedge a_{2,k-1}]
\end{multlined}
\end{align}
For $k>2$ the $V_1^-$ part of this equation simply states $(k d_{2,k})^{1,-} = 0$.
For $k=2$ we instead have $(2 d_{2,2}^\tau + \imath_w d_\omega c_{1,1} - \imath_w \hodge_4 d_\omega b_{1,1})^{1,-} = 0$.
Note that $b_{1,1} = \cos\beta\ c_{1,1}$ is an element of $V_1^+/ \tilde{V}_1^0$.
By comparison with the mapping properties~\eqref{eq:vanishings-exterior-derivative-mapping-properties}, we conclude that also $(d_{2,2})^{1,-} = 0$.

The equations at order $\mathcal{O}(y (\log y))$ and $\mathcal{O}(y)$ involve derivatives of $c_{1,1}$, $c_{1,0}$ and $d_{1,0}$, which are all contained in $V_1^- \oplus V_1^0 \oplus V_1^+$, so only these parts of $c_{2,1}$, $d_{2,1}$, $c_{2,0}$ and $d_{2,0}$ may be non-zero.
By restricting the equations to the various possible subspaces (and since $V_1^- \cap \tilde{V}_1^0 = \{0\}$) one obtains
\begin{align}
	c_{2,1} =
		&\phantom{{}+{}} \tfrac{1}{3} \left( \sin\beta\ \imath_w \hodge_4 d_\omega c_{1,1} + \sin\beta\cos\beta\ (\imath_w [\phi_\rho^\tau \wedge a_{2,1}] - \imath_w [\phi_\rho \wedge a_{2,1}]) \right)^{1,0} \\
		&+ \phantom{\tfrac{1}{3}} \left( \sin\beta\ \imath_w \hodge_4 d_\omega c_{1,1} + \sin\beta\cos\beta\ \imath_w [\phi_\rho^\tau \wedge a_{2,1}] \right)^{1,+}
	\\[0.5em]
	d_{2,1} =
	& \phantom{{}+{}} \left( - \imath_w d_\omega c_{1,0}^\tau + (\imath_w \hodge_4 d_\omega b_{1,0})^\tau  \right)^{1,-} \\
	& + \left( - \imath_w d_\omega c_{1,1}^\tau  + (\imath_w \hodge_4 d_\omega b_{1,1})^\tau - \cos^2\beta\ \imath_w [\phi_\rho^\tau \wedge a_{2,1}] - \sin^2\beta\ \imath_w [\phi_\rho \wedge a_{2,1} \right)^{1,0} \\
	& + \tfrac{1}{3} \left( - \imath_w d_\omega c_{1,1}^\tau + (\imath_w \hodge_4 d_\omega b_{1,1})^\tau - \sin^2\beta\ \imath_w [\phi_\rho^\tau \wedge a_{2,1}] \right)^{1,+}
\end{align}
Since the $\mathcal{O}(y \log y)$ terms are non-zero, they appear in the $\mathcal{O}(y)$ equations and, accordingly, in the following result for $c_{2,0}$ an $d_{2,0}$:
\begin{align}
	c_{2,0} =
		& \phantom{{}+{}} \tfrac{1}{4} \left( \imath_w d_\omega d_{1,0}^\tau + \imath_w \hodge_4 d_\omega a_{1,0} \right)^{1,-} \\
		& + \tfrac{1}{3} \left( \imath_w d_\omega d_{1,0}^\tau + \imath_w \hodge_4 d_\omega a_{1,0} + \sin\beta\cos\beta\ \left( \imath_w  [\phi_\rho^\tau \wedge a_{1,0}] - \imath_w [\phi_\rho \wedge a_{1,0}] \right) - c_{2,1} \right)^{1,0} \\
		& + \phantom{\tfrac{1}{3}} \left( \imath_w d_\omega d_{1,0}^\tau + \imath_w \hodge_4 d_\omega a_{1,0} + \sin\beta\cos\beta\ \imath_w  [\phi_\rho^\tau \wedge a_{1,0}] - c_{2,1}  \right)^{1,+}
		+ C^{2,+}
	\\[0.5em]
	d_{2,0} =
		&\phantom{+} D^{1,-}
		+ \left( - \imath_w d_\omega c_{1,0}^\tau  + \left( \imath_w \hodge_4 d_\omega b_{1,0} \right){}^\tau - \cos^2\beta\ \imath_w [\phi_\rho^\tau \wedge a_{2,0}] - \sin^2\beta\ \imath_w [\phi_\rho \wedge a_{2,0}] - c_{2,1} \right)^{1,0} \\
		&+ \tfrac{1}{3} \left( - \imath_w d_\omega c_{1,0}^\tau  + \left( \imath_w \hodge_4 d_\omega b_{1,0} \right){}^\tau - \cos^2\beta\ \imath_w [\phi_\rho^\tau \wedge a_{2,0}] - c_{2,1} \right)^{1,+}
\end{align}
Here, $C^{2,+} \in V_2^+ / \tilde{V}_2^0$ and $D^{1,-} \in V_1^-$ remain undetermined.
While we do not reproduce explicit formulas, it is easy to see that $\imath_w [\phi_\rho \wedge a_{2,k}]$ and $\imath_w [\phi_\rho^\tau \wedge a_{2,k}]$ are fully determined by the bottom two Haydys-Witten equations.

The main conclusion of this part is that for generic $\beta$ and in absence of curvature contributions to the $\mathcal{O}(1)$ equations (such that $a_{1,k} = b_{1,k} = 0$ for $k\geq 1$) all components of $c_{2,k}$ and $d_{2,k}$ vanish, except for the functions $C^{2,+}$ and $D^{1,-}$ that appear at $k=0$.
\end{proof}


\section{Asymptotics of the Boundary Term}
\label{sec:vanishings-asymptotics-of-chi}

We now determine the asymptotic behaviour of $\chi = \chi_1 + \chi_2$ at cylindrical ends of $M^5$, where we recall from \autoref{lem:vanishings-weitzenböck} that
\begin{align}
	\chi_1 &= - 2 \Tr \left( F_A \wedge J^- B \right)\ ,  \\
	\chi_2 &= - 2 \Tr \left( \delta_A^+ J^+ B \wedge J^- B \wedge \eta \right)\ .
\end{align}
In doing so, we assume that we evaluate $\chi$ for a Haydys-Witten solution, which is the situation we are interested in for the Weitzenböck formula.

Since we eventually need to pullback $\chi$ to the boundary, it is important to understand the interplay between the vector subbundles $\ker \eta$ and $T (\del_i M)$, with a particular focus on the properties of $J$.
Thus, let $U = [0,1)_{s_i} \times \del_i M^5$ be a cylindrical end and let $u$ denote the inward-pointing unit normal vector.
This means that near a compact end $u=\del_{s_i}$, while at a non-compact end $u={s_i}\del_{s_i}$.
Assume that $g(u,v) = \cos\theta$ is constant on all of $U$.

If $\theta = 0$, $u$ and $v$ are parallel and $\ker \eta = T \del_i M$.
In this situation $J$ simply corresponds to an almost Hermitian structure on $\del_i M$.
If $\theta \neq 0$, $u$ and $v$ are linearly-independent, non-vanishing vector fields on the tubular neighbourhood and span a regular distribution $\Delta_{(u,v)} \subset TU$ of rank two.
Since $\ker\eta$ has rank four, the two distributions intersect in a line bundle $L \subset TU$.
At each point, $L$ specifies a direction in the $(u,v)$-plane that is perpendicular to $v$.
We denote the generating unit vector field by $v^\perp$ and fix its orientation such that $g(u,v^\perp) = -\sin\theta$, such that it points into the interior.
The vector fields are related by $v=\sin\theta w + \cos\theta u$ and $v^\perp = \cos\theta w - \sin\theta u$.

Starting from $v^\perp$, we can always find a local basis of $\ker \eta$ that interacts nicely with the almost complex structure $J$.
It is given by $\{v^\perp , w_1=J v^\perp, w_2, w_3=J w_2\}$, where $w_2$ is some local section of $T\del_i M$ that is orthogonal to $v$, $v^\perp$, and $J v^\perp$.
This induces an associated basis of $\Omega^2_{v,+}(U)$, given by
\begin{align}
	e_i = \eta^\perp \wedge w_i^\flat + \thalf \epsilon_{ijk} w_j^\flat \wedge w_k^\flat
\end{align}
and that satisfies $J e_1 = + e_1$ and $Je_{2/3} = - e_{2/3}$.

We start our investigation of the asymptotics of $\chi$ with Nahm pole boundaries $\del_{\mathrm{NP}} M$ in \autoref{sec:vanishings-nahm-pole-boundaries}.
Though we only establish \autoref{bigthm:decoupling} in the context of pure Nahm pole boundary conditions, we also include a short discussion of the expected behaviour of $\chi$ near knot boundaries $\del_K M$ in \autoref{sec:vanishings-knot-singularity-boundary}.
Subsequently, we discuss Kapustin-Witten ends with either finite energy or Nahm poles in \autoref{sec:vanishings-nagy-oliveira-asymptotic-ends}.


\subsection{Nahm Pole Boundaries}
\label{sec:vanishings-nahm-pole-boundaries}

Consider a Nahm pole boundary component $\del_{\mathrm{NP}} M^5$.
Let $W^4 \times [0,1)_y$ denote a tubular neighbourhood, where $W^4$ is a complete Riemannian manifold without boundary, and write $\mu_{W^4}$ for the induced volume form.
Write $u = \del_y$ for the inward-pointing unit normal vector field and assume $g(u,v) = \cos \beta$ is constant, with $\beta \in [0,\pi/2)$.
Furthermore, denote by $i_y: W^4 \hookrightarrow W^4 \times [0,1)_y$ inclusion of $W^4$ at $y$.

Recall from \autoref{sec:vanishings-weitzenböck} that there is a local basis $\{v^\perp, w_i\}_{i=1,2,3}$ of $\ker\eta$, where each $w_i$ is parallel to $TW^4$, and associated to it a basis $\{e_i\}_{i=1,2,3}$ of $\Omega^2_{v,+}(W^4 \times [0,1)_y)$ for which $J^- B = B_2 e_2 + B_3 e_3$.
Let $(s, x^i, y)_{i=1,2,3}$ be coordinates that make these into coordinate vector fields.
With that choice we have $v = \sin\beta \del_s + \cos\beta \del_y$, $v^\perp = \cos\beta \del_s - \sin\beta \del_y$, $w = \del_s$, and $w_i = dx^i$.

\begin{proposition}
\label{prop:vanishings-chi-np-asymptotics}
Assume $(A,B)$ is a $\beta$-twisted regular Nahm pole solution of the Haydys-Witten equations.
If $\beta$ is generic, $F_\omega = 0$ and $\nabla^\omega_{s} (C^{1,+})_2 + \nabla^\omega_{1} (C^{1,+})_3 = \nabla^\omega_s (C^{1,+})_3 - \nabla^\omega_1 (C^{1,+})_2 = 0$, then there is a $\delta>0$ and a constant $C$ such that
\begin{align}
	i^\ast_y \chi \sim C y^\delta \mu_{W^4} \quad (y \to 0)
\end{align}
\end{proposition}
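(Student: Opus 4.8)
The plan is to evaluate $\chi_1$ and $\chi_2$ on $W^4 \times \{y\}$ via the pullback $i_y^\ast$ and use the explicit polyhomogeneous expansions from \autoref{cor:vanishings-expansion-of-field-strength}. Since $J^- B = B_2 e_2 + B_3 e_3$ and $e_i$ is dual to $dx^i$ under the isomorphism $\Omega^2_{v,+}(W^4) \simeq \Hom(\Delta_{(u,v)}^\perp, \ad E)$, only the $dx^2$- and $dx^3$-components of $F_A$ and of $\delta_A^+ J^+ B$ contribute to $i_y^\ast \chi$. So the first step is to read off the $y$-expansions of $(F_A)_{23}$ (equivalently, the $e_1$-component of $F_A^+$) and of $(\delta_A^+ J^+ B)_{2}, (\delta_A^+ J^+ B)_3$, together with the expansion $b = y\cos\beta\, C^{1,+} + \mathcal{O}(y^2)$ from \autoref{lem:vanishings-expansion-of-nahm-pole-solutions}, and multiply them out with the appropriate trace and wedge against $\eta$.

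The key mechanism is a cancellation of the leading negative powers of $y$. From \autoref{cor:vanishings-expansion-of-field-strength}, $i_y^\ast F_A$ has a $y^{-2}$ term $\sin^2\beta\,[\phi_\rho^\tau \wedge \phi_\rho^\tau]$ and $\delta_A^+ J^+ B$ has a $y^{-2}$ term built from $[(\phi_\rho^\tau)_a, (\phi_\rho)_1]$; meanwhile $J^- B$ is $\mathcal{O}(y^0)$ at worst (its Nahm-pole part $\cos\beta\, \phi_\rho / y$ lies in $V_1^- \subset$ the $+1$-eigenspace of $J$, so $J^-$ kills it, and the next term is $y\cos\beta\, C^{1,+}$). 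Hence I expect $\chi_1 = -2\Tr(F_A \wedge J^- B)$ to have leading order $y^{-2}\cdot y^1 = y^{-1}$ a priori — so a genuine argument is needed that this $y^{-1}$ term actually vanishes. The point is that the $y^{-2}$ piece of $F_A$ is purely $[\phi_\rho^\tau \wedge \phi_\rho^\tau]$, which (by the same spin-algebra identity used at order $y^{-2}$ in the proof of \autoref{lem:vanishings-expansion-of-nahm-pole-solutions}) sits in $\rho(\mathfrak{su}(2)) = V_1$, while $C^{1,+}$ lies in $V_1^+/\tilde V_1^0 \subset V_1$, and one must check that $\Tr$ of the product of these two $V_1$-valued expressions against the relevant form vanishes — either by a Killing-form orthogonality between the $V_1^-$ and $V_1^+$ isotypic pieces, or because the $e_1$-component of $J^- B$ is zero by construction. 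The same bookkeeping applied to $\chi_2$, using the $y^{-2}$ and $y^0$ terms of $\delta_A^+ J^+ B$ from \autoref{cor:vanishings-expansion-of-field-strength} together with $i_y^\ast J^- B \sim y\cos\beta(C^{1,+}_2 e_2 + C^{1,+}_3 e_3)$ and $i_y^\ast\eta = \sin\beta\, ds$, shows that the $y^{-1}$ and $y^0$ contributions to $\chi_2$ are algebraic in $\phi_\rho, \phi_\rho^\tau, C^{1,+}$ and must be checked to cancel by the same $\mathfrak{su}(2)_\mathfrak{f}$-isotypic orthogonality.

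After the negative and zeroth orders are shown to vanish, the leading surviving term in $i_y^\ast\chi$ comes from the $y^1$ terms of $F_A$ and $\delta_A^+ J^+ B$ paired with the $y^{-2}$ part... no: more carefully, once the dangerous products are gone, the remaining terms are products of $\mathcal{O}(y^{-2})$ or $\mathcal{O}(y^0)$ factors with $\mathcal{O}(y)$ factors whose coefficient the hypothesis kills, or products that are genuinely $\mathcal{O}(y^{1+\delta})\cdot\mathcal{O}(y^{-2}) = \mathcal{O}(y^{-1+\delta})$ — so I will need the extra hypothesis $F_\omega = 0$ (to kill the $y^0$ term $F_\omega$ in $F_A$ and the $\log y$ terms, via \autoref{lem:vanishings-expansion-of-nahm-pole-solutions}) and the hypothesis $\nabla^\omega_s(C^{1,+})_2 + \nabla^\omega_1(C^{1,+})_3 = \nabla^\omega_s(C^{1,+})_3 - \nabla^\omega_1(C^{1,+})_2 = 0$, which is exactly the statement that the $y^1$ coefficient of $\delta_A^+ J^+ B$ in \autoref{cor:vanishings-expansion-of-field-strength} — the part involving $\cos\beta\,\nabla^\omega_a(C^{1,+})_1$, suitably reindexed for the $dx^2, dx^3$ entries — vanishes, removing the last potential $\mathcal{O}(y^{-1})$ contribution to $\chi_2$. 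With all of these in hand, every remaining term is $\mathcal{O}(y^\delta)$ for the $\delta > 0$ governing the next exponent in the indicial set (either $\delta = 1$ from the $y^1\cdot y^0$ and $y^{-1}$-coefficient-free products, or the fractional $\delta$ from the $\mathcal{O}(y^{2+\delta})$ remainder times $y^{-2}$), so $i_y^\ast\chi = C y^\delta \mu_{W^4} + o(y^\delta)$ with $C$ the sum of the surviving algebraic coefficients.

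The main obstacle I anticipate is the bookkeeping of the potentially singular ($y^{-1}$) and constant ($y^0$) contributions: one must verify, using the $\mathfrak{su}(2)_\mathfrak{f}$ and $\widetilde{\mathfrak{su}(2)_\mathfrak{f}}$ decompositions and the Killing-form pairing, that each such term is a trace of a product landing in mutually orthogonal isotypic components and hence vanishes — this is where the asymmetry between the $V_j^\eta$ (for $B$) and $\tilde V_j^\eta$ (for $A$) decompositions, and the specific fact that $C^{1,+} \in V_1^+/\tilde V_1^0$ rather than all of $V_1$, is doing the real work. If any such term failed to cancel algebraically, one would be forced to impose it as an additional hypothesis; the claim implicit in the proposition is that the three stated conditions ($\beta$ generic, $F_\omega = 0$, and the two first-order constraints on $C^{1,+}$) are exactly enough.
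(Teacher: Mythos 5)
There is a genuine error at the heart of your order bookkeeping: you claim that the Nahm-pole part $y^{-1}\cos\beta\,\phi_\rho$ of $B$ is annihilated by $J^-$ because $\phi_\rho$ lies in $V_1^-$, which you place inside "the $+1$-eigenspace of $J$". That identification is false. $V_1^-$ is the trivial isotypic component of the diagonal $\mathfrak{su}(2)_\mathfrak{f}$ action, whereas the $+1$-eigenspace of $J$ on $\Omega^2_{v,+}$ is spanned by elements of the form $\xi\, e_1$; since $\phi_\rho=\mathfrak{t}_1 e_1+\mathfrak{t}_2 e_2+\mathfrak{t}_3 e_3$, one has $J^-\phi_\rho=\mathfrak{t}_2 e_2+\mathfrak{t}_3 e_3\neq 0$. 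Consequently $J^-B$ is $\mathcal{O}(y^{-1})$, not $\mathcal{O}(y)$, and both $\chi_1$ and $\chi_2$ a priori begin at order $y^{-3}$, not $y^{-1}$ as in your plan. The actual content of the proof is precisely the analysis of these singular orders: the $y^{-3}$ terms vanish by explicit trace identities of the type $\Tr([\mathfrak{t}_1^\tau,\mathfrak{t}_2^\tau]\mathfrak{t}_3)-\Tr([\mathfrak{t}_1^\tau,\mathfrak{t}_3^\tau]\mathfrak{t}_2)=-2\Tr(\mathfrak{t}_2\mathfrak{t}_3)=0$; the $y^{-1}$ terms cancel between the two cross terms $\Tr([\phi_\rho^\tau\wedge\phi_\rho^\tau]\wedge J^-C^{1,+})$ and $\Tr([\phi_\rho^\tau\wedge(C^{1,+})^\tau]\wedge J^-\phi_\rho)$; and the algebraic $\mathcal{O}(1)$ terms vanish because $V_1\perp V_2$ under the trace (for $C^{2,+}$) and because $D^{1,-}\propto\phi_\rho$. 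None of this appears in your proposal, because your (incorrect) premise hides these terms from view.

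A second, related misattribution: the hypothesis $\nabla^\omega_s(C^{1,+})_2+\nabla^\omega_1(C^{1,+})_3=\nabla^\omega_s(C^{1,+})_3-\nabla^\omega_1(C^{1,+})_2=0$ is not the statement that the $y^1$ coefficient of $\delta_A^+J^+B$ vanishes. It kills only part of the $ds$-component contribution $\Tr(d_\omega(C^{1,+})^\tau\wedge J^-\phi_\rho)$ appearing in $\chi_1$ at order $\mathcal{O}(1)$; the remaining derivative terms there, proportional to $\Tr(\nabla^\omega_3(C^{1,+})_1\,\mathfrak{t}_2-\nabla^\omega_2(C^{1,+})_1\,\mathfrak{t}_3)$, are \emph{not} zero individually but cancel against the corresponding surviving terms of $\chi_2$ (which come from pairing the $y^1$ part of $\delta_A^+J^+B$ with the $y^{-1}$ part of $J^-B$, again invisible under your premise). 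Without recognizing this $\chi_1$--$\chi_2$ cancellation, and without the singular-order trace computations above, the proposal does not establish the claimed $\mathcal{O}(y^\delta)$ behaviour.
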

\begin{proof}
Consider first the term $\chi_1 = -2 \Tr(F_A \wedge J^- B)$.
Postponing the discussion of the $ds$-components for the moment, let's concentrate on the contributions from $(F_A)_{ij} dx^i\wedge dx^j$.
Observe that $e_{i}$ pulls back to $\cos\beta ds\wedge dx^{i} + \frac{1}{2} \epsilon_{ijk} dx^j\wedge dx^k$.
Fixing an orientation in which $\mu_{W^4} = \sqrt{g} ds \wedge dx^1 \wedge dx^2 \wedge dx^3$, we find that these contributions are given by $\big( (F_A)_{12} B_3 - (F_A)_{13} B_2 \big) \cos\beta \mu_{W^4}$.

Plugging the expansions of \autoref{lem:vanishings-expansion-of-nahm-pole-solutions} and \autoref{cor:vanishings-expansion-of-field-strength} into this formula, we encounter the following terms at orders $y^{-3}$ and $y^{-1}$
\begin{align}
	&y^{-3} i_y^\ast\; \Tr([\phi_\rho^\tau \wedge \phi_\rho^\tau] \wedge J^- \phi_\rho ) \propto \Tr( [\mathfrak{t}_1^\tau, \mathfrak{t}_2^\tau] \mathfrak{t}_3) - \Tr([\mathfrak{t}_1^\tau, \mathfrak{t}_3^\tau] \mathfrak{t}_2) = - 2 \Tr(\mathfrak{t}_2 \mathfrak{t}_3) = 0 \\[0.5em]
	&\begin{multlined}[0.8\textwidth]
		y^{-1} i_y^\ast\; \Tr([\phi_\rho^\tau \wedge \phi_\rho^\tau] \wedge J^- C^{1,+} ) + y^{-1} i_y^\ast\; \Tr([\phi_\rho^\tau \wedge (C^{1,+})^\tau]\wedge J^- \phi_\rho) \\
		\propto \Tr\left( - \mathfrak{t}_2 (C^{1,+})_3 - \mathfrak{t}_3 (C^{1,+})_2 \right) + \Tr\left( \mathfrak{t}_2 (C^{1,+})_3 + \mathfrak{t}_3 (C^{1,+})_2] \right)= 0
	\end{multlined}
\end{align}
Meanwhile, at order $\mathcal{O}(1)$, we have the following contributions
\begin{align}
	i_y^\ast\; \Tr( [\phi_\rho^\tau \wedge \phi_\rho^\tau] \wedge J^- C^{2,+} )  &= 0 &
	i_y^\ast\; \Tr( [\phi_\rho^\tau \wedge \phi_\rho^\tau] \wedge J^- D^{1,-} ) &= 0 \\
	i_y^\ast\; \Tr( [\phi_\rho^\tau \wedge (C^{2,+})^\tau] \wedge J^- \phi_\rho ) &= 0 &
	i_y^\ast\; \Tr( [\phi_\rho^\tau \wedge (D^{1,-})^\tau] \wedge J^- \phi_\rho ) &= 0
\end{align}
On the left we used that the subspaces $V_1,V_2 \subset \mathfrak{g}$ are orthogonal with respect to the trace (keeping in mind that the action of $\phi_\rho \in V_1$ preserves $V_2$), while the equations on the right similarly use $D^{1,-} \in V_1^-$ insofar that $D^{1,-} \propto \phi_\rho$, such that the trace vanishes by the same calculation as at order $\mathcal{O}(y^{-3})$.

According to \autoref{cor:vanishings-expansion-of-field-strength}, the only $ds$-component of $F_A$ can arise from $d_\omega C^{1,+}$, which appears at $\mathcal{O}(1)$.
As we have seen, all other contributions vanish, so we are left with
\begin{align}
	i_y^\ast \chi_1
	=& -2 i_y^\ast\, \Tr( \sin\beta\cos\beta\ d_\omega (C^{1,+})^\tau \wedge J^- \phi_\rho ) \\
	=&
	\begin{aligned}[t]
		&-2 \sin\beta\cos^2\beta\ \mu_{W^4}\ \Tr \left( (\nabla^\omega_s (C^{1,+})_2 + \nabla^\omega_1 (C^{1,+})_3 )  \mathfrak{t}_3 + (\nabla^\omega_s (C^{1,+})_3 - \nabla^\omega_1 (C^{1,+})_2 ) \mathfrak{t}_2 \right) \\
		&-2 \sin\beta\cos^2\beta\ \mu_{W^4}\ \Tr \left(\nabla^\omega_3 (C^{1,+})_1 \mathfrak{t}_2 - \nabla^\omega_2 (C^{1,+})_1 \mathfrak{t}_3 \right) + \mathcal{O}(y^{\delta})
	\end{aligned}
\end{align}
The first line of the result vanishes by assumption, while the remaining terms cancel in combination with $\chi_2$.

Hence, moving on to $\chi_2 = -2 \Tr( \delta_A^+ J^+ B \wedge J^- B \wedge \eta )$.
Note that $i_y^\ast \eta = \sin\beta ds$ and as a result only the $dx^2$- and $dx^3$-components of $\delta_A^+ J^+ B$ can contribute to this expression.
Specifically, we get $i_y^\ast \chi_2 = 2 \Tr\big( (\delta_A^+ J^+ B)_2 B_2 +  (\delta_A^+ J^+ B)_3 B_3 \big) \sin\theta \mu_{W^4}$.
Upon plugging in \autoref{lem:vanishings-expansion-of-nahm-pole-solutions} and \autoref{cor:vanishings-expansion-of-field-strength} one encounters essentially the same expressions as for $\chi_1$.
The only non-zero terms are again those that contain derivatives of $C^{1,+}$, which are now given by
\begin{align}
	i_y^\ast \chi_2 = 2 \sin\beta\cos^2\beta \mu_{W^4}\ \Tr\left( \nabla^\omega_3 (C^{1,+})_1 \mathfrak{t}_2 - \nabla^\omega_2 (C^{1,+})_1 \mathfrak{t}_3  \right) + \mathcal{O}(y^{\delta})
\end{align}
These cancel the remnants of $i_y^\ast \chi_1$, which concludes the proof.
\end{proof}


\subsection{Knot boundaries}
\label{sec:vanishings-knot-singularity-boundary}

Consider a boundary face of type $\del_K M$ and remember that it arises by blowup of a knot surface $\Sigma_K$.
Let $\Sigma_K \times H^2 \times [0,1)_R$ denote a tubular neighbourhood, where $H^2$ is the two-dimensional hemisphere.
Moreover, assume that the glancing angle $\theta$ between $v$ and $\Sigma_K$ is constant.
If $\theta \neq \pi/2$, there is a non-vanishing vector field $u$ parallel to $\Sigma_K$ such that $g(u,v) = \sin\theta$.

In analyzing the behavior of the function $\chi$ near $\del_K M$, we encounter some difficulties.
The polyhomogeneous expansion of $(a,b)$ in this case starts at $y^0$.
While the leading order of $\chi$ remains essentially unchanged, given by straightforward analogues of the $y^{-3}$-terms from earlier, the additional $y^0$-modes of $a$ and $b$ lead to new contributions, beginning at $\mathcal{O}(y^{-2})$.

To determine these contributions, we need to repeat the analysis of the polyhomogeneous expansion near $\del_K M$, but now based on the knot singularity model solutions.
In this case, one considers model solutions $(A^{\lambda,\theta}, B^{\lambda,\theta})$ that are solutions of a $\theta$-twisted version of the extended Bogomolny equations (TEBE).
Unfortunately, unlike the Nahm pole solutions, these knot singularity solutions are to the most part known only implicitly.
For $G=SU(N)$ and $\theta = \pi/2$ the solutions have been described in \cite{Mikhaylov2012}, while the existence of solutions for $G=SU(2)$ and general $\theta\in(0,\pi/2]$ is described by a continuation argument due to \cite{Dimakis2022}.

For the specific case with $G=SU(2)$ and $\theta=\pi/2$, the solutions have been described in \cite{Witten2011} and can be given explicitly.
To do so, introduce coordinates $(s,t, \psi, \vartheta, R)$ on $\Sigma_K \times H^2 \times [0,1)_R$, $(s,t)$ are local coordinates on the surface $\Sigma_K$ that we will also collectively refer to as $z$, while $R\in[0,1)$, $\psi \in [0,\pi/2]$ and $\vartheta \in [0,2\pi]$ are global coordinates on the filled hemisphere.
Let $(\mathfrak{t}_i)_{i=1,2,3}$ denote a standard basis of $\mathfrak{su}(2)$ and view $\mathfrak{t}_1$ as the generator of a fixed Cartan subalgebra.
Introduce the $\mathfrak{sl}(2,\mathbb{C})$-valued function $\varphi = \phi_2 - i \phi_3$ as a convenient combination of the components $\phi_2$ and $\phi_3$ of $B = \sum_1^3 \phi_i e_i$.
Similarly, denote by $E = \mathfrak{t}_2 - i\mathfrak{t}_3$, $H=\mathfrak{t}_1$, and $F = \mathfrak{t}_2 + i\mathfrak{t}_3$ the elements of an $\mathfrak{sl}(2,\mathbb{C})$-triple $(E,H,F)$.
The knot singularity solutions with charge $\lambda \in \mathbb{Z}$ are given by
\begin{align}\label{eq:background-Nahm-Pole-knot-singularity-model-solutions}
\begin{split}
	A_\vartheta &= - (\lambda+1) \cos^2 \psi \frac{(1+\cos \psi)^{\lambda} - (1-\cos\psi)^{\lambda}}{(1+\cos\psi)^{\lambda+1}-(1-\cos\psi)^{\lambda+1}}\ H \\[1ex]
	\phi_1 &= -\frac{\lambda+1}{R} \frac{(1+\cos\psi)^{\lambda+1} + (1-\cos\psi)^{\lambda+1}}{(1+\cos\psi)^{\lambda+1} - (1-\cos\psi)^{\lambda+1}}\ H \\[1ex]
	\varphi &= \frac{(\lambda+1)}{R} \frac{\sin^\lambda \psi\ \exp(i\lambda\vartheta) }{(1+\cos\psi)^{\lambda+1} - (1-\cos\psi)^{\lambda+1}}\ E \\[1ex]
	A_s &= A_t = A_R = A_\psi = 0
\end{split}
\end{align}
We refer to these as the untwisted knot singularity model solutions.

The higher orders in a polyhomogeneous expansion of $(A,B)$ around these model solutions was previously investigated by He \cite{He2018a, He2019a}.
Originally this was for Nahm pole solutions of the $\theta=\pi/2$ Kapustin-Witten equations, i.e. in four dimensions.
However, the $\theta=\pi/2$ Kapustin-Witten equations are equivalent to a dimensional reduction of the Haydys-Witten equations along a direction perpendicular to $v$, and along which $K$ extends to $\Sigma_K$.
After an inconsequential reinterpretation of field components, the polyhomogeneous expansion carries over to the Haydys-Witten equations with knot singularity.
Indeed, in that context the glancing angle between $v$ and $\Sigma_K$ is $\theta = \pi/2$, such that the untwisted knot singularity solutions are natural boundary conditions.
\begin{theorem}[{\cite{He2018a, He2019a}}]
Let $(A,B)$ be a solution of the Haydys-Witten equations that satisfies Nahm pole boundary conditions with a knot singularity of weight $\lambda$ and with glancing angle $\theta=\pi/2$.
Correspondingly, write $A = A^{\lambda, \pi/2} + a$ and $B=B^{\lambda,\pi/2} + b$ with $a,b \in \mathcal{O}(R^{-1+\epsilon} s^{-1+\epsilon})$.
Then $a$ and $b$ are polyhomogeneous in $R$ and $s$, with non-negative exponents.
In particular they satisfy the estimates
\begin{align}
	\abs{\nabla^\ell_z \nabla^m_R \nabla^n_{\cos \psi}\ a }_{\mathcal{C}^0} \leq C_{\ell,m,n} R^{-\epsilon-m} (\cos\psi)^{2-\epsilon-n} \ , \quad
	\abs{\nabla^\ell_z \nabla^m_R \nabla^n_{\cos \psi}\ b }_{\mathcal{C}^0} \leq C_{\ell,m,n} R^{-\epsilon-m} (\cos\psi)^{1-\epsilon-n}
\end{align}
for any $\epsilon>0$ and $\ell,m,n \in \mathbb{N}$.
\end{theorem}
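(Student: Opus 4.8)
The plan is to deduce the statement from Siqi He's analysis of the four-dimensional $\theta=\pi/2$ Kapustin--Witten equations in \cite{He2018a, He2019a}, transcribed through the dimensional reduction indicated above. First I would fix the local geometry near $\del_K M$. Blowing up $\Sigma_K=\R_s\times K$ inside $M^5=\R_s\times X^3\times\R^+_y$ replaces the (half-space) normal bundle of $\Sigma_K$ by its hemisphere bundle, producing the tubular neighbourhood $\Sigma_K\times H^2\times[0,1)_R$ with coordinates $(z,\psi,\vartheta,R)$, $z=(s,t)$; here $R$ is a boundary-defining function for $\del_K M$ and $\cos\psi$ one for the adjacent Nahm pole face, the two meeting at a corner. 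Because the product metric $ds^2+g_{X^3}+dy^2$ is $s$-independent and $\Sigma_K$ is a Riemannian product with a flat $\R_s$ factor, the blown-up metric is again an $s$-independent product and $\Sigma_K$ is totally geodesic and flat. To $\HW[v]$ I would adjoin the knot-singularity analogue of the Nahm pole gauge fixing of \autoref{sec:vanishings-polyhomogeneous-expansion-of-nahm-pole-solutions}, turning the system into a uniformly degenerate elliptic (edge) system near $\del_K M$, the setting in which Mazzeo-type polyhomogeneity theorems apply once the indicial roots are under control.

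The second step is the reinterpretation of fields. The dimensional reduction of $\HW[v]$ along $\del_s$, which is orthogonal to $v=\del_y$, is by the computation recalled in \autoref{sec:vanishings-setting} the $\theta=\pi/2$ Kapustin--Witten operator $\KW[\pi/2](\tilde A,\phi)$ on $X^3\times\R^+_y$, with $\tilde A=i^\ast A$ and $\phi$ assembled from $i^\ast B$ and $A_s$. Under this dictionary the model solution $(A^{\lambda,\pi/2},B^{\lambda,\pi/2})$ of \eqref{eq:background-Nahm-Pole-knot-singularity-model-solutions}, which does not depend on $z=(s,t)$, corresponds exactly to He's weight-$\lambda$ knot-singularity model on the blowup $[X^3\times\R^+_y;K]$. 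The crucial point is that, both the metric and the model solution being $s$-independent, the linearization of the gauge-fixed Haydys--Witten system at this model is, in the fibre variables $(R,\psi,\vartheta)$, a direct sum of He's four-dimensional edge operator and an inert $\del_s$-family; hence its indicial family over $\Sigma_K$ is $z$-independent and coincides fibrewise---after the relabelling of components---with the indicial family computed in \cite{He2018a}. The $\mathfrak{su}(2)_{\mathfrak{t}}$-decomposition and the mapping properties of $d_\omega$ enter here exactly as in \autoref{sec:vanishings-polyhomogeneous-expansion-of-nahm-pole-solutions}, only now relative to the knot-singularity model instead of the Nahm pole model.

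With the indicial data identified with the four-dimensional case, the third step runs He's polyhomogeneity theorem \cite{He2018a} together with the refined estimates of \cite{He2019a} verbatim, with $\Sigma_K$ in the role of the edge $K$: starting from the a priori bound on $(a,b)$ built into the knot-singularity boundary condition, the iteration produces a polyhomogeneous expansion in the two boundary-defining functions $R$ and $\cos\psi$ with non-negative exponents, whose leading terms are read off the smallest indicial roots and give $a=\mathcal{O}(R^{-\epsilon}(\cos\psi)^{2-\epsilon})$ and $b=\mathcal{O}(R^{-\epsilon}(\cos\psi)^{1-\epsilon})$. Differentiating the expansion in $R$ and in $\cos\psi$ yields the stated families of bounds in those two variables, while the $\nabla_z$-estimates follow from interior elliptic estimates applied along the edge direction, where the equation is uniformly (non-degenerately) elliptic because $\Sigma_K$ is flat and totally geodesic and the model is $z$-independent.

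I expect the only real obstacle to lie in the bookkeeping of the second step: verifying that the "inconsequential reinterpretation of field components" is genuinely inconsequential---that the extra connection component $A_s$ and the component of $B$ with no Kapustin--Witten analogue recombine cleanly into He's field content, that the Haydys--Witten knot-singularity gauge fixing reduces to the gauge in which He's system is elliptic, and that blowing up along $\Sigma_K=\R_s\times K$ rather than along $K$ introduces no $z$-dependence into the indicial family. Granting this, the remainder is a faithful transcription of \cite{He2018a, He2019a}.
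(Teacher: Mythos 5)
Your proposal follows essentially the same route as the paper: the paper does not prove this statement itself but imports it from \cite{He2018a, He2019a}, observing---just as you do---that the $\theta=\pi/2$ Kapustin--Witten equations are the dimensional reduction of the Haydys--Witten equations along the direction perpendicular to $v$ in which $K$ extends to $\Sigma_K$, so that after a reinterpretation of field components He's polyhomogeneous expansion carries over to the five-dimensional setting. Your additional bookkeeping (the $z$-independent indicial family over the enlarged edge $\Sigma_K$, tangential estimates along $\R_s$) is simply a fleshed-out version of what the paper dismisses as an ``inconsequential reinterpretation of field components.''
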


We can use this to determine the leading order of $\chi$ in the case of $\theta=\pi/2$ knot singularities along $\del_K M$.
For this, let $i_R: \Sigma_K \times H^2 \hookrightarrow \Sigma_K \times H^2 \times [0,1)_R$ denote inclusion at radius $R$ and write $\mu_{\Sigma_K \times H^2_R}$ for the pullback of the volume form under $i_R$, i.e. to the hemisphere (or cylinder) of radius $R$.
First, since the $dR$-components of $F_A$ drop out under pullback and the remaining components contain at most one derivative $\nabla_{\cos\psi}$, only components of $F_A$ of order $\mathcal{O}(R^{-\epsilon} (\cos\psi)^{1-\epsilon})$ contribute to $\chi_1$.
Furthermore, $J^- B = \mathcal{O}(R^{-1}(\cos\psi)^{-1})$, such that $\chi_1 = -2 \Tr(F_A \wedge J^- B) \sim C R^{-1-\epsilon} (\cos\psi)^{-\epsilon} ) \mu_{\Sigma_K \times H^2_R}$
Second, $\delta_A^+ J^+ B$ does not contain $\nabla_R$ and the leading order contributions of the product $J^+ B \wedge J^-B$ at $R^{-2}(\cos\psi)^{-2}$ vanish by construction of the underlying Nahm pole ($\Tr(HE) = 0$).
We conclude that also $\chi_2 = -2 \Tr( \delta_A^+ J^+ B \wedge J^- B \wedge \eta ) \sim C R^{-1-\epsilon} (\cos\psi)^{-\epsilon} ) \mu_{\Sigma_K \times H^2_R}$.
In fact, when the assumptions of \autoref{sec:vanishings-nahm-pole-boundaries} are satisfied, the exponent of $\cos\psi$ can be improved to $(\cos\psi)^\delta$, for some $\delta>0$.
This follows from the fact that $R\cos\psi = y$ and for fixed $R\neq 0$, the expansion in $\cos\psi$ must be consistent with the expansion in $y^\delta$ given in \autoref{prop:vanishings-chi-np-asymptotics}.

All in all, if $(A,B)$ are Haydys-Witten solutions that satisfy (untwisted) Nahm pole boundary conditions near $\del_K M$, then $\chi$ is of order $R^{-1-\epsilon} (\cos\psi)^{\delta}$, for some $\delta> 0$.
It is natural to expect that for general $\theta\in(0,\pi/2]$ the behavior of the polyhomogeneous expansions of $A$ and $B$ near $\del_K M$ follows a similar pattern to the one we observed for the pure $\beta$-twisted Nahm pole solutions at $\del_{\mathrm{NP}} M$.
Specifically, if $A= A^{\lambda, \theta} + a$ and $B=B^{\lambda,\theta} + b$ with respect to the twisted model solutions, it is still true that $a$ and $b$ admit polyhomogeneous expansions of order $\mathcal{O}(R^{-\epsilon} (\cos\psi)^{-\epsilon})$.
In analogy to the cancellations in the case of Nahm pole boundary conditions, we then expect that $a$ and $b$ are restricted in a way that makes any contributions to $\chi$ below $R^{-1-\epsilon}(\cos\psi)^{\delta}$ vanish.

In any case, as we will see below, the following asymptotic behaviour of $\chi$ near a knot boundary $\del_K M$ is sufficient to extend \autoref{bigthm:decoupling} to situations with knot singularities:
\begin{align}
	\chi \sim C R^{-2+\delta} (\cos\psi)^{\delta} \mu_{\Sigma_K \times H^2_R} \quad (R \to 0)
	\label{eq:vanishings-chi-knot-asymptotics}
\end{align}
It's likely that such a result only holds under certain conditions on the topology and geometry of $E\to M^5$ and $\Sigma_K \subset \del_{\mathrm{NP}} M$.


\subsection{Kapustin-Witten ends}
\label{sec:vanishings-nagy-oliveira-asymptotic-ends}

We now consider Kapustin-Witten ends $\del_{\mathrm{KW}} M^5$ and $\del_{\mathrm{NP-KW}} M^5$.
As explained in \autoref{sec:vanishings-setting}, we take this to mean non-compact ends at which fields converge to a Kapustin-Witten solution with either finite energy or with Nahm pole at a corner of $M^5$, respectively.
We let $[0,1)_s \times W^4$ be a tubular neighbourhood and denote by $i_s: W^4 \hookrightarrow [0,1)_s \times W^4$ inclusion of $W^4$ at $s$.
Write $u = s\del_s$ for the inward-pointing unit normal vector field and assume $g(u,v) = \cos \theta$ is constant.
As usual, if $\theta\neq 0$, let $w$ be the non-vanishing unit vector field on $W^4$ with respect to which $v= \cos\theta u + \sin\theta w$.

We begin with a general result about the pullback of $\chi$ in dependence of the limiting field configuration.
Hence, assume $(A,B)$ approaches a solution of the $\theta$-Kapustin-Witten equations $(\tilde{A}, \phi)$ on $W^4$.
In a local basis $(s, t, x^i)_{i=1,2,3}$ with $w = \del_t$, the pullback of $\chi = 2\Tr( F_A \wedge J^- B + \delta_A^+ J^+ B \wedge J^- B \wedge \eta )$ is then given by
\begin{align}
	\thalf \lim_{s \to 0} i_s^\ast \chi 
	= \cos\theta \Tr\big( (F_{\tilde A}^+)_{t2} \phi_2 + (F_{\tilde A}^+)_{t3} \phi_3 \big) \mu_{W^4} 
	+ \sin\theta \Tr\big( \nabla^{\tilde A}_2 \phi_1\  \phi_3 - \nabla^{\tilde A}_3 \phi_1\ \phi_2  \big) \mu_{W^4}
	\label{eq:vanishings-kw-asymptotics}
\end{align}
Here, $F_{\tilde A}^+$ denotes the self-dual part with respect to the four-dimensional Hodge operator $\hodge_{W^4}$.

\paragraph{Finite Energy Solutions.}
Kapustin and Witten observed that Kapustin-Witten solutions on closed manifolds are highly restricted \cite{Kapustin2007}.
A similar result was established by Nagy and Oliveira for finite energy solutions on ALE and ALF gravitational instantons \cite{Nagy2021}.
The energy in question, usually called Kapustin-Witten energy, is defined by the functional
\begin{align}
	E_{\mathrm{KW}} = \int_{W^4} \norm{F_{\tilde A}}^2 + \norm{\nabla^{\tilde A} \phi}^2 + \norm{[\phi\wedge\phi]}^2 
\end{align}
Nagy and Oliveira observed that for finite energy solutions the norm of the Higgs field is bounded, which yields surprisingly strong restrictions for solutions on $\R^4$ and $S^1 \times \R^3$, when combined with a result by Taubes \cite{Taubes2017a}.
This has recently been generalized to arbitrary ALE and ALF gravitational instantons, as was already conjectured by Nagy and Oliveira.
The situation is summarized by the following two theorems.
\begin{theorem*}[\cite{Kapustin2007, Gagliardo2014}]
\label{thm:background-Kapustin-Witten-vanishing}
Let $E\to W^4$ be an $SU(2)$ principal bundle over a compact manifold without boundary.
Assume $(A,\phi)$ satisfies the $\theta$-Kapustin-Witten equations with $\theta \in (0,\pi)$.
If $E\to W^4$ has non-zero Pontryagin number, then $A$ and $\phi$ are identically zero. 
Otherwise $A+i\phi$ is a flat $PSL(2,\mathbb{C})$ connection; equivalently $F_A = [\phi\wedge\phi]$ and $\nabla^A \phi = 0$.
\end{theorem*}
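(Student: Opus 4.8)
To prove this I would reproduce the Bogomolny-type argument of Kapustin--Witten, in the careful form given by Gagliardo--Uhlenbeck. The starting point is to parametrize the $\theta$-Kapustin--Witten equations by $t = \tan(\theta/2)$, which is a nonzero real number precisely when $\theta \in (0,\pi)$. In this parametrization a solution $(A,\phi)$ on $W^4$ satisfies
\begin{align}
	\big(F_A - [\phi\wedge\phi]\big)^+ &= -t\,(d_A\phi)^+ \ , &
	\big(F_A - [\phi\wedge\phi]\big)^- &= t^{-1}\,(d_A\phi)^- \ , &
	d_A^\ast \phi &= 0 \ ,
\end{align}
where $(\cdot)^\pm$ denotes the (anti-)self-dual part with respect to $\hodge_4$ on $W^4$ (the overall signs are convention-dependent; only the relative sign between the first two equations matters below). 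The first step is the integration-by-parts identity $\int_{W^4} \Tr\big( (F_A - [\phi\wedge\phi]) \wedge d_A\phi \big) = 0$: the term $\int_{W^4}\Tr(F_A\wedge d_A\phi)$ equals $\int_{W^4} d\,\Tr(F_A\wedge\phi)$ by the Bianchi identity $d_A F_A = 0$, and $\int_{W^4}\Tr([\phi\wedge\phi]\wedge d_A\phi)$ is a multiple of $\int_{W^4} d\,\Tr(\phi\wedge\phi\wedge\phi)$; both vanish by Stokes' theorem, since $W^4$ is closed.

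The second step is to split this vanishing integral into self-dual and anti-self-dual pieces. Because $\Tr(\alpha\wedge\beta)$ pairs $\alpha^+$ with $\beta^+$ and $\alpha^-$ with $\beta^-$ with opposite overall sign, substituting the two curvature equations turns the identity into $t\,\norm{(d_A\phi)^+}_{L^2}^2 + t^{-1}\norm{(d_A\phi)^-}_{L^2}^2 = 0$. For $\theta\in(0,\pi)$ the coefficients $t$ and $t^{-1}$ are both nonzero and of the same sign, so both terms must vanish; hence $d_A\phi = 0$, i.e. $\nabla^A\phi = 0$. Feeding this back into the equations gives $(F_A - [\phi\wedge\phi])^+ = (F_A-[\phi\wedge\phi])^- = 0$, so $F_A = [\phi\wedge\phi]$ -- equivalently, the complexified connection $A + i\phi$ has vanishing curvature. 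This is the \emph{otherwise} assertion.

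Finally, from $F_A = [\phi\wedge\phi]$ one extracts the topological constraint. By Chern--Weil, $\langle p_1(E),[W^4]\rangle$ is proportional to $\int_{W^4}\Tr(F_A\wedge F_A)$, which now equals a multiple of $\int_{W^4}\Tr([\phi\wedge\phi]\wedge[\phi\wedge\phi])$. For $G = SU(2)$ this integrand vanishes identically: writing $\phi = \phi_a\,\mathfrak{t}^a$ with $[\mathfrak{t}^a,\mathfrak{t}^b] = \epsilon^{abc}\mathfrak{t}^c$, the contraction $\epsilon^{abe}\epsilon^{cde} = \delta^{ac}\delta^{bd} - \delta^{ad}\delta^{bc}$ collapses $\Tr([\phi\wedge\phi]\wedge[\phi\wedge\phi])$ to terms proportional to $\phi_a\wedge\phi_a\wedge\phi_b\wedge\phi_b = 0$. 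Hence $p_1(E) = 0$; contrapositively, a bundle with nonzero Pontryagin number carries no solution at all, which is what the first assertion records. I do not expect a genuine obstacle here -- everything is an identity on a closed manifold. The points that require care are purely bookkeeping: tracking the relative sign between the self-dual and anti-self-dual equations (this is exactly where $\theta \in (0,\pi)$ enters, as opposed to the instanton endpoints $\theta = 0,\pi$), checking that $\Tr(\phi\wedge\phi\wedge\phi)$ is a globally defined $3$-form so that Stokes applies, and verifying the sign with which $\Tr(\cdot\wedge\cdot)$ pairs self-dual against anti-self-dual forms.
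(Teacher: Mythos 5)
Your proof is correct and is precisely the standard Bogomolny/integration-by-parts argument of the cited sources: the paper itself does not prove this statement but quotes it from \cite{Kapustin2007, Gagliardo2014}, and you have reproduced their proof faithfully (the cross term $\int\Tr((F_A-[\phi\wedge\phi])\wedge d_A\phi)$ vanishes by Bianchi plus Stokes, the opposite signs with which the wedge pairing sees $\Lambda^+$ and $\Lambda^-$ give $t\,\norm{(d_A\phi)^+}^2+t^{-1}\norm{(d_A\phi)^-}^2=0$ with $t=\tan(\theta/2)\in(0,\infty)$, and the $\mathfrak{su}(2)$ identity $\Tr([\phi\wedge\phi]\wedge[\phi\wedge\phi])=0$ kills the Pontryagin number once $F_A=[\phi\wedge\phi]$). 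The only point worth making explicit is interpretive: what the argument (and the stated equivalence with flatness of $A+i\phi$) delivers is $d_A\phi=0$ together with $d_A\!\star\!\phi=0$ from the third equation — the theorem's ``$\nabla^A\phi=0$'' must be read in that exterior-derivative sense, not as full covariant constancy — and the first assertion is properly ``no solutions exist on a bundle with non-zero Pontryagin number,'' exactly as you record it.
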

\begin{theorem*}[\cite{Bleher2023a}] \label{thm:vanishings-nagy-oliveira-conjecture}
Let $(A,\phi)$ be a finite energy solution of the $\theta$-Kapustin-Witten equations with $\theta\neq 0 \pmod{\pi}$ on an ALE or ALF gravitational instanton and let $G=SU(2)$.
Then $A$ is flat, $\phi$ is $\nabla^A$-parallel, and $[\phi \wedge \phi] = 0$.
\end{theorem*}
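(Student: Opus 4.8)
The plan is to recognize this as the Nagy--Oliveira vanishing conjecture and to prove it in two stages: first an \emph{a priori bound} showing that the Higgs field has bounded pointwise norm, $\sup_{W^4}\abs{\phi}<\infty$, and decays at ALE ends; then a \emph{Weitzenböck-and-cutoff argument} that, fed this bound together with the finite-energy hypothesis, forces $\nabla^A\phi\equiv0$, after which $[\phi\wedge\phi]=0$ and $F_A=0$ follow algebraically. The model cases $W^4=\R^4$ and $W^4=S^1\times\R^3$ are exactly Nagy and Oliveira's theorem, where the bound on $\abs{\phi}$ rests on Taubes's compactness analysis \cite{Taubes2017a} (and on $G=SU(2)$); the new content is to propagate everything to a general gravitational instanton.

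\emph{Stage 1 (the main obstacle).} I would establish $\sup_{W^4}\abs{\phi}<\infty$ and, along any ALE end, a decay rate $\abs{\phi}(x)\lesssim d(x,x_0)^{-1}\,\delta\big(d(x,x_0)\big)$ with $\delta(r)\to0$. Off a large compact set an ALE (resp.\ ALF) end is, after rescaling, close to the Ricci-flat cone $\R^4/\Gamma$ (resp.\ to $S^1\times(\R^3\setminus B)$ up to a circle-bundle correction). On such ends one sets up an $\varepsilon$-regularity statement for the $\theta$-Kapustin-Witten system --- of the form $\sup_{B_r(x)}\big(\abs{F_A}^2+\abs{\nabla^A\phi}^2\big)\lesssim r^{-4}\!\int_{B_{2r}(x)}\big(\abs{F_A}^2+\abs{\nabla^A\phi}^2+\abs{[\phi\wedge\phi]}^2\big)$ once the right-hand integral is below a universal threshold --- together with a monotonicity formula for $E_{\mathrm{KW}}$ on the model ends; finiteness of the global energy then makes the energy in annuli at scale $R$ tend to $0$, hence $\abs{\nabla^A\phi}$ decays, and integrating this along rays yields the stated decay of $\abs{\phi}$ on ALE ends, while on ALF ends the $S^1$-reduction returns one to the three-dimensional Nagy--Oliveira situation; interior elliptic estimates bound $\abs{\phi}$ on the compact core. \emph{This is the technical heart}: the $\varepsilon$-regularity and monotonicity must be carried over to curved, only asymptotically conical ends with the curvature error terms controlled --- precisely the step where Taubes's flat-space arguments need upgrading, and the substance of the conjecture.

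\emph{Stage 2 (vanishing).} On the Ricci-flat $W^4$ the Kapustin-Witten equations yield, via the standard Weitzenböck identity (using $d_A^\ast\phi=0$, that the self-dual and anti-self-dual curvature equations relate $d_A\phi$ to $F_A-[\phi\wedge\phi]$, and discarding a term that vanishes because gravitational instantons are hyperkähler, hence Ricci-flat),
\begin{align}
	\tfrac12\,d^\ast d\,\abs{\phi}^2 = -\abs{\nabla^A\phi}^2 - c_\theta\,\abs{[\phi\wedge\phi]}^2 \leq 0 ,
\end{align}
with $c_\theta\geq0$ and $c_\theta>0$ since $\theta\not\equiv0\pmod\pi$; in particular $\abs{\phi}^2$ is subharmonic. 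Integrating against $\psi_R^2$, where $\psi_R\equiv1$ on $B_R(x_0)$, is supported in $B_{2R}(x_0)$, and satisfies $\abs{d\psi_R}\lesssim1/R$, and using the Kato and Young inequalities, gives $\int_{W^4}\psi_R^2\,\abs{\nabla^A\phi}^2\lesssim\int_{W^4}\abs{d\psi_R}^2\,\abs{\phi}^2$; the decay from Stage 1 makes the right-hand side $\mathcal{O}(\delta(R)^2)\to0$ as $R\to\infty$ (this is where $\int_{W^4}\abs{\nabla^A\phi}^2<\infty$ enters, through $\delta(R)\to0$), so $\nabla^A\phi\equiv0$. Then $\phi\wedge\phi$ is $\nabla^A$-parallel, so $\abs{[\phi\wedge\phi]}$ is constant; since $\int_{W^4}\abs{[\phi\wedge\phi]}^2\leq E_{\mathrm{KW}}<\infty$ while the gravitational instanton has infinite volume, that constant is $0$, i.e.\ $[\phi\wedge\phi]=0$. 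Finally $\nabla^A\phi=0$ also gives $d_A\phi=0$, so both halves of the curvature equation read $(F_A)^{\pm}=(\text{const})\cdot[\phi\wedge\phi]^{\pm}=0$, whence $F_A=0$ and $A$ is flat. I expect only Stage 1 to present genuine difficulty; Stage 2 is the routine Weitzenböck-and-cutoff package.
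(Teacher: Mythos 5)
There is a genuine gap, and it sits exactly where you placed the weight of the argument. First, a structural remark: the paper does not prove this statement at all — it is quoted from \cite{Bleher2023a}, and the paper's own commentary (in \autoref{sec:vanishings-nagy-oliveira-asymptotic-ends}) tells you what the actual route is: subharmonicity of $\abs{\phi}^2$ plus the existence of a positive Green's function on ALE/ALF spaces gives \emph{boundedness} of $\abs{\phi}$, and then a Taubes-type theorem ("bounded Higgs field forces the vanishing") is what does the real work; that last step is precisely what fails to be routine and what \cite{Bleher2023a} extends beyond $\R^4$ and $S^1\times\R^3$. Your Stage 1 claims strictly more than boundedness, namely a decay $\abs{\phi}(x)\lesssim d(x,x_0)^{-1}\delta(d(x,x_0))$ along ALE ends, and this is false: the theorem's own conclusion allows $A$ flat and $\phi$ nonzero but $\nabla^A$-parallel, so $\abs{\phi}$ is a nonzero constant for such solutions (e.g.\ on the flat gravitational instantons $\R^4$ or $S^1\times\R^3$, take $A$ trivial and $\phi$ a constant $1$-form tensored with a fixed Lie algebra element; this has zero energy). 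Relatedly, your mechanism for producing decay — $\varepsilon$-regularity giving decay of $\abs{\nabla^A\phi}$ and then "integrating along rays" — can only show that $\abs{\phi}$ converges to some limit on each end, not that the limit is zero; and the appeal to "$S^1$-reduction" on ALF ends is not available, since finite-energy solutions on an ALF space are not $S^1$-invariant and Nagy--Oliveira's $S^1\times\R^3$ result is not a dimensional reduction statement.

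Because Stage 1 cannot deliver decay, Stage 2 does not close. With only $\sup\abs{\phi}<\infty$ (the best one can hope for, and what the Green's-function argument actually yields), your cutoff estimate gives $\int\psi_R^2\abs{\nabla^A\phi}^2\lesssim\int\abs{d\psi_R}^2\abs{\phi}^2\lesssim R^{-2}\operatorname{vol}\bigl(B_{2R}\setminus B_R\bigr)$, which grows like $R^2$ on an ALE end and like $R$ on an ALF end, so you cannot conclude $\nabla^A\phi\equiv0$ this way; the "routine Weitzenböck-and-cutoff package" is exactly the step that does not survive the four-dimensional volume growth. (The subsequent algebra — $[\phi\wedge\phi]$ parallel hence zero by infinite volume, then $F_A=0$ from the curvature equations — is fine \emph{given} $\nabla^A\phi=0$, and the subharmonicity identity on a Ricci-flat space is plausible, but these are not where the difficulty lies.) To repair the proposal you would have to replace Stage 1 by the boundedness statement (subharmonicity plus nonparabolicity/Green's function, which is why the paper notes the strategy breaks on ALG/ALH spaces) and replace Stage 2 by a genuine extension of Taubes's theorem to ALE/ALF gravitational instantons — which is the content of the cited reference, not a corollary of a cutoff argument.
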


In combination with equation~\eqref{eq:vanishings-kw-asymptotics}, we immediately arrive at the following result.
\begin{proposition} \label{prop:vanishings-chi-kw-asymptotics}
Let $G=SU(2)$ and $\theta\nequiv 0 \pmod{\pi}$.
Assume $(A,B)$ approaches a finite energy solution of the $\theta$-Kapustin-Witten equations on $W^4$ as $s \to 0$.
If $W^4$ is an ALE or ALF gravitational instanton, or if $W^4$ is compact and either
\begin{enumerate*}
\item $E\to W^4$ has non-zero Pontryagin number or
\item $\theta=\pi/2$,
\end{enumerate*}
then $\lim_{s\to 0} i_s^\ast\chi = 0$.
\end{proposition}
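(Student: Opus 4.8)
The plan is to read off $\lim_{s\to 0} i_s^\ast\chi$ directly from the formula \eqref{eq:vanishings-kw-asymptotics}, which already expresses this limit entirely in terms of the boundary Kapustin--Witten solution $(\tilde A,\phi)$: it is a sum of a term proportional to $\cos\theta$ built from components of the self-dual curvature $F_{\tilde A}^+$ and a term proportional to $\sin\theta$ built from $\nabla^{\tilde A}_2\phi_1$ and $\nabla^{\tilde A}_3\phi_1$. Hence it suffices to verify, case by case, that $F_{\tilde A}^+=0$ whenever $\cos\theta\neq 0$ and that $\nabla^{\tilde A}_2\phi_1=\nabla^{\tilde A}_3\phi_1=0$ whenever $\sin\theta\neq 0$; in each of the listed situations one of the two vanishing theorems quoted above delivers precisely this.

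First I would dispose of the ALE/ALF case. By hypothesis $(\tilde A,\phi)$ is a finite energy solution of the $\theta$-Kapustin--Witten equations with $\theta\neq 0\pmod{\pi}$ on a gravitational instanton, so the vanishing theorem of \cite{Bleher2023a} applies and gives that $\tilde A$ is flat, $\phi$ is $\nabla^{\tilde A}$-parallel, and $[\phi\wedge\phi]=0$. Flatness forces $F_{\tilde A}=0$, hence $F_{\tilde A}^+=0$ and the $\cos\theta$-term vanishes, while $\nabla^{\tilde A}\phi=0$ gives in particular $\nabla^{\tilde A}_2\phi_1=\nabla^{\tilde A}_3\phi_1=0$, so the $\sin\theta$-term vanishes too. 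Then \eqref{eq:vanishings-kw-asymptotics} yields $\lim_{s\to0}i_s^\ast\chi=0$.

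For the two compact cases I would invoke the Kapustin--Witten vanishing theorem \cite{Kapustin2007, Gagliardo2014} (after, if necessary, the standard reduction of $\theta$ to the range $(0,\pi)$, which already contains $\pi/2$). If $E\to W^4$ has non-zero Pontryagin number the theorem gives $\tilde A\equiv 0$ and $\phi\equiv 0$, so the claim is immediate. If instead $\theta=\pi/2$, the $\cos\theta$-term in \eqref{eq:vanishings-kw-asymptotics} vanishes outright, and the same theorem gives either $\tilde A=\phi=0$ or else $F_{\tilde A}=[\phi\wedge\phi]$ together with $\nabla^{\tilde A}\phi=0$; in both alternatives $\nabla^{\tilde A}_2\phi_1=\nabla^{\tilde A}_3\phi_1=0$, so the $\sin\theta$-term also vanishes.

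There is no real analytic obstacle, since all of the work has been absorbed into the derivation of \eqref{eq:vanishings-kw-asymptotics} and into the cited vanishing theorems. The points that need attention are organizational: checking that the case split is exhaustive for exactly the situations claimed — in particular that the compact case with vanishing Pontryagin number and $\theta\neq\pi/2$ is deliberately excluded, because there $F_{\tilde A}^+$ need not vanish and the $\cos\theta$-term in \eqref{eq:vanishings-kw-asymptotics} may survive — and confirming that the ALE/ALF theorem annihilates the full curvature $F_{\tilde A}$, not merely its self-dual part, so that the $\cos\theta$-term vanishes there without any self-duality subtleties. I would also make sure that the mode of convergence of $(A,B)$ to $(\tilde A,\phi)$ assumed in the proposition is the one under which \eqref{eq:vanishings-kw-asymptotics} was established, so that passing to the limit is legitimate.
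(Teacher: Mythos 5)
Your proof is correct and takes essentially the same route as the paper: both read the limit off formula \eqref{eq:vanishings-kw-asymptotics} and dispose of the two terms case by case, using the ALE/ALF vanishing theorem of \cite{Bleher2023a} (flatness kills the $\cos\theta$-term, $\nabla^{\tilde A}\phi=0$ kills the $\sin\theta$-term) and the compact Kapustin--Witten vanishing result \cite{Kapustin2007, Gagliardo2014} for the remaining cases. The only cosmetic difference is that the paper, in the compact zero-Pontryagin case, first rewrites the $\cos\theta$-term as $2\cos\theta \Tr([\phi_1,\phi_2]\phi_3)\,\mu_{W^4}$ before noting it vanishes at $\theta=\pi/2$, whereas you discard it immediately via $\cos\theta=0$.
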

\begin{proof}
If $W^4$ is an ALE or ALF gravitational instanton, both terms in~\eqref{eq:vanishings-kw-asymptotics} vanish directly.
The first since $\tilde A$ is flat and the second since $\phi$ is $\nabla^{\tilde A}$ parallel.

If $W^4$ is compact, $A$ and $\phi$ can only be non-zero if the Pontryagin number of $E\to W^4$ is zero and, furthermore, in that case $F_{\tilde A} = [\phi\wedge\phi]$ and $\nabla^{\tilde A} \phi = 0$.
It follows that $\lim_{s\to 0} i_s^\ast \chi = 2 \cos\theta \Tr( [\phi_1,\phi_2] \phi_3) \mu_{W^4}$, which vanishes if $\theta = \pi/2$.
\end{proof}

We make two observations.
First, if $\theta \equiv 0 \pmod{\pi}$, the natural boundary condition for $(A,B)$ are finite energy solutions $(\tilde A, B, C)$ of the Vafa-Witten equations on $W^4$.
On $\R^4$ the Vafa-Witten and $\theta=0$ Kapustin-Witten equations are equivalent by an identification of $\phi = C dx^0 + B_{0i} dx^i$.
According to the result by Taubes mentioned earlier, any solution with bounded $\norm{\phi}$ satisfies $\sigma(B,B) = [B,C] = 0$ and $\nabla^A B = \nabla^A C= 0$ \cite{Taubes2017a}.

It is not currently known if Vafa-Witten solutions on ALX spaces have a similar property.
However, the Vafa-Witten equations are still closely related to the $\theta=0$ version of the Kapustin-Witten equations, and Taubes' result has been established for the latter in \cite{Bleher2023a}.
One might thus expect that, at least in certain situations, the vanishing results carry over to Vafa-Witten solutions.
Whenever this is the case, the Vafa-Witten equations $F_{\tilde A}^+ = \sigma(B,B) + [B,C]$ reduce to anti-self-dual equations for $\tilde A$, in which case $\lim_{s\to 0} i_s^\ast \chi = F_{\tilde A}^+ \wedge i_s^\ast J^- B = 0$, as we found above.

Second, the fact that $\chi$ converges to zero holds at two ends of a spectrum of asymptotic volume growth of $W^4$.
On the one hand, in the case of compact manifolds with bounded volume or equivalently asymptotic volume growth of order $r^0$, and on the other hand, on ALF and ALE manifolds with asymptotic volume growth of order $r^3$ and $r^4$, respectively.
For ALG and ALH gravitational instantons, the proof strategy of Nagy and Oliveira result doesn't work, because they rely on the existence of a positive Green's function for the Laplacian.
The approach to Taubes' dichotomy fails for ALH manifolds for much the same reason.

To the best of the authors knowledge it is currently not known if an analogue of the results of Nagy and Oliveira should be expected to be true or false on ALG and ALH gravitational instantons.
However, from the physics perspective there is no obvious reason to single out intermediate volume growth like that.
On the contrary, in Witten's approach to Khovanov homology, where one considers $M^5 = \R_s \times X^3 \times \R^+_y$, it is natural to expect simplifications whenever $X^3$ has additional substructure.

For example, one expects that Khovanov homology arises for $X^3=S^3$, in which case $M^5$ has an ALH end $W^4 = \R_s \times S^3$ at $y\to\infty$.
Also, Gaiotto and Witten recovered the Jones polynomial purely by adiabatically braiding solutions of the EBE under the assumption that $X^3 = \R_t \times \Sigma^2$, in which case one encounters an ALG manifold at $y \to \infty$.
It thus seems plausible to postulate that the results of this section hold, more generally, whenever $W^4$ is a complete Ricci-flat Riemannian manifold with sectional curvature bounded from below and $(A,B)$ approaches a finite energy solution of the $\theta$-Kapustin-Witten equations ($\theta \neq 0, \pi$) or the Vafa-Witten equations ($\theta =0$).

\paragraph{Nahm Poles at Corners.}
We now discuss the behaviour of $\chi$ at non-compact ends of the class $\del_{\mathrm{NP-KW}} M^5$.
As before, we denote by $[0,1)_s \times W^4$ a tubular neighbourhood.
In this situation we still demand that $(A,B)$ converges to a $\theta$-Kapustin-Witten solution $(\tilde A, \phi)$ on $W^4$ as $s\to 0$.
But in contrast to earlier, we now assume that $(\tilde A, \phi)$ satisfies $\beta$-twisted Nahm pole boundary conditions at an adjacent corner of $M^5$ -- or equivalently at a boundary of $W^4$.
Observe that $(\tilde A, \phi)$ cannot have finite energy if it exhibits a Nahm pole, such that \autoref{prop:vanishings-chi-kw-asymptotics} doesn't apply.

In the absence of a finite energy condition, we now also have to specify boundary conditions at non-compact ends $[0,1)_{s^\prime} \times X^3$ of $W^4$.
We demand that $\tilde A + i\phi$ converges to a flat $G_{\mathbb{C}}$ connection on $X^3$ as $s^\prime \to 0$.
From now on we refer to these configurations simply as Nahm pole solutions of the $\theta$-Kapustin-Witten equations.

\begin{remark}
Note that a non-compact end $[0,1)_{s^\prime} \times X^3$ of $W^4$ corresponds to a "corner at infinity" $[0,1)_{s} \times [0,1)_{s^\prime} \times X^3$ of $M^5$.
This corner is adjacent to two non-compact ends of $M^5$, at which $(A,B)$ converges to a corresponding solution of the $\theta$- or $\theta^\prime$-Kapustin-Witten equation, respectively.
The two associated asymptotic boundary conditions, which demand that both $\tilde A + i \phi$ and $\tilde{A}^\prime + i \phi^\prime$ are flat connections on $X^3$, have to be consistent with the fact that both Kapustin-Witten solutions arise from the common five-dimensional fields $(A,B)$.
Put differently: If we view $B$ as a one-form by the usual isomorphism, the pullback of $A+iB$ converges to a flat connection on $X^3$ as $s,s^\prime \to 0$.
\end{remark}

We similarly need to ensure that the boundary conditions are compatible at corners $[0,1)_s \times X^3 \times [0,1)_y$ that are adjacent to a $\theta$-Kapustin-Witten end as $s\to 0$ and a $\beta$-Nahm pole as $y\to 0$.
This is only the case if $\beta= \pi/2 - \theta$, since otherwise the $\beta$-twisted Nahm pole model solutions are not solutions of the $\theta$-Kapustin-Witten equations.

\begin{proposition} \label{prop:vanishings-chi-npkw-asymptotics}
Assume $(A,B)$ approaches a solution of the $\theta$-Kapustin-Witten equations on $W^4$.
Then 
\begin{align}
	\lim_{s\to 0} i_s^\ast \chi 
	&= \tfrac{2}{3} i_0^\ast \Tr\left( \sigma(B,B) \wedge B \right)
		+ 2\tfrac{\sin\theta}{\cos^2\theta}\ i_0^\ast d \Tr\left( \imath_w (J^+ B) \wedge J^- B \right)
		\ .
\end{align}
and the expression is exact if $\theta=\pi/2$.
\end{proposition}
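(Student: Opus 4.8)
The plan is to start from the general formula~\eqref{eq:vanishings-kw-asymptotics} for $\lim_{s\to 0} i_s^\ast\chi$ in terms of the limiting four-dimensional data $(\tilde A,\phi)$, and then to feed in the $\theta$-Kapustin-Witten equations $\KW[\theta](\tilde A,\phi)=0$. Written out in the $w$-adapted frame $(s,t,x^i)$ used in~\eqref{eq:vanishings-kw-asymptotics}, the self-dual KW equation expresses $(F_{\tilde A}^+)_{t2}$, $(F_{\tilde A}^+)_{t3}$ as a $\cos\theta$-weighted combination of commutators $[\phi_\mu,\phi_\nu]$ together with a $\sin\theta$-weighted combination of components of $d_{\tilde A}\phi$. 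Substituting these into~\eqref{eq:vanishings-kw-asymptotics} and using the trace identities $\Tr([\phi_a,\phi_b]\phi_b)=0$ and $\Tr([\phi_1,\phi_3]\phi_2)=-\Tr([\phi_1,\phi_2]\phi_3)$ splits $\lim i_s^\ast\chi$ cleanly into a purely algebraic part, cubic in $\phi$, and a part linear in $\nabla^{\tilde A}\phi$.

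For the cubic part I would record the combinatorial identity $\Tr\big(\sigma(B,B)\wedge B\big) = c\,\Tr\big([\phi_1,\phi_2]\phi_3\big)\,\mu_{W^4}$ for a fixed constant $c$: expanding $\sigma(B,B)\wedge B$ in the basis $e_i$ and using $e_a\wedge e_b\propto\delta_{ab}\mu_{W^4}$, each of the cyclically related $\epsilon$-terms contributes equally, by cyclicity of the trace together with antisymmetry of the bracket. One then checks that the cubic contributions produced by the substitution — collecting pieces from both the self-dual and (after the reorganization below) the anti-self-dual KW equations, so that the $\cos^2\theta$- and $\sin^2\theta$-weighted terms combine to something $\theta$-independent — assemble exactly into $\tfrac23\,i_0^\ast\Tr\big(\sigma(B,B)\wedge B\big)$, the $\tfrac23$ being the combinatorial factor.

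For the derivative part I would combine the $d_{\tilde A}\phi$-terms generated by the substitution with the explicit term $\sin\theta\,\Tr\big(\nabla^{\tilde A}_2\phi_1\,\phi_3 - \nabla^{\tilde A}_3\phi_1\,\phi_2\big)\mu_{W^4}$ already present in~\eqref{eq:vanishings-kw-asymptotics}, apply the Leibniz rule to the trace, and use the moment-map equation $d_{\tilde A}^\ast\phi = 0$ to eliminate the non-divergence remainder. What survives is a total exterior derivative, which — re-expressing the contractions back in the frame $\{v^\perp,w_i\}$ and recognizing $J^+B = \phi_1 e_1$, $J^-B = \phi_2 e_2 + \phi_3 e_3$ — is precisely $2\tfrac{\sin\theta}{\cos^2\theta}\,i_0^\ast d\,\Tr\big(\imath_w(J^+B)\wedge J^-B\big)$; the factor $\cos^{-2}\theta$ arises from solving the self-dual KW equation for $F_{\tilde A}^+$ against the $\cos\theta$-weight of the curvature term in~\eqref{eq:vanishings-kw-asymptotics}. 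For $\theta=\pi/2$ this coefficient is singular, so that case is treated separately by returning to~\eqref{eq:vanishings-kw-asymptotics}: there $v=w$, hence $\imath_w(J^+B)=\imath_v(J^+B)=0$ by Haydys self-duality of $B$, and only the $\sin\theta$-term remains. Since with a $\beta=\pi/2-\theta=0$ Nahm pole at the adjacent corner the limiting configuration satisfies the extended Bogomolny equations in the complementary directions of $W^4$, whose $4\mathcal{D}$-form gives $\nabla^{\tilde A}_y\phi_3=-[\phi_1,\phi_2]$, one gets $\Tr\big([\phi_1,\phi_2]\phi_3\big)\mu_{W^4}=-\tfrac12\nabla_y\Tr(\phi_3^2)\,\mu_{W^4}$, a manifest total derivative on $W^4$; this shows simultaneously that the surviving term equals $\tfrac23\,i_0^\ast\Tr(\sigma(B,B)\wedge B)$ and that it is exact.

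I expect the main obstacle to be the bookkeeping in the derivative part: pinning down the precise component form of the $\theta$-KW equations in the $w$-adapted orthonormal frame (the exact trigonometric arrangement on the self-dual and anti-self-dual projections and how they involve the $t$-component of $\phi$), tracking the $\cos\theta$/$\sin\theta$ weights faithfully through the substitution, and invoking $d_{\tilde A}^\ast\phi = 0$ at exactly the right step so that the non-exact remainder cancels and the coefficient emerges as $2\sin\theta/\cos^2\theta$. A secondary point needing care is the claim, at $\theta=\pi/2$, that a Kapustin-Witten solution carrying a Nahm pole at a corner genuinely reduces to an extended-Bogomolny configuration along the complementary directions, which is what makes the cubic term exact.
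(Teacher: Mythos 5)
Your overall route coincides with the paper's: start from \eqref{eq:vanishings-kw-asymptotics}, substitute the $\theta$-Kapustin-Witten equations in the form $F_{\tilde A} = \tfrac12[\phi\wedge\phi] - \cot\theta\, d_{\tilde A}\phi + \csc\theta\, \hodge_4 d_{\tilde A}\phi$, and watch the cubic and derivative pieces reassemble into $2\cos\theta\,\Tr([\phi_1,\phi_2]\phi_3)\,\mu_{W^4} + \sin\theta\,\Tr\big(\nabla^{\tilde A}_2(\phi_1\phi_3)-\nabla^{\tilde A}_3(\phi_1\phi_2)\big)\mu_{W^4}$, which is then recognized as the local form of the stated global expression. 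One bookkeeping warning on the cubic piece: it is not $\theta$-independent in components. The $\theta$-dependence of $\tfrac23\, i_0^\ast\Tr(\sigma(B,B)\wedge B)$ is hidden in the pullback, since $i_0^\ast\eta^\perp = \cos\theta\, w^\flat$ gives $i_0^\ast(e_i\wedge e_j)\propto \cos\theta\,\delta_{ij}\,\mu_{W^4}$; so the local cubic term carries an explicit overall factor $\cos\theta$, and your expectation that the $\cos^2\theta$- and $\sin^2\theta$-weighted contributions add up to something $\theta$-free would lead you astray when checking the identification.

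The genuine gap is your treatment of $\theta=\pi/2$ and of the exactness claim, which is precisely the part of the proposition used later in the proof of \autoref{thm:vanishings-decoupling}. Your resolution of the apparent singularity of $2\sin\theta/\cos^2\theta$ is the wrong one: $\imath_w(J^+B)=\cos\theta\,\phi_1\, w_1^\flat$, and the pullback of $J^-B$ contributes a further factor $\cos\theta$ through $\eta^\perp$, so $\cos^{-2}\theta\; i_0^\ast\Tr\big(\imath_w(J^+B)\wedge J^-B\big)$ has a finite, generically nonzero limit as $\theta\to\pi/2$ --- the term does not drop out. Consequently, at $\theta=\pi/2$ the surviving piece of \eqref{eq:vanishings-kw-asymptotics} is the derivative term $\Tr\big(\nabla^{\tilde A}_2(\phi_1\phi_3)-\nabla^{\tilde A}_3(\phi_1\phi_2)\big)\mu_{W^4}$, which is manifestly a divergence and hence exact, while the cubic term vanishes because of its explicit $\cos\theta$. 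Your detour --- identifying the surviving term with $\tfrac23\, i_0^\ast\Tr(\sigma(B,B)\wedge B)$ and deriving its exactness from $\nabla^{\tilde A}_y\phi_3=-[\phi_1,\phi_2]$ --- is mismatched (that global term pulls back to zero at $\theta=\pi/2$) and unjustified: the proposition assumes only that $(A,B)$ approaches a $\theta$-Kapustin-Witten solution, not a Nahm pole at an adjacent corner, and even under the later assumption (A4) the extended Bogomolny equations hold only as the leading order of the boundary expansion of \autoref{sec:vanishings-polyhomogeneous-expansion-of-nahm-pole-solutions}, not as an identity on all of $W^4$, so $\Tr([\phi_1,\phi_2]\phi_3)\,\mu_{W^4}=-\tfrac12\nabla_y\Tr(\phi_3^2)\,\mu_{W^4}$ is not available globally. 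The exactness statement needs no such input; it is read off directly from the component formula once the substitution and cancellations are carried out.
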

\begin{proof}
This is a rewriting of equation~\eqref{eq:vanishings-kw-asymptotics}.
Since $(\tilde A, \phi)$ satisfy the $\theta$-Kapustin-Witten equations, we can replace the field strength by $F_{\tilde A} = \frac{1}{2} [\phi\wedge\phi] - \cot\theta d_{\tilde A} \phi + \csc\theta \hodge_4 d_{\tilde A} \phi$.
After a short calculation (and slightly miraculous cancellations), we find
\begin{align}
	\lim_{s\to 0} i_s^\ast \chi = 2 \cos \theta \Tr( [\phi_1, \phi_2] \phi_3 ) \mu_{W^4} + \sin\theta\; \Tr(\nabla^A_2 (\phi_1\phi_3) - \nabla^A_3(\phi_1\phi_2) ) \mu_{W^4} \ ,
	\label{eq:vanishings-chi-npkw-boundary-in-components}
\end{align}
which is a local representation of the expression above and shows that the right hand side is exact if $\cos \theta = 0$
\end{proof}


\section{Vanishing of the Boundary Term}
\label{sec:vanishings-proof-main-result}

We can now show that the contributions from the exact term in the Weitzenböck formula of \autoref{eq:vanishings-weitzenböck} vanishes when the various conditions we have encountered in \autoref{sec:vanishings-polyhomogeneous-expansion-of-nahm-pole-solutions}~and~\autoref{sec:vanishings-asymptotics-of-chi} are satisfied.
In summary we make the following assumptions:
\begin{enumerate}[label=(A\arabic*)]
\item At $\del_{\mathrm{NP}} M^5$ the fields satisfy regular $\beta$-twisted Nahm pole boundary conditions for some generic~$\beta$.
Writing $A = y^{-1} \sin\beta \phi_\rho^\tau + \omega + a$ and $B= y^{-1} \cos\beta \phi_\rho + b$, assume that $F_\omega = 0$ and that $\nabla^\omega_s b_2 + \nabla^\omega_1 b_3 = \mathcal{O}(y^2)$ and $\nabla^\omega_s b_3 - \nabla^\omega_1 b_2 = \mathcal{O}(y^{2})$ (cf. \autoref{sec:vanishings-nahm-pole-boundaries}).

\item At $\del_{K} M^5$ the fields are asymptotic to knot singularity models and there is some $\delta>0$ such that $i^\ast_R \chi \sim C R^{-2+\delta} (\cos\psi)^{\delta} \mu_{\Sigma_K \times H^2_R}$ as $R\to 0$ (cf. \autoref{sec:vanishings-knot-singularity-boundary}).

\item At $\del_{\mathrm{KW}} M^5$ the fields approach a finite energy solution of the $\theta$-Kapustin-Witten equations.
The boundary face $\del_{\mathrm{KW}} M^5$ is either
\begin{enumerate*}
	\item an ALE or ALF gravitational instanton, 
	\item a compact manifold on which the bundle $E\to \del_{\mathrm{KW}} M^5$ has non-zero Pontryagin number, or 
	\item a compact manifold with incidence angle $\theta=\pi/2$
\end{enumerate*}
(cf. \autoref{sec:vanishings-nagy-oliveira-asymptotic-ends}).

\item At $\del_{\mathrm{NP-KW}} M^5$ the incidence angle is $\theta=\pi/2$ and the fields approach Kapustin-Witten solutions with Nahm poles at boundaries.
Moreover, at non-compact cylindrical ends of $\del_{\mathrm{NP-KW}}M^5$, the combination $A+i B$ converges to a flat $G_{\mathbb{C}}$ connection that satisfies $J^- \sigma(B,B) = 0$.
(cf. \autoref{sec:vanishings-nagy-oliveira-asymptotic-ends}).
\end{enumerate}

\begin{theorem} \label{thm:vanishings-decoupling}
Let $G=SU(2)$, $M^5$ a manifold with poly-cylindrical ends, $v$ a non-vanishing vector field that approaches ends at a constant angle, and $J$ an almost Hermitian structure on $\ker \eta$.
Assume $\HW[v](A,B)=0$ and that \textnormal{(A1)}~-~\textnormal{(A4)} are satisfied, then $\dHW[v,J](A,B)=0$.
\end{theorem}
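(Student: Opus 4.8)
The plan is to turn the Weitzenböck formula of \autoref{lem:vanishings-weitzenböck} into the statement by a compact-exhaustion argument. Integrating the pointwise identity underlying \eqref{eq:vanishings-weitzenböck} over the compact manifolds with corners $M_\epsilon$ and applying Stokes' theorem, the hypothesis $\HW[v](A,B)=0$ gives, for every $\epsilon>0$,
\begin{align}
	0 = \int_{M_\epsilon} \norm{\dHW[v,J](A,B)}^2 + \sum_{i\in I} \int_{\del_i M_\epsilon} \chi \ .
\end{align}
Since $M_\epsilon$ exhausts $M^5$ as $\epsilon\to 0$ and $\norm{\dHW[v,J](A,B)}^2\geq 0$, the first term is monotone and converges to $\int_{M^5}\norm{\dHW[v,J](A,B)}^2\in[0,\infty]$. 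Hence everything reduces to showing $\sum_{i}\int_{\del_i M_\epsilon}\chi\to 0$ as $\epsilon\to 0$; granting this, taking limits in the display yields $\int_{M^5}\norm{\dHW[v,J](A,B)}^2=0$, and by continuity of the integrand $\dHW[v,J](A,B)\equiv 0$, which is the assertion.

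To control the boundary sum, I would split the faces $\del_i M_\epsilon$ into the four classes of \autoref{sec:vanishings-setting} and feed in the asymptotics of $\chi$ from \autoref{sec:vanishings-asymptotics-of-chi}. On a Nahm pole face, assumption \textnormal{(A1)} is exactly what puts us in the scope of \autoref{prop:vanishings-chi-np-asymptotics}, so $i_y^\ast\chi=\mathcal{O}(y^\delta)\mu_{W^4}$ for some $\delta>0$; since $\del_{\mathrm{NP}}M_\epsilon$ lies at $y\sim\epsilon$ and its induced volume grows only sub-exponentially in $\epsilon^{-1}$, the contribution is $\mathcal{O}(\epsilon^\delta)$ times a slowly growing factor and tends to $0$. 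On a knot face, assumption \textnormal{(A2)} directly gives $i_R^\ast\chi\sim C R^{-2+\delta}(\cos\psi)^\delta\,\mu_{\Sigma_K\times H^2_R}$, and since the volume form of the radius-$R$ hemisphere carries a factor $R^2$ the contribution is $\mathcal{O}(\epsilon^\delta)$, with the $(\cos\psi)^\delta$ factor suppressing the region near the corner $\del_K M$ shares with $\del_{\mathrm{NP}}M$. On a finite-energy Kapustin--Witten end, \textnormal{(A3)} brings in \autoref{prop:vanishings-chi-kw-asymptotics}, so $i_s^\ast\chi\to 0$ pointwise as $s\to 0$; combined with a dominated-convergence estimate over the truncated cross-section $\del_{\mathrm{KW}}M_\epsilon$ this kills the contribution (it vanishes identically in the ALE/ALF case, where $\tilde A$ is flat and $\phi$ is $\nabla^{\tilde A}$-parallel). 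Finally, on a Kapustin--Witten end with Nahm poles, \textnormal{(A4)} fixes $\theta=\pi/2$, so \autoref{prop:vanishings-chi-npkw-asymptotics} shows $\lim_{s\to0}i_s^\ast\chi$ is an exact form on $\del_{\mathrm{NP-KW}}M$; one then integrates by parts once more on the face, reducing the contribution to integrals of a primitive over the adjacent corners of $M^5$, which vanish because at the Nahm pole corner $\beta=\pi/2-\theta=0$ the primitive decays by the same leading-order cancellations as in the Nahm pole and knot cases, while along the corners at infinity shared with other Kapustin--Witten ends the flat-connection and $J^-\sigma(B,B)=0$ conditions of \textnormal{(A4)} force the primitive to vanish. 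Summing the four estimates gives $\sum_i\int_{\del_i M_\epsilon}\chi\to 0$, completing the argument.

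The main obstacle is the bookkeeping at the corner strata rather than any single face estimate. The propositions of \autoref{sec:vanishings-asymptotics-of-chi} describe $\chi$ near one open boundary face, whereas $\del M_\epsilon$ is assembled from truncated faces meeting along codimension-two corners of $M^5$ (some at finite distance, some at infinity), and one must check both that the decay of $\chi$ beats the volume growth of these truncations and --- in the exact-form case \textnormal{(A4)} --- that the secondary boundary integrals over the corners genuinely vanish. This is precisely where the assumptions are engineered to interlock: the $(\cos\psi)^\delta$ refinement of \textnormal{(A2)} is matched to the $y^\delta$ decay of \textnormal{(A1)} across the Nahm-pole--knot corner, and the $\theta=\pi/2$ and flatness requirements in \textnormal{(A4)} are exactly what makes the Nahm-pole--Kapustin-Witten contributions collapse to vanishing corner terms. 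The hypothesis $G=SU(2)$ enters only through the inputs already absorbed into these assumptions, namely the Nagy--Oliveira-type vanishing theorem behind \autoref{prop:vanishings-chi-kw-asymptotics} and the knot singularity model solutions behind \textnormal{(A2)}.
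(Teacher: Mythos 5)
Your overall strategy is the paper's: regularize over the compact exhaustion $M_\epsilon$, apply Stokes on manifolds with corners, and kill the four classes of boundary integrals using (A1)--(A4) together with \autoref{prop:vanishings-chi-np-asymptotics}, the knot asymptotics, \autoref{prop:vanishings-chi-kw-asymptotics} and \autoref{prop:vanishings-chi-npkw-asymptotics}; the Nahm pole, knot, and finite-energy Kapustin--Witten faces are handled essentially as in the paper (there the volume-versus-decay balance at the KW end is settled by exponential convergence of the flow to the stationary solution rather than by dominated convergence, but that is a matter of packaging).

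The genuine gap is in your treatment of the $\del_{\mathrm{NP-KW}}$ faces. After \autoref{prop:vanishings-chi-npkw-asymptotics} gives $\lim_{s\to 0} i_s^\ast\chi = d\omega$ with $\omega = \Tr\left(\imath_w(J^+B)\wedge J^-B\right)$, you propose to integrate by parts a second time on the face $W^4$ and argue that the resulting corner integrals vanish, at the Nahm-pole corner ``by the same leading-order cancellations.'' That last claim is not justified: applying \autoref{lem:vanishings-expansion-of-nahm-pole-solutions} with $\beta=\pi/2-\theta=0$, the primitive has the expansion $\omega = y^{-2}\Tr(\mathfrak{t}_1\mathfrak{t}_2+\mathfrak{t}_1\mathfrak{t}_3) + y^{0}\Tr\big(\mathfrak{t}_1 (C^{1,+})_2 + \mathfrak{t}_1 (C^{1,+})_3 + (C^{1,+})_1\mathfrak{t}_2 + (C^{1,+})_1\mathfrak{t}_3\big) + \mathcal{O}(y^\delta)$, so only the singular $y^{-2}$ part cancels (by $\Tr(\mathfrak{t}_1\mathfrak{t}_2)=0$); the constant-order term involving the free coefficient $C^{1,+}$ does not decay as $y\to 0$, and its integral over the corner slice has no reason to vanish. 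Moreover, $W^4$ is a non-compact manifold with boundary, so your second Stokes application itself requires a decay-versus-volume-growth justification of exactly the kind you are trying to establish. The paper circumvents both issues by invoking Yau's theorem: if $\liminf_{r\to\infty} r^{-1}\int_{B_r(p)}\abs{\omega}\,\mu_{B_r(p)} = 0$ then $\int_{W^4} d\omega = 0$; for this criterion it suffices that the $y^{-2}$ term cancels and the constant-order term is merely \emph{integrable} near the Nahm pole, while at the ends of $W^4$ the flatness and $J^-\sigma(B,B)=0$ conditions of (A4) make $\omega\to 0$ (this part of your argument matches the paper). To close your version you would either need to prove that the corner integral of the $y^{0}$ term vanishes -- which is not available from the polyhomogeneous expansion -- or replace the second integration by parts with a sub-linear-growth argument of Yau type as the paper does.
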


\begin{proof}
Our starting point is a regularized version of the Weitzenböck formula of \autoref{lem:vanishings-weitzenböck}, obtained by restricting the domain of integration to the compact submanifold with corner $M^5_\epsilon$ introduced in \autoref{sec:vanishings-setting} and taking $\epsilon\to 0$.
\begin{align}
	\int_{M^5} \norm{\HW[v](A,B)}^2 = \int_{M^5} \norm{\dHW[v,J](A,B)}^2 + \lim_{\epsilon \to 0} \int_{M^5_\epsilon} d \chi
\end{align}
According to Stokes' theorem, the contributions of the exact term are now determined by 
\begin{align}
	\lim_{\epsilon \to 0} \int_{M_\epsilon} d \chi = \sum_{i \in I} \lim_{\epsilon \to 0} \int_{\del_i M_\epsilon} \chi \ .
\end{align}
We address the integrals for each of the four boundary classes independently.

\paragraph{Nahm Pole Boundaries:}
Let $W^4 \times [0,1)_y$ be a tubular neighbourhood of a Nahm pole boundary $\del_{\text{NP}} M^5$.
The boundary face $\del_{NP}M_\epsilon$ is a subset of the $\epsilon$-displacement $\{\epsilon\}\times W^4 \hookrightarrow [0,1)_s \times W^4$.
We know from \autoref{prop:vanishings-chi-np-asymptotics} that there is some $\delta > 0$ such that $i^\ast_y \chi \sim C y^\delta \mu_{W^4}$ as $y \to 0$.
This provides the estimates
\begin{align}
	\lim_{\epsilon \to 0} \abs{ \int_{\del_{\text{NP}} M_\epsilon} i_\epsilon^\ast  \chi }
	\leq \lim_{\epsilon \to 0} \int_{\del_{\text{NP}} M_\epsilon} \abs{ i_\epsilon^\ast \chi }
	\leq \lim_{\epsilon \to 0} \int_{W^4 \times \{\epsilon\}} \abs{C} \epsilon^{\delta} \mu_{W^4}
	= 0
\end{align}
We have used that the leading order of the pullback of $\chi$ extends unchanged to all of $W^4$, due to compatibility of boundary conditions at corners.

\paragraph{Knot Boundaries:}
Let $\Sigma_K \times H^2_{\psi,\vartheta} \times [0,1)_R$ be a tubular neighbourhood of a knot boundary $\del_K M^5$, where $H^2_{\psi,\vartheta}$ denotes the two-dimensional hemisphere, parametrized by $\psi \in [0,\pi/2]$ and $\vartheta \in [0,2\pi]$.
The associated boundary of the regularized manifold $\del_K M^5_\epsilon$ is contained in the $\epsilon$-displacement $\Sigma_K \times H^2 \times \{\epsilon\} = \Sigma_K \times H^2_\epsilon$.
The pullback of the volume form to the hemisphere of radius $R=\epsilon$ is given by $\mu_{\Sigma_K \times H^2_\epsilon} = \epsilon^2 d\psi d\vartheta \mu_{\Sigma_K}$.
Assuming explicitly that $i_R^\ast \chi \sim c R^{-2+\delta} (\cos\psi)^{\delta} \mu_{\Sigma_K \times H^2_R}$ as $R\to 0$, we find
\begin{align}
	\lim_{\epsilon \to 0} \abs{\int_{\del_{K} M^5_\epsilon } i_\epsilon^\ast  \chi}
	\leq \lim_{\epsilon \to 0} \int_{\Sigma_K\times H^2 \times \{\epsilon\}} \abs{i_\epsilon^\ast \chi}
	\leq \lim_{\epsilon \to 0} \int_{\Sigma_K} \int_{H^2} \abs{C} \epsilon^{-2+\delta} \epsilon^2 d\psi d\vartheta \mu_{\Sigma_K}
	= 0
\end{align}
where we have used the Fubini-Tonelli theorem to split off integration along $\Sigma_K$.
In extending the integral from $\del_K M_\epsilon$ to all of $\Sigma_K \times H \times \{\epsilon\}$, we have used the compatibility of knot singularities with the pure Nahm pole boundary conditions (which are the only boundary conditions that are adjacent to knot singularities) at the corner $\cos\psi\to 0$.

\paragraph{Kapustin-Witten Ends:}
Let $[0,1)_s\times W^4$ be a tubular neighbourhood of a Kapustin-Witten end $\del_{\text{KW}} M^5$.
\autoref{prop:vanishings-chi-kw-asymptotics} states that $\lim_{s\to 0} i_s^\ast \chi = 0$.
Since $\del_{\mathrm{KW}} M_\epsilon \subset W^4$ and $W^4$ is ALE or ALF, its volume grows asymptotically at most with $\epsilon^{-4}$.
Looking back at~\eqref{eq:vanishings-kw-asymptotics}, the rate of decay of $\chi$ is determined by how fast $F_A$ and $\nabla^A \phi$ approach zero as $s\to 0$.
Since the Haydys-Witten equations represent flow equations of the Kapustin-Witten equations, a typical solution is expected to decay exponentially fast towards the stationary solution.
We conclude that in the limit $\epsilon \to 0$:
\begin{align}
	\lim_{\epsilon\to 0} \abs{\int_{\del_{\text{KW}} M^5_\epsilon} \chi }
	\leq \lim_{\epsilon \to 0} \int_{\del_{\text{KW}} M^5_\epsilon} \abs{i_\epsilon^\ast \chi }
	= 0
\end{align}

\paragraph{Kapustin-Witten Ends with Nahm Poles:}
Let $[0,1)_s\times W^4$ be a non-compact end of $M^5$ and assume that $g(\del_s, v) = 0$, i.e. $\theta = \pi/2$.
As before, denote by $i_s : W^4 \hookrightarrow [0,1)_s \times W^4$ inclusion of $W^4$ at $s$.

Assume $(A,B)$ converges to a Kapustin-Witten solution $(\hat A,\phi)$ that exhibits a Nahm pole at a boundary of $W^4$.
Since $\theta=\pi/2$, \autoref{prop:vanishings-chi-npkw-asymptotics} states that $\lim_{s\to 0} i_s^\ast \chi = d \omega$, where $\omega = \Tr\left( \imath_w (J^+ B) \wedge J^- B \right)$.
It follows that
\begin{align}
	\lim_{s\to 0} \int_{\del_{\text{NP-KW}} M_\epsilon} i_s^\ast \chi = \int_{W^4} d\omega
\end{align}
and it remains to determine the integral of $d\omega$ over $W^4$.

For this, let $B_r(p)$ denote the (four-dimensional) ball of radius $r$ centered at some point $p \in W^4$ and $\mu_{B_r(p)}$ its volume form.
A classic result by Yau states \cite[Theorem 3 \& Appendix (ii)]{Yau1976}:
If $\liminf_{r\to \infty} r^{-1} \int_{B_r(p)} \abs{\omega} \mu_{B_r(p)} = 0$, then $\int_{W^4} d \omega = 0$.

Near the boundary we can rely on \autoref{lem:vanishings-expansion-of-nahm-pole-solutions}.
Applied to the Kapustin-Witten solution $(\tilde A,\phi)$ with $\beta = \pi/2 - \theta = 0$, this yields
\begin{align}
	\omega = y^{-2} \Tr(\mathfrak{t}_1 \mathfrak{t}_2 + \mathfrak{t}_1 \mathfrak{t}_3) + y^{0} \Tr( \mathfrak{t}_1 (C^{1,+})_2 + \mathfrak{t}_1 (C^{1,+})_3 + (C^{1,+})_1 \mathfrak{t}_2 + (C^{1,+})_1 \mathfrak{t}_3) + \mathcal{O}(y^\delta) \qquad (y\to 0)
\end{align}
The term proportional to $y^{-2}$ vanishes and the term at constant order is assumed to be integrable on $X^3$, such that contributions from the Nahm pole to the integral $\int_{B_r(p)} \abs{\omega}$ are harmless.

At a non-compact end $[0,1)_s \times X^3$ of $W^4$, the asymptotic boundary condition states that $A + iB$ approaches a flat $SL(2,\C)$-connection on $X^3$ that satisfies $J^- \sigma(B,B) = 0$.
Equivalently, the component $\phi_1$ of $J^+ B$ commutes with the components $\phi_{2/3}$ of $J^- B$.
This is for example the case if $X^3$ is a product $S^1 \times \Sigma$, where $\Sigma$ is a Riemann surface and the almost complex structure $J$ is the direct sum of complex structures on the cylinder $[0,1)_s \times S^1$ and $\Sigma$.
In any case, since $[\phi_1,\phi_2] = [\phi_1,\phi_3] = 0$ we find that $\omega = \Tr(\phi_1 \phi_2) + \Tr(\phi_1\phi_3) \to 0$.
Assuming $A+ iB$ converges to the flat connection faster than the volume of the geodesic balls $B_r(p)$ grows, we conclude that $\int_{W^4} d\omega = 0$.

\paragraph{Conclusion:}
Since all boundary contributions to the exact term vanish in the limit $\epsilon \to 0$, we arrive at
\begin{align}
	\int_{M^5} \norm{\HW[v](A,B)}^2 = \int_{M^5} \norm{\dHW[v,J](A,B)}^2 + \lim_{\epsilon \to 0} \int_{M^5_\epsilon} d \chi = \int_{M^5} \norm{\dHW[v,J](A,B)}^2
\end{align}
Seeing that the integrands on both sides are non-negative, we find that whenever $\HW(A,B) = 0$ also $\dHW(A,B) = 0$, which concludes the proof.

\end{proof}

\printbibliography[heading=bibintoc]

\end{document}